\DeclarePairedDelimiter\floor{\lfloor}{\rfloor}
\theoremstyle{plain}
\newtheorem{theorem}{Theorem}[section]
\newtheorem{corollary}[theorem]{Corollary}
\newtheorem{lemma}[theorem]{Lemma}
\newtheorem{remark}[theorem]{Remark}
\newtheorem{assumption}{Assumption}
\newcommand{\oU}{\overline{U}}
\newcommand{\R}{\mathbb{R}}
\newcommand{\K}{\mathcal{K}}
\newcommand{\N}{\mathbb{N}}
\newcommand{\cL}{\mathcal{L}}
\newcommand{\bydef}{\stackrel{\mbox{\tiny\textnormal{\raisebox{0ex}[0ex][0ex]{def}}}}{=}}
\title{
Recent advances about the rigorous integration of parabolic PDEs via fully spectral Fourier-Chebyshev expansions
}
\author{
Matthieu Cadiot
\footnote{McGill University, Department of Mathematics and Statistics, 805 Sherbrooke Street West, Montreal, QC, H3A 0A9, Canada. {\tt matthieu.cadiot@mail.mcgill.ca}}
\and
Jean-Philippe Lessard \footnote{McGill University, Department of Mathematics and Statistics, 805 Sherbrooke Street West, Montreal, QC, H3A 0A9, Canada. {\tt jp.lessard@mcgill.ca}}
}
\date{}
\begin{document}

\maketitle

\begin{abstract}
This paper presents a novel approach to rigorously solving initial value problems for semilinear parabolic partial differential equations (PDEs) using fully spectral Fourier-Chebyshev expansions. By reformulating the PDE as a system of nonlinear ordinary differential equations and leveraging Chebyshev series in time, we reduce the problem to a zero-finding task for Fourier-Chebyshev coefficients. A key theoretical contribution is the derivation of an explicit decay estimate for the inverse of the linear part of the PDE, enabling larger time steps. This allows the construction of an approximate inverse for the Fréchet derivative and the application of a Newton-Kantorovich theorem to establish solution existence within explicit error bounds. Building on prior work, our method is extended to more complex partial differential equations, including the 2D Navier-Stokes equations, for which we establish global existence of the solution of the IVP for a given nontrivial initial condition.
\end{abstract}

\begin{center}
{\bf \small Key words.} 
{ \small Cauchy problems, Spectral methods, Newton-Kantorovich, 2D Navier-Stokes equations}
\end{center}

\begin{center}
{\bf \small Mathematics Subject Classification (2020)}  \\ \vspace{.05cm}
{\small 65M99 $\cdot$ 65G20 $\cdot$ 65M70 $\cdot$ 65Y20 $\cdot$ 35Q30} 
\end{center}

\section{Introduction}

The study of the time evolution of solutions to semilinear parabolic partial differential equations (PDEs) is a central topic in applied mathematics, with numerous applications in fields such as fluid mechanics, materials science, pattern formation, population dynamics, and autocatalytic chemical reactions, among others. A key challenge in this area is the development of rigorous methods for solving initial value (Cauchy) problems. This is a particularly difficult task due to the presence of nonlinearities and the infinite-dimensional nature of the phase space in which these PDEs are defined. In this paper, we introduce a novel computer-assisted methodology, improving significantly the fully spectral approach \cite{jacek_integration_cheb} to tackle this problem. Our method is part of a broader collaborative effort aimed at advancing rigorous computational techniques to study the flow of dissipative PDEs. This global initiative is motivated by the fact that the infinite-dimensional dynamics of parabolic PDEs represent a fundamental problem in mathematics, as exemplified by the investigation of global existence and behavior of solutions in Cauchy problems for the Navier-Stokes equations. Recent years have seen significant progress in the field of computer-assisted proofs in nonlinear analysis, with the development of various approaches in this direction. These include topological methods based on covering relations and self-consistent bounds \cite{MR2049869,MR2788972,chaos_KS,MR3167726,MR3773757,Cy,CyZ}, the $C^1$ rigorous integrator approach from \cite{MR2728184}, the semi-group methodology of \cite{MR3639578,MR3683781,TLJO,JLT_Schrodinger,MR4356641,integrator_gabriel}, the finite element discretization methods \cite{nakao1,MR4015319,nakao3}, as well as the Chebyshev interpolation and domain decomposition strategy from \cite{MR4777935}, highlighting the rich diversity of strategies employed in this field.

The focus of the present paper is on studying the following class of initial value problems
\begin{equation}\label{eq : initial pde_intro} 
\partial_t u = \mathbb{L} u + \mathbb{Q}(u) + \phi, \quad
    u(0,x) = b(x),
\end{equation}
where $m \in \N$, $x = (x_1, \dots, x_m) \in \Omega \bydef (-\pi,\pi)^m$, and where $u = u(t,x)$, $\phi = \phi(t,x)$, and $b$ are functions that are $2\pi$-periodic in $x$. Under the conditions specified in Assumptions~\ref{ass : assumption on L} and \ref{ass : assumption on Q}, we assume that \( \mathbb{L} \) is a spatial elliptic differential operator, and that the PDE is parabolic. Following the set-up of \cite{jacek_integration_cheb}, our method is structured as follows: we express the solution of the PDE \eqref{eq : initial pde_intro} as a Fourier series in space, reducing the problem to an infinite system of nonlinear ordinary differential equations (ODEs) over a finite time interval $[0, h]$, for some $h > 0$. Using the Fourier coefficients $\beta$ of the initial condition, we reformulate the ODEs as Picard integral equations and expand the solution as a Chebyshev series in time. This process leads to an equivalent zero-finding problem of the form $F(U) =  U + \frac{h}{2} \mathcal{L}^{-1} \left( \mathbf{\Lambda} (Q(U) + \Phi) - \beta \right)$, where $U$ represents an infinite two-index sequence of Fourier-Chebyshev coefficients corresponding to the solution of the Cauchy problem, $\cL$ and $\mathbf{\Lambda}$ are linear operators, and $Q(U)$, $\Phi$ and $\beta$ represent the Fourier-Chebyshev representation of the terms $\mathbb{Q}(u)$, $\phi$ and $b$ in \eqref{eq : initial pde_intro}, respectively. After projecting the problem onto a finite-dimensional subspace, we apply Newton's method to compute a numerical approximation $\oU$ of $F=0$. The idea is then to invoke a Newton-Kantorovich type theorem to prove that $F$ has a true zero close to $\oU$, which relies on the fundamental problem of this paper, namely to construct a good enough approximate inverse for the Fréchet derivative $DF(\oU)$. This construction begins by proving the invertibility of the operator $\cL$ on a suitable Banach space and to derive explicit and quantitative control on its inverse. An important feature of the operator $\cL$ is that it can be viewed as a block diagonal operator $\cL = (\cL_k)_{k}$ where $k$ is the (space) Fourier order and where $\cL_k$ is an operator acting on an infinite sequence of Chebyshev (time) coefficients. Moreover, $\cL_k$ consists of the sum of an infinite-dimensional tridiagonal operator and a rank one operator, and becomes less and less diagonal dominant as $k$ goes to infinity, a feature complicating significantly the analysis. It is important to note that tridiagonal operators frequently appear in computer-assisted proofs involving differential equations, as demonstrated in \cite{cyranka_mucha}, which examines elliptic systems, and \cite{tridiagonal}, which analyzes non-autonomous differential operators that result in tridiagonal operators with unbounded off-diagonal entries.

In studying the operator $\cL$ and proving its invertibility, we prove a novel result, namely that there exists an explicit constant $C$ such that 
\begin{equation} \label{eq:new_tail_estimate}
\| \cL_k^{-1} \Lambda \| \le \frac{C}{2+\mu_k},
\end{equation}
where $\mu_k = -\frac{h}{2} \lambda_k$ with $\lambda_k \to -\infty$ being the eigenvalues of the spatial differential operator $\mathbb{L}$ in \eqref{eq : initial pde_intro} (see Assumption~\ref{ass : assumption on L}). Note that $\Lambda$ consists of the constant blocks diagonal operators appearing in $\mathbf{\Lambda}$, that is $\mathbf{\Lambda} = (\Lambda)_k$,
where $k$ is the (space) Fourier order and where $\Lambda$ is an operator acting on an infinite sequence of Chebyshev (time) coefficients. The central estimate \eqref{eq:new_tail_estimate} is a novel result that offers valuable insights into fully spectral methods for solving Cauchy problems in PDEs, and it represents one of the primary theoretical contributions of this paper. As we will demonstrate, the ability to control this decay has two major advantages. First, it establishes that the operator $\mathcal{L}^{-1} \mathbf{\Lambda}$ is compact, allowing us to conclude that the Fréchet derivative $DF(U)=I_d + \frac{h}{2} \mathcal{L}^{-1} \mathbf{\Lambda} DQ(U)$ is a compact perturbation of the identity. This property is particularly valuable for computations, as it enables the Fréchet derivative to be approximated using large finite-dimensional matrices. Second, this control over the decay permits the use of larger time steps in numerical integration, enhancing computational efficiency.

Utilizing the newly discovered decay property \eqref{eq:new_tail_estimate} of the inverse of the differential operator in the Fourier tail, we construct an approximate inverse $A$ for $DF(\oU)$. By invoking the above mentioned Newton-Kantorovich theorem, we demonstrate that the operator $T(U) \bydef U - A F(U)$ is a contraction on a closed ball $B_r(\oU)$ of radius $r > 0$, centered at the numerical approximation $\oU$. Applying the contraction mapping theorem yields a unique solution to $F = 0$ within $B_r(\oU)$, which corresponds to the solution of the Cauchy problem over an explicit time interval. The explicit radius $r > 0$ provides a rigorous $C^0$-error bound between the true solution of the Cauchy problem \eqref{eq : initial pde_intro} and its numerical approximation.

While the approach of the present paper is strongly influenced by the previous work \cite{jacek_integration_cheb}, it offers two significant advances. The first one is that this new approach  enables the use of larger time steps in our computations, thanks to the theoretical advancement \eqref{eq:new_tail_estimate} and to its effect on the construction of a good enough approximate inverse $A$. Furthermore, the work in \cite{jacek_integration_cheb} acknowledged certain limitations of the approach, particularly in its applicability to PDEs with derivatives in the nonlinearity. In \cite{jacek_integration_cheb}, it is speculated--based on numerical experiments--that the method could be extended to models such as the Kuramoto-Sivashinsky equation and the phase-field crystal (PFC) equation, where the derivative order of the nonlinearity is lower than that of the linear part. However, the authors did not anticipate how to adapt the method to more complex models, including Burgers' equation, the Cahn-Hilliard equation, or the Ohta-Kawasaki model. In this paper, we overcome these limitations, applying our method to these more general classes of models. 

This paper is organized as follows. In Section \ref{sec:formulation}, we present the set-up of the methodology, introducing the required notations for our analysis. In particular, assuming some properties of $\mathcal{L}^{-1}\mathbf{\Lambda}$ (aka \eqref{eq : ineq C0 C1n CA0n}), we demonstrate that our approach allows the development of a Newton-Kantorovish argument for proving existence of solutions to \eqref{eq : initial pde}. In Section \ref{sec : computation of C0 C1}, we verify the aforementioned properties of  $\mathcal{L}^{-1}\mathbf{\Lambda}$  by establishing an explicit control of $\mathcal{L}^{-1}$. Finally, we illustrate our approach in Section \ref{sec : applications} with IVPs in the 2D Navier-Stokes equations, in the Swift-Hohenberg PDE and in the Kuramoto-Sivashinski PDE. Notably, we establish global-in-time existence for a specific IVP in the 2D Navier-Stokes equations by explicitly constructing a trapping region that confines trajectories to the zero solution. All computer-assisted proofs, including the requisite codes, are accessible on GitHub at \cite{julia_cadiot}.

\section{Set-up, Newton-Kantorovich bounds and Regularity} \label{sec:formulation}

In this section, we first formulate the zero-finding problem of the form $F(U) = 0$ in a Banach space of Fourier-Chebyshev coefficients, where the solution $U$ corresponds to that of the initial value problem. In Section~\ref{ssec : Newton Kantorovich}, we introduce the Newton-Kantorovich theorem, which we employ to establish computer-assisted proofs of the existence of a solution to $F = 0$. Section~\ref{sec : computation of the bounds} then presents the analytical framework necessary for computing the bounds appearing in Theorem~\ref{th : radii polynomial}. Finally, in Section~\ref{sec:regularity}, we demonstrate that solving $F = 0$ yields strong solutions to the partial differential equation (that is, they possess sufficient regularity in both space and time to satisfy the original Cauchy problem.).

Given $m \in \mathbb{N}$, we study the following class of initial value problems 
\begin{equation}\label{eq : initial pde}
    \partial_t u = \mathbb{L} u + \mathbb{Q}(u) + \phi, \quad
    u(0,x) = b(x), 
\end{equation}
where $x=(x_1,\dots,x_m) \in \Omega \bydef (-\pi,\pi)^m$ and where $u = u(t,x)$, $\phi = \phi(t,x)$ and $b$ are given functions which are spatially  $2\pi$-periodic in the variable $x$. In particular, we assume that 
\begin{align}
    \phi(t,x) = \sum_{k \in \mathbb{Z}^m}\phi_k(t) e^{ik\cdot x} ~~ \text{ and } ~~ b(x) = \sum_{k \in \mathbb{Z}^m} b_k e^{ik\cdot x}.
\end{align}
In addition, we look for a solution $u$ of \eqref{eq : initial pde} of the form
\begin{equation}\label{eq : periodic ansatz in space}
    u(t,x) = \sum_{k \in \mathbb{Z}^m} u_k(t) e^{ik \cdot x},
\end{equation}
that is $u$ is periodic in space. Now, we complement  \eqref{eq : initial pde} with assumptions on the operators $\mathbb{L}$ and $\mathbb{Q}$. First, we assume that $\mathbb{L}$ is a spatial elliptic differential operator satisfying the following property.
\begin{assumption}\label{ass : assumption on L}
    For all $k \in \mathbb{Z}^m$, there exists $\lambda_k \in \R$ such that
    \begin{align*}
    \mathbb{L} e^{i k\cdot x} = \lambda_k e^{i k\cdot x}
\end{align*}
for all $x \in \Omega$. Moreover, assume that $\lambda_k \to -\infty$ as $|k| \to \infty$ and denote
\begin{align}\label{eq : positivity condition of nuk}
   \lambda_{\sup} \bydef \sup_{k \in \mathbb{Z}^m} \lambda_k <  \infty.
\end{align}
\end{assumption}

Then, we assume that $\mathbb{Q}$ is an autonomous polynomial non-linearity satisfying the following.
\begin{assumption}\label{ass : assumption on Q}
    There exists $p \in \mathbb{N}$ such that 
\begin{align*}
    \mathbb{Q}(u) =  \mathbb{Q}_1 (u) \cdots \mathbb{Q}_p(u)
\end{align*}
where $\mathbb{Q}_j$ ($j \in \{1,\dots, p\}$) is a linear  operator such that
\begin{align*}
    \mathbb{Q}_j(e^{i k\cdot x}) = q_{j,k} e^{i k\cdot x}
\end{align*}
for all $x \in \Omega$ and all $k \in \mathbb{Z}^m$. Moreover, we assume that \eqref{eq : initial pde} is semi-linear, that is, for each $j \in \{1,\dots, p\}$
\begin{align}\label{eq : assumption on non linear term}
    \frac{q_{j,k}}{\lambda_k} \to 0~~ \text{ as } |k| \to \infty.
\end{align}
\end{assumption}

\begin{remark}\label{rem : general non linear term}
In the cases for which $\mathbb{Q}$ is a general polynomial non-linearity, then $\mathbb{Q}$ can be written as a linear combination of operators of the form $\mathbb{Q}_1 (u) \cdots \mathbb{Q}_p(u)$. Then, the analysis of this paper applies by linearity. Note that, more generally, one could also consider analytic non-linear operators. We choose to present the method for a general non-linear term $\mathbb{Q}_1 (u) \cdots \mathbb{Q}_p(u)$ for simplicity. 
\end{remark}

Using the above assumption and substituting the ansatz \eqref{eq : periodic ansatz in space} in \eqref{eq : initial pde}, we obtain that
\begin{equation}\label{eq : pde in Fourier series}
    \frac{d u_k}{dt} = \lambda_ku_k + \left(\mathbb{Q}(u)\right)_k + \phi_k, \quad
    u_k(0) = b_k
\end{equation}
for all $k \in \mathbb{Z}^m$, where $\left(\mathbb{Q}(u)\right)_k$ is the $k$-th Fourier coefficient of $\mathbb{Q}(u).$ In particular, using Assumption~\ref{ass : assumption on Q}, we can use the discrete convolution to express $\left(\mathbb{Q}(u)\right)_k$. Indeed, we have
\begin{align}
\left(\mathbb{Q}(u)\right)_k = \sum_{k_1 + \dots + k_p = k} q_{1,k_1}u_{k_1}\dots q_{p,k_p}u_{k_p}    
\end{align}
for all $k \in \mathbb{Z}^m.$ 
Now, as presented in \cite{jacek_integration_cheb}, we fix a {\em time step} $h>0$ and look for a solution to \eqref{eq : initial pde} on $[0,h] \times \Omega$.
First, using the rescaling $t \mapsto \frac{2t}{h}-1$, we equivalently look for a solution to \eqref{eq : pde in Fourier series} as 
\begin{equation}\label{eq : pde in Fourier series with rescaling}
    \frac{du_k}{dt} =  \frac{h}{2}f_k(u), \quad
      u_k(-1) =   b_k
\end{equation}
for all $t \in [-1,1]$, where 
\[
f_k(u) \bydef  \lambda_k u_k + \left(\mathbb{Q}(u)\right)_k + \phi_k.
\]
Moreover, we expand each $u_k=u_k(t)$ as a Chebyshev series of the first kind, that is 
\[
u_k(t) = U_{k,0} +  2\sum_{n \in \mathbb{N}} U_{k,n}T_n(t),
\]
where $T_n$ ($n \ge 0$) are the Chebyshev polynomials of the first kind.
Let $\mathbb{N}_0 \bydef \mathbb{N} \cup \{0\}$ and denote $U \bydef (U_{k,n})_{(k,n) \in \mathbb{Z}^m \times \mathbb{N}_0}$. Moreover, for all $k \in \mathbb{Z}^m$, denote $U_k \bydef (U_{k,n})_{n \in \mathbb{N}_0}$. In other terms, we look for a solution $u$ of the form
\begin{equation}\label{eq : expansion solution}
u(t,x) =  \sum_{k \in \mathbb{Z}^m} U_{k,0}  e^{ik\cdot x} + 2\sum_{n \in \mathbb{N}} \sum_{k \in \mathbb{Z}^m} U_{k,n} T_n\left(t\right) e^{ik\cdot x}.
\end{equation}
Then, define $(\mu_k)_{k \in \mathbb{Z}^m}$ as 
\begin{equation}\label{eq : translation spectrum}
     \mu_k \bydef -\frac{h}{2}(\lambda_k - \lambda_{\sup})
\end{equation}
for all $k \in  \mathbb{Z}^m$, where $\lambda_{\sup}$ is given in \eqref{eq : positivity condition of nuk}. In particular, 
\begin{equation}\label{eq : positivity of muk}
    \mu_k \geq  0 \text{ for all } k \in \mathbb{Z}^m.
\end{equation}
This fact will notably be useful for the inversion of the linear operator  $\mathcal{L}$ defined below.
Letting $\phi_k(t) = \Phi_{k,0} +  2\sum_{n \in \mathbb{N}} \Phi_{k,n}T_n(t)$ and using \cite{jacek_integration_cheb}, we obtain that $U = (U_{k,n})$ solves
\begin{equation}\label{eq : pde with Fourier and cheb pointwise}
\begin{aligned}
\mu_kU_{k,n-1} +  2n U_{k,n} - \mu_kU_{k,n+1} + \frac{h}{2} \left(Q_{k,n+1}(U) - Q_{k,n-1}(U)\right) + \frac{h}{2}(\Phi_{k,n+1}-\Phi_{k,n+1}) &= 0\\
      U_{k,0} + 2   \sum_{n \in \mathbb{N}} (-1)^n U_{k,n} &=   b_k,
\end{aligned}
\end{equation}
where, using the discrete convolution and Assumption \ref{ass : assumption on Q}, we have 
\begin{align}
    Q_{k,n}(U) \bydef  \lambda_{\sup} U_{k,n} +    \sum\limits_{\substack{n_1+\dots + n_p = n \\ k_1+\dots + k_p = k\\ k_i,n_i \in \mathbb{Z}}} q_{1,k_1} U_{k_1,|n_1|} \dots  q_{p,k_p}U_{k_p,|n_p|}.
\end{align}
Given $k \in \mathbb{Z}^m$, we define $\mathcal{L}_k$ along with $\Lambda$ and $\Sigma$ 
{\tiny
\begin{equation}
\mathcal{L}_k \bydef \begin{pmatrix}
     1  & -2   & 2   &  -2     & \dots     &~ &~\\
     \mu_k  & 2  & -\mu_k       & ~      &~ &~\\
    ~  & \mu_k   & 4    & -\mu_k      &~ &~\\
    ~  & ~   & \ddots& \ddots  & \ddots  &~&~ \\
    ~  & ~ & ~ & \mu_k   & 2j    & -\mu_k &~\\
     ~  & ~   & ~ & ~ &  \ddots& \ddots  & \ddots
    \end{pmatrix}, ~~
    \Lambda \bydef \begin{pmatrix}
    0   & 0   & 0 & \dots  & ~\\
    - 1  & 0  & 1       & ~      \\
    ~  & -1  & 0    & 1      \\
    ~  & ~   & \ddots& \ddots  & \ddots \\
    \end{pmatrix},~~ 
\Sigma \bydef \begin{pmatrix}
     0    & 0    &\dots       & ~        \\
     1    & ~    & ~       & ~           \\
    ~     & 1    & ~     & ~      & ~  \\
    ~     & ~    & \ddots & ~       \\
    \end{pmatrix}\label{eq : def of sigma and lambda},
\end{equation}
}
as operators on sequences indexed on $\mathbb{N}_0.$ Using such notations, we obtain that \eqref{eq : pde with Fourier and cheb pointwise} is equivalent to
\begin{align}
    \mathcal{L}_k U_k +  \frac{h}{2}\Lambda Q_k(U) + \frac{h}{2} \Lambda \Phi_k - \beta_k = 0
\end{align}
for all $k \in \mathbb{Z}^m$ where $Q_k(U) \bydef \left(Q_{k,n}(U)\right)_{n \in \mathbb{N}_0}$,  $\Phi_k \bydef (\phi_{k,n})_{n \in  \mathbb{N}_0}$  and $\beta_k \bydef (\beta_{k,n})_{n \in  \mathbb{N}_0}$ with
\begin{equation}\label{def : definition of beta}
    \beta_{k,n} \bydef \begin{cases}
      b_k &\text{ if } n=0\\
    0 &\text{ if } n \geq 1.
\end{cases}
\end{equation}
Using \eqref{eq : positivity of muk}, we will show in Section \ref{sec : computation of C0 C1} that $\mathcal{L}_k$ is invertible for all $k \in \mathbb{Z}^m$. In particular, this allows us to define the map $F$ as 
\begin{align*}
    F_k(U) \bydef  U_k +  \frac{h}{2} \mathcal{L}_k^{-1}\Lambda Q_k(U) + \frac{h}{2}\mathcal{L}_k^{-1}\Lambda \Phi_k - \mathcal{L}_k^{-1}\beta_k
\end{align*}
for all $k \in \mathbb{Z}^m$. Then, solutions to \eqref{eq : pde with Fourier and cheb pointwise} are equivalently zeros of $F$. In particular, $F$ can be written in the following condensed form
\begin{align}\label{def : definition of F}
    F(U) = U   + \frac{h}{2}\mathcal{L}^{-1}\mathbf{\Lambda} Q(U)  + \frac{h}{2} \mathcal{L}^{-1}\mathbf{\Lambda} \Phi - \mathcal{L}^{-1}\beta
\end{align}
where 
\begin{align}
    \left(\mathcal{L}U\right)_k \bydef \mathcal{L}_k U_k ~~ \text{ and } ~~ \left(\mathbf{\Lambda} U\right)_k \bydef \Lambda U_k, \qquad \text{for all } k \in \mathbb{Z}^m.
\end{align}

Now that we wrote an equivalent problem in terms of the coefficients of $U$, we define an Banach space for sequences on which we will define $F$. Let $s_1,s_2 \geq 0$ and $\nu \geq 1$, then define the weight $\omega=(\omega_{k,n})$ as 
\begin{align}\label{def : weights}
    \omega_{k,n} \bydef \begin{cases}
        (1+ s_1|k|_1 )^{s_2}\nu^{|k|_1} &\text{ if } n=0\\
        2(1+ s_1|k|_1 )^{s_2}\nu^{|k|_1} &\text{ if } n \geq 1
    \end{cases}
\end{align}
for all $k \in \mathbb{Z}^m$ and $n \in \mathbb{N}_0$, where $|k|_1 \bydef |k_1| + |k_2| + \dots + |k_m|$. In particular, we have
\begin{align}\label{eq : weights inequality}
    \omega_{k+l,n} \leq  \omega_{l,n} \omega_{k,n}, \quad \text{for all } l,k \in \mathbb{Z}^m.
\end{align}

First, let us define $\ell^1 \bydef \ell^1(\mathbb{Z}^m \times \mathbb{N}_0)$ be the usual Lebesgue space for sequences indexed on $\mathbb{Z}^m \times \mathbb{N}_0$ associated to its usual norm $\|\cdot\|_1$ given by
\begin{align*}
    \|V\|_1 \bydef \sum_{(k, n) \in \mathbb{Z}^m \times \mathbb{N}_0} |V_{k,n}|,
\end{align*}
for all $V = (V_{k,n})_{(k, n) \in \mathbb{Z}^m \times \mathbb{N}_0} \in \ell^1.$ Moreover, denote $\ell^1(\mathbb{N}_0)$ the usual Lebesgue space for sequences indexed on $\mathbb{N}_0$. In particular, for each $V \in \ell^1$, we have that $V = (V_k)_{k \in \mathbb{Z}^m}$ and $V_k \in \ell^1(\mathbb{N}_0)$ for all $k \in \mathbb{Z}^m$.
Then, define the Banach space $X$ as 
\begin{align}\label{def : banach space X}
    X \bydef \left\{U = (U_{k,n})_{(k,n) \in \mathbb{Z}^m\times \mathbb{N}_0} \in \ell^1 : \|U\|_{1,\omega} < \infty\right\}, ~~ \text{ where } \|U\|_{1,\omega} \bydef \sum_{k \in \mathbb{Z}^m}\sum_{n \in \mathbb{N}_0}   \left|U_{k,n}\right| \omega_{k,n}.
\end{align}
In particular,  defining $W_0 : \ell^1(\mathbb{N}_0) \to \ell^1(\mathbb{N}_0)$ as the following bounded linear operator 
\begin{align}\label{def : definition of W0}
    (W_0v)_n = \begin{cases}
        v_0 &\text{ if } n=0\\
        2v_n &\text{ if } n>0,
    \end{cases}
\end{align}
we have that 
\begin{equation}\label{eq : indentity on the 1 omega norm}
    \|U\|_{1,\omega} \bydef \sum_{k \in \mathbb{Z}^m} \omega_{k,0} \|W_0U_k\|_1
\end{equation}
for all $U = (U_k)_{k \in \mathbb{Z}^m} \in X$. 
Using \eqref{eq : weights inequality}, we obtain that $X$ is a Banach algebra under the discrete convolution, that is
\begin{align*}
    \|U*V\|_{1,\omega} \leq \|U\|_{1,\omega} \|V\|_{1,\omega} ~ \text{ for all } U, V \in X, ~~ \text{ where }  \left(U*V\right)_{k,n} \bydef \sum_{j \in \mathbb{Z}^m}\sum_{i \in \mathbb{Z}} U_{k-j,|n-i|}V_{j,|i|}.
\end{align*}
Finally, given a bounded linear operator $B : X \to X$,  $\|\cdot\|_{1,\omega}$ provides a natural operator norm for $B$:
\begin{align}\label{eq : operator norm on X}
    \|B\|_{1,\omega} \bydef \sup_{U \in X, \|U\|_{1,\omega} =1} \|BU\|_{1,\omega} = \max_{k \in \mathbb{Z}^m, n \in \mathbb{N}_0} \sum_{j=0}^\infty \sum_{p \in \mathbb{Z}^m} \frac{\omega_{p,j}}{\omega_{k,n}} |B_{(p,j),(k,n)}|,
\end{align}
where $B$ is seen as a matrix $(B_{(p,j),(k,n)})_{(p,j),(k,n) \in \mathbb{Z}^m \times \mathbb{N}_0}.$

Using the above properties, we wish to solve \eqref{eq : initial pde} by equivalently looking for a zero of $F$ in the Banach space $X$. In order to do so, we want to derive a Newton-Kantorovich approach (cf. Section \ref{ssec : Newton Kantorovich}) and look for solutions as fixed-point of a well-chosen map. In particular, the success of such an approach depends on the computation of specific quantities.



\subsection{A proof of existence using a Newton-Kantorovich approach}\label{ssec : Newton Kantorovich}

Let us assume that we have access to an approximate zero $\overline{U} \in X$ of $F$ and that we want to prove the existence of a true zero in a vicinity of $\overline{U}$. The theorem below provides sufficient conditions under which such a statement can be verified.

\begin{theorem}[\bf Newton-Kantorovich Theorem] \label{th : radii polynomial}
     Let $A$ be an injective linear operator such that $AF : X \to X$ is Fréchet differentiable. Let $Y, {Z}_1, r^*$   be non-negative constants and ${Z}_2 : [0,r^*] \to [0,\infty)$ be a non-negative function such that for all $0 \leq r \leq r^{*}$
  \begin{align}\label{eq: definition Y0 Z1 Z2}
  \nonumber
   \|AF(\overline{U})\|_{1,w} &\leq {Y}\\
    \|I_d - ADF(\overline{U})\|_{1,w} &\leq {Z}_1\\\nonumber
   \|A(DF(\overline{U}+V)-DF(\overline{U}))\|_{1,w} &\leq {Z}_2(r)r, ~~ \text{for all } V \in \overline{B_r(0)}.
\end{align}  
If there exists $0 \leq r \leq r^*$ such that
\begin{equation}\label{eq : condition contraction}
    \frac{1}{2}{Z}_2(r)r^2 - (1-{Z}_1)r + {Y}_0 < 0 \text{ and } {Z}_1 + {Z}_2(r) r < 1
 \end{equation}
then there exists a unique zero of $F$ in $\overline{B_r(\overline{U})}$.
\end{theorem}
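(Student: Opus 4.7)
The plan is the standard fixed-point reformulation: define the Newton-like operator $T(U) \bydef U - AF(U)$ on the closed ball $\overline{B_r(\overline{U})}$, show it is a self-map and a contraction, apply Banach's fixed-point theorem, and then use injectivity of $A$ to promote the unique fixed point of $T$ into a unique zero of $F$.

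First, I would parametrize points of the ball by $V \in \overline{B_r(0)}$ and expand
\begin{align*}
T(\overline{U}+V) - \overline{U} &= V - AF(\overline{U}+V) \\
&= -AF(\overline{U}) + \int_0^1 \bigl(I_d - ADF(\overline{U}+sV)\bigr) V \, ds \\
&= -AF(\overline{U}) + \bigl(I_d - ADF(\overline{U})\bigr)V + \int_0^1 A\bigl(DF(\overline{U}) - DF(\overline{U}+sV)\bigr)V \, ds,
\end{align*}
where I have used that $AF$ is Fréchet differentiable to write the fundamental theorem of calculus along the segment. Since $sV \in \overline{B_{sr}(0)} \subset \overline{B_r(0)}$ for $s \in [0,1]$, the three hypotheses in \eqref{eq: definition Y0 Z1 Z2} bound the three terms by $Y$, $Z_1 r$, and $\int_0^1 Z_2(sr)\,sr\,ds \cdot r \le \tfrac{1}{2} Z_2(r) r^2$ respectively (bounding $Z_2(sr) \le Z_2(r)$, which is justified since, as a supremum over a growing ball, $Z_2$ may be assumed nondecreasing without loss of generality). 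The first inequality in \eqref{eq : condition contraction} rearranges to $Y + Z_1 r + \tfrac{1}{2}Z_2(r) r^2 < r$, yielding $T(\overline{U}+V) \in \overline{B_r(\overline{U})}$.

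Next, for the contraction estimate, pick $V_1, V_2 \in \overline{B_r(0)}$ and write
\begin{align*}
T(\overline{U}+V_1) - T(\overline{U}+V_2) = \int_0^1 \bigl(I_d - ADF(\overline{U} + V_2 + s(V_1-V_2))\bigr)(V_1-V_2)\,ds.
\end{align*}
Each point $V_2 + s(V_1-V_2)$ lies in $\overline{B_r(0)}$ by convexity, so splitting $I_d - ADF(\overline{U} + W) = (I_d - ADF(\overline{U})) - A(DF(\overline{U}+W) - DF(\overline{U}))$ and applying the operator-norm bounds from \eqref{eq: definition Y0 Z1 Z2} gives a Lipschitz constant of $Z_1 + Z_2(r) r$, which is strictly less than $1$ by the second inequality in \eqref{eq : condition contraction}. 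Banach's fixed-point theorem then produces a unique $\tilde U \in \overline{B_r(\overline{U})}$ with $T(\tilde U) = \tilde U$, i.e., $AF(\tilde U) = 0$; injectivity of $A$ forces $F(\tilde U) = 0$, and uniqueness of the zero in $\overline{B_r(\overline{U})}$ follows from uniqueness of the fixed point.

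I do not expect any genuine obstacle here: the argument is entirely classical once the three quantities $Y$, $Z_1$, $Z_2$ are in hand. The only delicate point is the book-keeping that extracts the factor $\tfrac{1}{2}$ in front of $Z_2(r) r^2$ from the integral representation, which hinges on the monotonicity convention for $Z_2$; this is why $Z_2$ is formulated as a function of $r$ rather than a single Lipschitz constant. All the real work of the paper lies upstream, in actually computing $Y$, $Z_1$, and $Z_2$ via the explicit decay estimate \eqref{eq:new_tail_estimate} on $\mathcal{L}^{-1}\mathbf{\Lambda}$.
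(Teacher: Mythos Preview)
Your proof is correct and follows exactly the approach the paper has in mind: the introduction explicitly describes defining $T(U) \bydef U - AF(U)$ and showing it is a contraction on $\overline{B_r(\overline{U})}$ via the contraction mapping theorem. The paper itself does not spell out a proof of Theorem~\ref{th : radii polynomial}, treating it as a standard result, so your write-up in fact supplies more detail than the paper does; the book-keeping with the integral representation and the monotonicity convention on $Z_2$ is handled correctly.
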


In practice, the operator $A$ (introduced in the previous result) is chosen as an approximate inverse of $DF(\oU)$. Indeed, note that if \eqref{eq : condition contraction} is satisfied, then $Z_1 < 1$ and, using a Neumann series argument, we obtain that $DF(\oU)$ is invertible.
In what follows, we describe its construction, combining a finite numerical projection and an infinite tail.

Let us fix $N =  (N_1, N_2) \in \mathbb{N}^2$. $N_1$ represents the numerical size of our Fourier series expansion and $N_2$ the one of the Chebyshev series part.  Then,  define the following projection operators
 \begin{align}\label{def : projection operators}
    \left(\pi^{\leq N}(V)\right)_{k,n}  =  \begin{cases}
          V_{k,n},  & (k,n) \in I^{N} \\
              0, & (k,n) \notin I^{N}
    \end{cases} ~~ \text{ and } ~~
     \left(\pi^{> N}(V)\right)_{k,n}  =  \begin{cases}
          0,  & (k,n) \in I^{N} \\
              V_{k,n}, & (k,n) \notin I^{N}
    \end{cases}
 \end{align}
  for all $V =  (V_{k,n})_{(k, n) \in \mathbb{Z}^m \times \mathbb{N}_0} \in \ell^1$, where   $$I^{N} \bydef \left\{(k,n) \in 
 \mathbb{Z}^m \times  \mathbb{N}_0 :  |k|_\infty \leq N_1\text{ and }  n \leq N_2\right\},$$
 and $|k|_\infty \bydef \max\{|k_1|, \dots, |k_m|\}$.
In addition, we define the projectors $\pi^{\leq N_1}$, $\pi^{> N_1}$, $\pi^{\leq N_2}$ and $\pi^{> N_2}$ as 
 \begin{align*}\label{def : projection operators special cases}
    \left(\pi^{\leq {N}_1}(V)\right)_{k,n}  =  \begin{cases}
         V_{k,n} ,  & \text{ if } n \leq N_1 \\
            0 , & \text{ if } n > N_1
    \end{cases} ~~ &\text{ and } ~~
     \left(\pi^{\leq N_2}(V)\right)_{k,n}  =  \begin{cases}
          V_{k,n},  & \text{ if } |k|_\infty \leq N_2 \\
              0,  & \text{ if } |k|_\infty > N_2
    \end{cases}\\
    \pi^{>N_1} \bydef I_d - \pi^{\leq {N}_1} ~~ &\text{ and }  ~~ \pi^{>N_2} \bydef I_d - \pi^{\leq {N}_2}.
 \end{align*}
 In particular, notice that 
 \begin{equation}\label{eq : identity on the projectors}
     \pi^{>N} = \pi^{>N_1} + \pi^{\leq N_1}\pi^{>N_2}.
 \end{equation}
 Furthermore, given a linear operator $B : X \to X$, we denote by $B_{col(k,n)}$ ($(k,n) \in \mathbb{Z}^m \times \mathbb{N}_0$), the $(k,n)^{th}$ column of $B$, that is 
\begin{equation*}
    (B_{col(k,n)})_{j,i} = B_{(j,i),(k,n)}
\end{equation*}
for all $(j,i) \in \mathbb{Z}^m \times \mathbb{N}_0$, where $B$ is seen as an infinite matrix with entries $(B_{(j,i),(k,n)})$. Similarly, for a linear operator  $B : \ell^1(\mathbb{N}_0) \to \ell^1(\mathbb{N}_0)$, we denote by $B_{col(n)}$  the $n^{th}$ column of $B$. Finally, we denote by $(e_i)_{i \in \mathbb{N}_0}$ the standard basis for $\ell^1(\mathbb{N}_0)$, that is $(e_i)_j = \delta_{i,j}$ where $\delta_{i,j}$ is the usual Kronecker delta.

We know focus on the construction of the approximate inverse $A$. First, notice that 
\begin{equation}\label{def : operator K}
    DF(\oU) = I_d + \mathcal{K}, ~~ \text{ where } ~ \mathcal{K} \bydef \frac{h}{2}\mathcal{L}^{-1}\mathbf{\Lambda} DQ(\oU).
\end{equation}
One of the main objective of the present manuscript is to prove that $\mathcal{K} : X \to X$ is a compact operator, and consequently $A$, the approximate inverse of $DF(\oU)$ in Theorem \ref{th : radii polynomial},  can be chosen as a compact perturbation of the identity. Specifically, we construct numerically $A_0$ such that 
\begin{align*}
    A_0 \approx \pi^{\leq N}\left(I_d + \mathcal{K}\right)^{-1}\pi^{\leq N}.
\end{align*}
We choose $A_0$ such that $A_0 = \pi^{\leq N}A_0\pi^{\leq N}$, that is $A_0$ can be seen as a matrix. Then, we define $A_\infty$ as 
\begin{equation}
    A_\infty \bydef \pi^{>N}(I_d - \mathcal{K})\pi^{>N}.
\end{equation}
In particular, $A_\infty$ is an approximate inverse for $\pi^{> N}\left(I_d + \mathcal{K}\right)\pi^{> N}$. Indeed, we have 
\begin{equation}\label{eq : identity A infinity}
    A_\infty \pi^{> N}\left(I_d + \mathcal{K}\right)\pi^{> N} = \pi^{>N} - \left(\pi^{>N}\mathcal{K}\pi^{>N}\right)^2,
\end{equation}
and if $\pi^{>N}\mathcal{K}\pi^{>N}$ is small, then the above justifies the choice for $A_\infty.$
Using $A_0$ and $A_\infty$, we construct $A$ as follows
\begin{align}\label{def : operator A}
    A \bydef A_0 + A_\infty - A_\infty \mathcal{K} A_0 - A_0\mathcal{K}A_\infty. 
\end{align}
Note that the ``tail'' of the operator $A$ is given by $A_\infty$ and is a compact perturbation of the identity.

Finally, we assume that $\oU$ satisfies
\begin{align}
    \oU = \pi^{\leq N}\oU,
\end{align}
that is $\oU$ can be seen as a finite-dimensional vector. In fact, $\oU$ is obtained numerically as an approximate zero of $F$.
We demonstrate in the next section that the specific structures of $\overline{U}$ and $A$ provide computable bounds $Y, Z_1$ and $Z_2$. Specifically, such bounds will be computed thanks to rigorous numerics.

\subsection{Derivation of the bounds for the Newton-Kantorovich Theorem}\label{sec : computation of the bounds}

In this subsection, we present some analysis required for the computations for the bounds involved in Theorem~\ref{th : radii polynomial}. Specifically, we emphasize that, under the construction of $\overline{U}$ and $A$ as described earlier, these bounds can be determined through calculations involving finite-dimensional objects. These computations can be rigorously performed using numerical methods, enabling verification of condition \eqref{eq : condition contraction} and providing a basis for an existence proof. Explicit formulas are provided for estimating each bound, along with detailed quantities related to the operator $\mathcal{L}^{-1}\mathbf{\Lambda}$, which plays a crucial role in our computer-assisted analysis. In particular, we show that the derived estimates can be explicitly computed, subject to controlling the decay induced by the operator $\mathcal{L}^{-1}\mathbf{\Lambda}$. 

More specifically, we are interested in non-negative and uniformly bounded functions $C_0, C_{1,n}, C_{A_0,n} : [0,\infty) \to (0,\infty)$ (cf. Section \ref{sec : computation of C0 C1}) such that 
\begin{equation}\label{eq : ineq C0 C1n CA0n}
    \begin{aligned}
    \|W_0 \mathcal{L}_k^{-1} \Lambda W_0^{-1}\|_1 \leq \frac{C_0(\mu_k)}{2+\mu_k}, ~~ &\text{ for all } k \in \mathbb{Z}^m \\
    \frac{1}{2-\delta_{0,n}}\sum_{j = 0}^\infty \left|\left(W_0\mathcal{L}_k^{-1} \Lambda\right)_{j,n}\right| \leq \frac{C_{1,n}(\mu_k)}{n+ 1+ \sqrt{n^2+\mu_k^2}},  ~~ &\text{ for all } |k|_\infty \leq N_1\\
    \sum_{j = 0}^\infty  \sum_{p \in \mathbb{Z}^m, |p|_\infty \leq N_1} \frac{\omega_{p,j} }{\omega_{k,n} }\left|\left(A_0\mathcal{L}^{-1} \mathbf{\Lambda}\right)_{(p,j),(k,n)}\right| \leq \frac{C_{A_0,n}(\mu_k)}{n+ 1+ \sqrt{n^2+\mu_k^2}}, ~~ &\text{ for all } |k|_\infty \leq N_1 
\end{aligned} 
\end{equation}
for  all $n \in \mathbb{N}_0$, and where $\delta_{0,n}$ is the usual Kronecker delta. Let
$\mathcal{D}_{C_0}$, $\mathcal{D}_{C_1}$ and $\mathcal{D}_{A_0}$ be diagonal linear operators defined as 
\begin{equation}\label{def : operators DC0 and DC1}
    (\mathcal{D}_{C_0}U)_{k,n} \bydef \frac{C_0(\mu_k) U_{k,n}}{2+\mu_k}, ~~
    (\mathcal{D}_{C_{1}}U)_{k,n} \bydef \frac{C_{1,n}(\mu_k)  U_{k,n}}{n+1+\sqrt{n^2+\mu_k^2}}, ~~
    (\mathcal{D}_{A_0}U)_{k,n} \bydef \frac{C_{A_0,n}(\mu_k) U_{k,n}}{n+1+\sqrt{n^2+\mu_k^2}}      
\end{equation}
for all $k \in \mathbb{Z}^m$ and $n \in \mathbb{N}_0$, where we fix $C_{1,n}(\mu_k) = C_{A_0,n}(\mu_k) = 1$ for all $|k|_\infty > N_1$. Such diagonal operators will be useful in our analysis as they allow to estimate the operator $\mathcal{L}^{-1}\mathbf{\Lambda}$ column-wise. In particular, combining the properties the norm $\|\cdot\|_{1,\omega}$ in \eqref{eq : indentity on the 1 omega norm} and \eqref{eq : operator norm on X} with \eqref{eq : ineq C0 C1n CA0n} and \eqref{def : operators DC0 and DC1}, we have 
\begin{align}\label{eq : identity on DC0 and DC1}
    \|\mathcal{L}^{-1}\mathbf{\Lambda} \mathcal{D}_{C_0}^{-1}\|_{1,\omega} \leq 1, ~\|\mathcal{L}^{-1}\mathbf{\Lambda} \pi^{\leq N_1}\pi^{>N_2} \mathcal{D}_{C_1}^{-1}\|_{1,\omega} \leq 1 ~ \text{ and } ~ \|A_0\mathcal{L}^{-1}\mathbf{\Lambda} \mathcal{D}_{A_0}^{-1}\|_{1,\omega} \leq 1. 
\end{align}

The above quantities, once computed explicitly, provide a computer-assisted strategy for the application of Theorem \ref{th : radii polynomial}.
This process is first illustrated by the following result, which provides an explicit and computable bound $Z_1$ satisfying \eqref{eq: definition Y0 Z1 Z2}.

\begin{lemma}\label{lem : Z1 bound}
Let $Z_0, Z_\infty, Z_{\leq N}$ and $Z_{A_0}$ be defined as 
\begin{align*}
Z_0 &\bydef \| \pi^{\leq N} - A_0(I_d +\mathcal{K})\pi^{\leq N}\|_{1,\omega} \\
Z_\infty & \bydef \frac{h}{2}\max\bigg\{\|\pi^{>N_1}\mathcal{D}_{C_0}DQ(\oU)\pi^{>N_1}\|_{1,\omega} + \|\pi^{\leq N_1}\pi^{>N_2}\mathcal{D}_{C_1}DQ(\oU)\pi^{>N_1}\|_{1,\omega},\\
    &~~~~~~~~~~~~~~~\|\pi^{>N_1}\mathcal{D}_{C_0}DQ(\oU)\pi^{\leq N_1}\pi^{>N_2}\|_{1,\omega} + \|\pi^{\leq N_1}\pi^{>N_2}\mathcal{D}_{C_1}DQ(\oU)\pi^{\leq N_1}\pi^{>N_2}\|_{1,\omega}\bigg\}\\
     Z_{\leq N} &\bydef \frac{h}{2}\|\pi^{>N_1}\mathcal{D}_{C_0}DQ(\oU)\pi^{\leq N}\|_{1,\omega} + \frac{h}{2}\|\pi^{\leq N_1}\pi^{>N_2}\mathcal{D}_{C_1}DQ(\oU)\pi^{\leq N}\|_{1,\omega}\\
     Z_{A_0} &\bydef \frac{h}{2}\max\left\{ \|\pi^{\leq N_1}\mathcal{D}_{A_0} DQ(\oU)\pi^{>N_1}\|_{1,\omega}, ~ \|\pi^{\leq N_1}\mathcal{D}_{A_0}  DQ(\oU)\pi^{\leq N_1}\pi^{>N_2}\|_{1,\omega}\right\}.
\end{align*}
Then 
\begin{equation}\label{ineq : Z infinity Z N ZA0}
   \|\pi^{>N}\K\pi^{> N}\|_{1,\omega} \le Z_\infty, \quad
\|\pi^{>N}\K\pi^{\leq N}\|_{1,\omega} \le Z_{\leq N}, \quad \text{and} \quad
\|A_0\K\pi^{>N}\|_{1,\omega} \le Z_{A_0}, 
\end{equation}
and letting $Z_1$ be defined as 
\begin{align}\label{def : definition of Z1}
    Z_1 \bydef \max\left\{Z_\infty^2 + (1+Z_\infty)Z_{\leq N}Z_{A_0} +  Z_{A_0}Z_\infty^2, ~ Z_{0}(1+Z_{\leq N} + Z_{\leq N}Z_{\infty}) + Z_{\leq N}Z_{A_0}(1+Z_\infty)  \right\}
\end{align}
 we have $\|I_d - ADF(\overline{U})\|_{1,\omega} \leq Z_1.$
\end{lemma}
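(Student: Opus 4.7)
The plan is to reduce the inequality to a $2\times 2$ block computation on the splitting $X = \pi^{\leq N}X \oplus \pi^{>N}X$, combining the block-diagonal structure of $\mathcal{L}^{-1}\mathbf{\Lambda}$ in the Fourier index $k$ with the cancellation packaged in \eqref{eq : identity A infinity}. I would first verify the three intermediate bounds \eqref{ineq : Z infinity Z N ZA0}. Writing $\mathcal{K} = \frac{h}{2}\mathcal{L}^{-1}\mathbf{\Lambda}DQ(\oU)$, the estimates \eqref{eq : identity on DC0 and DC1} let me replace the prefactor $\mathcal{L}^{-1}\mathbf{\Lambda}$ by the appropriate diagonal majorant $\mathcal{D}_{C_0}$, $\mathcal{D}_{C_1}$ or $\mathcal{D}_{A_0}$. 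Because the $C_{1,n}$ estimate in \eqref{eq : ineq C0 C1n CA0n} is only meaningful for row-indices with $|k|_\infty \leq N_1$, the range projector is first split via \eqref{eq : identity on the projectors} as $\pi^{>N} = \pi^{>N_1} + \pi^{\leq N_1}\pi^{>N_2}$; the $\pi^{>N_1}$ piece is routed through $\mathcal{D}_{C_0}$, while the $\pi^{\leq N_1}\pi^{>N_2}$ piece is routed through $\mathcal{D}_{C_1}$. Since $\|\cdot\|_{1,\omega}$ is a weighted column-sum operator norm, across two disjoint column-blocks it is taken as a \emph{maximum}, while within a fixed column-block the two row-blocks contribute \emph{additively}. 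Applying this simultaneously on both sides of $\pi^{>N}\mathcal{K}\pi^{>N}$ produces exactly $Z_\infty$; the analogous computation without the outer max (only one column-block appears) gives $Z_{\leq N}$, and $Z_{A_0}$ follows because $A_0 = \pi^{\leq N}A_0\pi^{\leq N}$ automatically restricts the row index to $|k|_\infty \leq N_1$, where the third estimate of \eqref{eq : ineq C0 C1n CA0n} applies.

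Next, I would express $A$ and $I_d + \mathcal{K}$ as $2\times 2$ block operators with respect to $\pi^{\leq N}$ and $\pi^{>N}$ and compute $I_d - A(I_d + \mathcal{K})$. Denoting the blocks of $\mathcal{K}$ by $\mathcal{K}_{ij}$ (with $0$ for $\pi^{\leq N}$ and $1$ for $\pi^{>N}$), the support properties $A_0 = \pi^{\leq N}A_0\pi^{\leq N}$ and $A_\infty = \pi^{>N} - \mathcal{K}_{11}$ reduce \eqref{def : operator A} to the block operator with entries $A_0$, $-A_0\mathcal{K}_{01}(\pi^{>N} - \mathcal{K}_{11})$, $-(\pi^{>N} - \mathcal{K}_{11})\mathcal{K}_{10}A_0$ and $\pi^{>N} - \mathcal{K}_{11}$. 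Multiplying by $I_d + \mathcal{K}$ and using $(\pi^{>N} - \mathcal{K}_{11})(\pi^{>N} + \mathcal{K}_{11}) = \pi^{>N} - \mathcal{K}_{11}^2$ from \eqref{eq : identity A infinity}, the four blocks of $I_d - A(I_d + \mathcal{K})$ collapse to $(\pi^{\leq N} - A_0(I_d+\mathcal{K})\pi^{\leq N}) + A_0\mathcal{K}_{01}\mathcal{K}_{10} - A_0\mathcal{K}_{01}\mathcal{K}_{11}\mathcal{K}_{10}$ for $(1,1)$, $-A_0\mathcal{K}_{01}\mathcal{K}_{11}^2$ for $(1,2)$, $-(\pi^{>N} - \mathcal{K}_{11})\mathcal{K}_{10}(\pi^{\leq N} - A_0(I_d+\mathcal{K})\pi^{\leq N})$ for $(2,1)$, and $\mathcal{K}_{11}^2 + (\pi^{>N} - \mathcal{K}_{11})\mathcal{K}_{10}A_0\mathcal{K}_{01}$ for $(2,2)$.

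Finally, since $\|\cdot\|_{1,\omega}$ splits additively across the domain decomposition $X = \pi^{\leq N}X \oplus \pi^{>N}X$ and is a max of column sums, the full operator norm equals the maximum over the two choices of column-block of the sum of the two row-block norms. Sub-multiplicativity together with the definition of $Z_0$ and the bounds from the first step converts the $\pi^{\leq N}$-column total into $Z_0(1 + Z_{\leq N} + Z_{\leq N}Z_\infty) + Z_{\leq N}Z_{A_0}(1 + Z_\infty)$ and the $\pi^{>N}$-column total into $Z_\infty^2 + (1 + Z_\infty)Z_{\leq N}Z_{A_0} + Z_{A_0}Z_\infty^2$; their maximum is precisely $Z_1$ as in \eqref{def : definition of Z1}. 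The main obstacle is really in the first step, where each row-index in $\pi^{>N}$ must be sent to an estimate in \eqref{eq : ineq C0 C1n CA0n} that is actually valid there, while keeping straight the additive-across-rows / maximum-across-columns dichotomy of the column-sum norm; the remaining block algebra is mechanical once the cancellation from \eqref{eq : identity A infinity} has been spotted.
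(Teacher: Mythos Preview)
Your proposal is correct and follows essentially the same approach as the paper's proof: both first establish \eqref{ineq : Z infinity Z N ZA0} by splitting the range projector $\pi^{>N}=\pi^{>N_1}+\pi^{\leq N_1}\pi^{>N_2}$ and routing each piece through the appropriate diagonal majorant via \eqref{eq : identity on DC0 and DC1}, and then compute $I_d-A(I_d+\mathcal{K})$ block by block with respect to $\pi^{\leq N}\oplus\pi^{>N}$, exploiting the cancellation $A_\infty(\pi^{>N}+\mathcal{K}_{11})=\pi^{>N}-\mathcal{K}_{11}^2$. The only difference is cosmetic: the paper writes out each of the four blocks longhand by expanding the definition of $A$, whereas you package the same identities into a $2\times2$ block-matrix product; the resulting four block expressions, the norm estimates, and the final max are identical.
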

\begin{proof}
We start by proving \eqref{ineq : Z infinity Z N ZA0}. Using the definition of the operator norm \eqref{eq : operator norm on X} and \eqref{eq : identity on the projectors}, we have 
\begin{align*}
    \|\pi^{>N}\K\pi^{>N}\|_{1,\omega} = \max\left\{\|\pi^{>N}\K\pi^{>N_1}\|_{1,\omega}, \|\pi^{>N}\K\pi^{\leq N_1}\pi^{>N_2}\|_{1,\omega}\right\}.
\end{align*}
Focusing on the first term $\|\pi^{>N}\K\pi^{>N_1}\|_{1,\omega}$, we get
\footnotesize{
\begin{align*}
    &\|\pi^{>N}\K\pi^{>N_1}\|_{1,\omega} = \frac{h}{2}\|\pi^{>N}\mathcal{L}^{-1}\mathbf{\Lambda}DQ(\oU)\pi^{>N_1}\|_{1,\omega} \\
    &\leq \frac{h}{2}\|\pi^{>N_1}\mathcal{L}^{-1}\mathbf{\Lambda}DQ(\oU)\pi^{>N_1}\|_{1,\omega} + \frac{h}{2}\|\mathcal{L}^{-1}\mathbf{\Lambda}\pi^{\leq N_1}\pi^{>N_2}DQ(\oU)\pi^{>N_1}\|_{1,\omega}\\
    &\leq  \frac{h}{2}\|\pi^{>N_1}\mathcal{L}^{-1}\mathbf{\Lambda}\mathcal{D}_{C_0}^{-1}\|_{1,\omega}\|\pi^{>N_1}\mathcal{D}_{C_0}DQ(\oU)\pi^{>N_1}\|_{1,\omega} + \frac{h}{2}\|\mathcal{L}^{-1}\mathbf{\Lambda}\mathcal{D}_{C_1}^{-1}\pi^{\leq N_1}\pi^{>N_2}\|_{1,\omega}\|\pi^{\leq N_1}\pi^{>N_2}\mathcal{D}_{C_1}DQ(\oU)\pi^{>N_1}\|_{1,\omega}
\end{align*}
}
\normalsize
where we used that $\mathcal{L}^{-1}$, $\mathbf{\Lambda}$ and $\mathcal{D}_{C_0}$ all commute with $\pi^{>N_1}$ and $\pi^{\leq N_1}$.
But using \eqref{eq : identity on DC0 and DC1}, we obtain that 
\begin{align*}
    \|\pi^{>N}\K\pi^{>N_1}\|_{1,\omega}  \leq \frac{h}{2}\|\pi^{>N_1}\mathcal{D}_{C_0}DQ(\oU)\pi^{>N_1}\|_{1,\omega} + \frac{h}{2}\|\pi^{\leq N_1}\pi^{>N_2}\mathcal{D}_{C_1}DQ(\oU)\pi^{>N_1}\|_{1,\omega}.
\end{align*}
On the other hand, we have 
\begin{align*}
    \|\pi^{>N}\K\pi^{\leq N_1}\pi^{>N_2}\|_{1,\omega} &\leq \|\pi^{>N_1}\K\pi^{\leq N_1}\pi^{>N_2}\|_{1,\omega} + \|\pi^{\leq N_1} \pi^{>N_2}\K\pi^{\leq N_1}\pi^{>N_2}\|_{1,\omega}\\
    &\leq \frac{h}{2}\|\pi^{>N_1}\mathcal{D}_{C_0}DQ(\oU)\pi^{\leq N_1}\pi^{>N_2}\|_{1,\omega} + \frac{h}{2}\|\pi^{\leq N_1}\pi^{>N_2}\mathcal{D}_{C_1}DQ(\oU)\pi^{\leq N_1}\pi^{>N_2}\|_{1,\omega}.
\end{align*}
This implies that $\|\pi^{>N}\K\pi^{> N}\|_{1,\omega} \leq Z_\infty$.
A similar reasoning can be applied for the term  $\|\pi^{>N}\K\pi^{\leq N}\|_{1,\omega}$ and  one can easily prove that $\|\pi^{>N}\K\pi^{\leq N}\|_{1,\omega} \leq Z_{\leq N}$. Finally, concerning the term $\|A_0\K\pi^{>N}\|_{1,\omega}$, we have the following
\begin{align*}
    \|A_0\K\pi^{>N}\|_{1,\omega} &=\frac{h}{2} \max\left\{ \|A_0\mathcal{L}^{-1}\mathbf{\Lambda} DQ(\oU)\pi^{>N_1}\|_{1,\omega}, ~ \|A_0\mathcal{L}^{-1}\mathbf{\Lambda} DQ(\oU)\pi^{\leq N_1}\pi^{>N_2}\|_{1,\omega}\right\}\\
    &\leq \frac{h}{2}\max\left\{ \|\pi^{\leq N_1}\mathcal{D}_{A_0} DQ(\oU)\pi^{>N_1}\|_{1,\omega}, ~ \|\pi^{\leq N_1}\mathcal{D}_{A_0}  DQ(\oU)\pi^{\leq N_1}\pi^{>N_2}\|_{1,\omega}\right\} = Z_{A_0}
\end{align*}
using the definition of $\mathcal{D}_{A_0}$ and \eqref{eq : identity on DC0 and DC1}.
\vspace{0.2cm}

Now, we prove that $Z_1$, given in \eqref{def : definition of Z1}, satisfies $\|I_d - A DF(\oU)\|_{1,\omega} \leq Z_1$.
Using the definition of the operator norm $\|\cdot\|_{1,\omega}$ in \eqref{eq : operator norm on X} and recalling \eqref{def : operator K}, we have
\begin{equation}\label{eq : splitting Z1 max}
    \|I_d - ADF(\oU)\|_{1,\omega} = \left\|I_d - A(I_d + \frac{h}{2}\mathcal{L}^{-1}\mathbf{\Lambda} DQ(\oU))\right\|_{1,\omega} = \max\left\{ Z_1^{\le N}, Z_1^{> N} \right\}
\end{equation}
where 
\begin{equation}
Z_1^{\le N} \bydef \left\|\pi^{\leq N} - A(I_d + \mathcal{K})\pi^{\leq N}\right\|_{1,\omega}
\quad \text{and} \quad 
Z_1^{> N} \bydef  \left\|\pi^{>N} - A(I_d + \mathcal{K})\pi^{>N}\right\|_{1,\omega}.
\end{equation}
In the rest of the proof, we bound separately the two quantities $Z_1^{\le N}$ and $Z_1^{>N}$. 
To handle $Z_1^{\le N}$, consider the splitting
\[
\pi^{\leq N} - A(I_d + \mathcal{K})\pi^{\leq N} = 
\left( \pi^{\leq N} - \pi^{\leq N} A(I_d + \mathcal{K})\pi^{\leq N} \right) - \pi^{> N} A(I_d + \mathcal{K})\pi^{\leq N}
\]
where each term is written as 
\begin{align*}
    \pi^{\leq N} - \pi^{\leq{N}}A(I_d + \mathcal{K})\pi^{\leq N} &= \pi^{\leq N} - A_0(I_d + \mathcal{K})\pi^{\leq N}  + A_0 \mathcal{K}A_\infty(I_d + \mathcal{K})\pi^{\leq N}\\
    &= \pi^{\leq N} - A_0(I_d + \mathcal{K})\pi^{\leq N}  + A_0 \mathcal{K}A_\infty\mathcal{K}\pi^{\leq N}\\
    & = \pi^{\leq N} - A_0(I_d + \mathcal{K})\pi^{\leq N} + A_0 \mathcal{K}\pi^{>N}\mathcal{K}\pi^{\leq N} - A_0 \mathcal{K}\pi^{>N}\K\pi^{>N}\mathcal{K}\pi^{\leq N}
\end{align*}
and
{ \small
\begin{align*}
     \pi^{>{N}}A(I_d + \mathcal{K})\pi^{\leq N} &=  A_\infty(I_d + \mathcal{K})\pi^{\leq N} -  A_\infty \mathcal{K} A_0(I_d + \mathcal{K})\pi^{\leq N}\\
     &= \pi^{>N}(I_d - \mathcal{K})\pi^{>N}\mathcal{K}\pi^{\leq N} -  \pi^{>N}(I_d - \mathcal{K})\pi^{>N}\mathcal{K} A_0(I_d + \mathcal{K})\pi^{\leq N} \\
     & = \pi^{>N}(I_d - \mathcal{K})\pi^{>N}\mathcal{K}\pi^{\leq N} 
     - \pi^{>N}(I_d - \mathcal{K})\pi^{>N}\mathcal{K} \pi^{\leq N} \\
     & \quad
     - \pi^{>N}(I_d - \mathcal{K})\pi^{>N}\mathcal{K}(A_0(I_d + \mathcal{K})\pi^{\leq N}-\pi^{\leq N})
     \\
     & = \pi^{>N}(I_d - \mathcal{K})\pi^{>N}\mathcal{K}(\pi^{\leq N} - A_0(I_d+ \mathcal{K})\pi^{\leq N}).
\end{align*}
}
In particular, using \eqref{ineq : Z infinity Z N ZA0}, we find that 
\small{
\begin{align*}
    &\| \pi^{\leq N} - \pi^{\leq{N}}A(I_d + \mathcal{K})\pi^{\leq N}\|_{1,\omega} \\
    &\leq \|\pi^{\leq N} - A_0(I_d + \mathcal{K})\pi^{\leq N}\|_{1,\omega} + \|A_0 \mathcal{K}\pi^{>N}\|_{1,\omega}\|\pi^{>N}\mathcal{K}\pi^{\leq N}\|_{1,\omega} + \|A_0 \mathcal{K}\pi^{>N}\|_{1,\omega}\|\pi^{>N}\K\pi^{>N}\|_{1,\omega}\|\pi^{>N}\mathcal{K}\pi^{\leq N}\|_{1,\omega}\\
    &= Z_0 + Z_{A_0}Z_{\leq N} + Z_{A_0}Z_{\infty}Z_{\leq N}
\end{align*}
}
\normalsize
and similarly
\begin{align*}
     \|\pi^{>{N}}A(I_d + \mathcal{K})\pi^{\leq N}\|_{1,\omega} &\leq \| \pi^{>N}(I_d - \mathcal{K})\pi^{>N}\|_{1,\omega}\|\pi^{>N}\mathcal{K}\pi^{\leq N}\|_{1,\omega}\|\pi^{\leq N} - A_0(I_d+ \mathcal{K})\pi^{\leq N}\|_{1,\omega}\\
     &=  (1+Z_\infty)Z_{\leq N} Z_0.
\end{align*}
In particular, we have obtained that 
\begin{equation}\label{eq : Z1 finite}
    Z_{1}^{\leq N} \leq Z_{0}(1+Z_{\leq N} + Z_{\leq N}Z_{\infty}) + Z_{\leq N}Z_{A_0}(1+Z_\infty).
\end{equation}
To handle $Z_1^{>N}$, we consider the splitting
\[
\pi^{>N} - A(I_d + \mathcal{K})\pi^{>N} = \pi^{>N} - \pi^{>N}A(I_d + \mathcal{K})\pi^{>N}  - \pi^{\leq N}A(I_d + \mathcal{K})\pi^{>N}.
\]
Focusing first on the part $ \pi^{>N} - \pi^{>N}A(I_d + \mathcal{K})\pi^{>N}$ and  using the definition of $A$ in \eqref{def : operator A}, we get 
{\small{
\begin{align*}
    \pi^{>N} - \pi^{>N}A(I_d + \mathcal{K})\pi^{>N} &= \pi^{>N} - A_\infty(I_d + \mathcal{K})\pi^{>N} + A_\infty\mathcal{K}A_0 (I_d + \mathcal{K})\pi^{>N}\\
   & = \pi^{>N} - \pi^{>N}(I_d - \mathcal{K})\pi^{>N}(I_d +\mathcal{K})\pi^{>N} + \pi^{>N}(I_d - \mathcal{K})\pi^{>N}\mathcal{K}A_0 (I_d + \mathcal{K})\pi^{>N}\\
   & = \pi^{>N}\K \pi^{>N}\K\pi^{>N} + \pi^{>N}(I_d - \mathcal{K})\pi^{>N}\mathcal{K}A_0 (I_d + \mathcal{K})\pi^{>N}\\
   & = \pi^{>N}\K \pi^{>N}\K\pi^{>N} + \pi^{>N}(I_d - \mathcal{K})\pi^{>N}\mathcal{K}A_0 \mathcal{K}\pi^{>N},
\end{align*}}}
where we used that $A_0 \pi^{>N} = A_0\pi^{\leq N} \pi^{>N} = 0$ for the last identity. 
On the other hand, using \eqref{eq : identity A infinity},
\begin{align*}
      \pi^{\leq N}A(I_d + \mathcal{K})\pi^{>N} &= A_0(I_d + \mathcal{K})\pi^{>N} - A_0\mathcal{K}A_\infty(I_d + \mathcal{K})\pi^{>N}\\
     & = A_0(I_d + \mathcal{K})\pi^{>N} - A_0\mathcal{K}\pi^{>N}(I_d -\mathcal{K}\pi^{>N} \mathcal{K})\pi^{>N}\\
      &=  A_0\mathcal{K}\pi^{>N}\mathcal{K}\pi^{>N}\mathcal{K}\pi^{>N}
\end{align*}
where we used again that $A_0\pi^{>N} = 0.$ Similarly as the reasoning achieved for $Z^{\leq N}_1$, and using \eqref{ineq : Z infinity Z N ZA0}, we get
\begin{equation}\label{eq : ineq Z1 infinity}
    Z_{1}^{>N} \leq Z_\infty^2 + (1+Z_\infty)Z_{\leq N}Z_{A_0} +  Z_{A_0}Z_\infty^2.
\end{equation}
We conclude the proof combining \eqref{eq : splitting Z1 max}, \eqref{eq : Z1 finite} and \eqref{eq : ineq Z1 infinity}.
\end{proof}

\begin{remark}\label{rem : remark Z1}
    First, notice that $Z_{0}$ is an upper bound for a matrix norm. Such a computation is easily achieved using rigorous numerics and is expected to be very small as $A_0$ is chosen as an approximate inverse for $\pi^{\leq N}+ \pi^{\leq N}\K\pi^{\leq N}$. Then, notice that the bounds $Z_{\leq N}$, $Z_\infty$ and $Z_{A_0}$ all boil down to the computation of an operator norm of a diagonal matrix with decay along the diagonal (namely $\mathcal{D}_{C_0}$, $\mathcal{D}_{C_1}$ or $\mathcal{D}_{A_0}$) multiplied by $DQ(\oU)$. Such a computation has been extensively studied in computer-assisted analysis and we refer to \cite{ unbounded_domain_cadiot, MR3623202,navier-stokes} for illustrations of its treatment.

    Finally, notice that $Z_{\leq N}$, $Z_\infty$ and $Z_{A_0}$ all depend linearly in $h$. In particular, if $Z_0 \ll 1$, then   $Z_1 = \mathcal{O}(h^2)$. This implies that $Z_1$ depends quadratically on $h$, which can be useful when verifying \eqref{eq : condition contraction}.
\end{remark}

\begin{lemma}\label{lem : Y bound}
Let $Z_{\leq N}, Z_{A_0}$ and $Z_\infty$ be given as in Lemma \ref{lem : Z1 bound}. Moreover, let $Y_{i}$ ($i \in \{0,\dots,3\}$) be defined as follows
    \begin{align*}
        Y_{0} &\bydef \|A_0F(\oU)\|_{1,\omega}\\
        Y_1 & \bydef \frac{h}{2}\sum_{|k|_\infty > N_1} \frac{C_0(\mu_k)}{2+\mu_k} \|W_0 (Q(\oU)_k+\Phi_k)\|_1\\
        Y_2 & \bydef \sum_{|k|_\infty > N_1} \omega_{0,k}|b_k|\\
        Y_3 & \bydef  \big\|\pi^{\leq N_1}\pi^{>N_2}\mathcal{L}^{-1}\big(\frac{h}{2}\Lambda (Q(\oU)+\Phi) - \beta\big)\big\|_{1,\omega}
     \end{align*}
    Then defining 
    \begin{align}\label{def : definition of Y0}
        Y \bydef Y_{0}(1 + Z_{\leq N}(1+Z_\infty)) + (Y_1 + Y_2 + Y_3)(1+Z_\infty)( Z_{A_0}+1),
    \end{align}
    we obtain $\|AF(\overline{U})\|_{1,\omega} \leq Y$.
\end{lemma}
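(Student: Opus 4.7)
The plan is to expand $AF(\oU)$ according to the definition of $A$ in \eqref{def : operator A}, namely
\begin{equation*}
AF(\oU) = A_0 F(\oU) + A_\infty F(\oU) - A_\infty \mathcal{K} A_0 F(\oU) - A_0 \mathcal{K} A_\infty F(\oU),
\end{equation*}
and to control each of the four terms separately by combining norm sub-multiplicativity with the three key bounds from Lemma~\ref{lem : Z1 bound} together with the $C_0$-estimate \eqref{eq : ineq C0 C1n CA0n}. The first term is handled trivially: by definition $\|A_0 F(\oU)\|_{1,\omega} = Y_0$. The remaining three terms all require a preliminary bound on $\|\pi^{>N} F(\oU)\|_{1,\omega}$, which is the heart of the argument and the main obstacle.

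To bound $\|\pi^{>N} F(\oU)\|_{1,\omega}$, I would first exploit that $\oU = \pi^{\leq N}\oU$, so $\pi^{>N}\oU = 0$ and
\begin{equation*}
\pi^{>N}F(\oU) = \pi^{>N}\left[ \tfrac{h}{2}\mathcal{L}^{-1}\mathbf{\Lambda}(Q(\oU)+\Phi) - \mathcal{L}^{-1}\beta \right].
\end{equation*}
Using \eqref{eq : identity on the projectors} to split $\pi^{>N} = \pi^{>N_1} + \pi^{\leq N_1}\pi^{>N_2}$, the second piece is, by definition, bounded by $Y_3$. For the $\pi^{>N_1}$ part, I would use that $\pi^{>N_1}$ commutes with the block-diagonal operators $\mathcal{L}^{-1}$ and $\mathbf{\Lambda}$, so the bound reduces to a sum over $|k|_\infty > N_1$ of $\|W_0 \mathcal{L}_k^{-1}\Lambda W_0^{-1}\|_1 \|W_0(Q(\oU)_k+\Phi_k)\|_1$ plus an analogous term involving $\mathcal{L}_k^{-1}\beta_k = b_k\,\mathcal{L}_k^{-1}e_0$. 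The first estimate \eqref{eq : ineq C0 C1n CA0n} then produces precisely $Y_1$, and the term arising from $\beta$ is bounded by $Y_2$. Altogether this yields $\|\pi^{>N}F(\oU)\|_{1,\omega} \le Y_1+Y_2+Y_3$.

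With this tail estimate in hand, the three remaining terms are straightforward. Since $A_\infty = \pi^{>N}(I_d-\mathcal{K})\pi^{>N}$, we have
\begin{equation*}
\|A_\infty F(\oU)\|_{1,\omega} \le \|\pi^{>N}(I_d-\mathcal{K})\pi^{>N}\|_{1,\omega}\,\|\pi^{>N}F(\oU)\|_{1,\omega} \le (1+Z_\infty)(Y_1+Y_2+Y_3),
\end{equation*}
using $\|\pi^{>N}\mathcal{K}\pi^{>N}\|_{1,\omega} \le Z_\infty$ from \eqref{ineq : Z infinity Z N ZA0}. For the mixed term $A_\infty \mathcal{K} A_0 F(\oU)$, I would insert the identity $\pi^{\leq N}$ to the right of $\mathcal{K}$ (since $A_0 = \pi^{\leq N}A_0\pi^{\leq N}$) and write
\begin{equation*}
\|A_\infty \mathcal{K} A_0 F(\oU)\|_{1,\omega} \le \|\pi^{>N}(I_d-\mathcal{K})\pi^{>N}\|_{1,\omega}\,\|\pi^{>N}\mathcal{K}\pi^{\leq N}\|_{1,\omega}\,\|A_0 F(\oU)\|_{1,\omega} \le (1+Z_\infty)Z_{\leq N}Y_0.
\end{equation*}
Finally, for $A_0 \mathcal{K} A_\infty F(\oU)$, I would use that $A_\infty F(\oU)$ is supported on $\pi^{>N}$ to insert $\pi^{>N}$ to the right of $\mathcal{K}$, yielding
\begin{equation*}
\|A_0 \mathcal{K} A_\infty F(\oU)\|_{1,\omega} \le \|A_0\mathcal{K}\pi^{>N}\|_{1,\omega}\,\|A_\infty F(\oU)\|_{1,\omega} \le Z_{A_0}(1+Z_\infty)(Y_1+Y_2+Y_3).
\end{equation*}

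Summing the four contributions and collecting terms recovers exactly the formula \eqref{def : definition of Y0} for $Y$. The main obstacle is the tail bound $\|\pi^{>N}F(\oU)\|_{1,\omega} \le Y_1+Y_2+Y_3$: it is the only step that genuinely uses the new decay estimate \eqref{eq:new_tail_estimate} baked into the definition of $C_0(\mu_k)$, and it also requires identifying a separate controllable bound for the contribution coming from the initial-condition vector $\beta$ through the first column of $\mathcal{L}_k^{-1}$; the remaining estimates are then purely mechanical applications of the sub-multiplicativity of $\|\cdot\|_{1,\omega}$ together with the already-established bounds from Lemma~\ref{lem : Z1 bound}.
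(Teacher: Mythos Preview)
Your proposal is correct and follows essentially the same route as the paper: split $AF(\oU)$ via the definition of $A$, reduce everything to $Y_0$ and the tail estimate $\|\pi^{>N}F(\oU)\|_{1,\omega}\le Y_1+Y_2+Y_3$, and control the latter via the splitting $\pi^{>N}=\pi^{>N_1}+\pi^{\leq N_1}\pi^{>N_2}$ together with the $C_0$-bound and the first-column identity $\|W_0(\mathcal{L}_k^{-1})_{col(0)}\|_1=1$ from \eqref{eq : formula for the first column}. The only cosmetic difference is that the paper first groups the four terms by their range (the $\pi^{\leq N}$ piece $A_0F(\oU)-A_0\mathcal{K}A_\infty F(\oU)$ and the $\pi^{>N}$ piece $A_\infty F(\oU)-A_\infty\mathcal{K}A_0 F(\oU)$) before factoring, whereas you bound the four terms individually and then collect; both arrive at the same expression for $Y$.
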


\begin{proof}
    First, using the definition of $A$, we obtain the following
    \begin{align*}
        \|AF(\oU)\|_{1,\omega} &= \|A_0 F(\oU) - A_0\mathcal{K} A_\infty F(\oU)\|_{1,\omega} + \|A_\infty F(\oU) - A_\infty\mathcal{K} A_0 F(\oU)\|_{1,\omega}\\
        &\leq \|A_0 F(\oU)\|_{1,\omega}(1+\|A_\infty \mathcal{K}\pi^{\leq N}\|_{1,\omega}) + \|A_\infty F(\oU)\|_{1,\omega}(1 + \|A_0\mathcal{K}\pi^{>N}\|_{1,\omega}).
    \end{align*}
    First, using \eqref{ineq : Z infinity Z N ZA0}, notice that 
    $
    \|A_\infty\|_{1,\omega} = \|\pi^{>N} - \pi^{>N}\K\pi^{>N}\|_{1,\omega} \leq 1+ Z_\infty.
    $
Moreover, using the above and \eqref{ineq : Z infinity Z N ZA0} again, we obtain that 
\begin{align*}
     \|AF(\oU)\|_{1,\omega} \leq  \|A_0 F(\oU)\|_{1,\omega}(1+ Z_{\leq N}(1+Z_{\infty})) + \|\pi^{>N} F(\oU)\|_{1,\omega}(1+Z_{\infty})(1 + Z_{A_0}).
\end{align*}
Since $\oU$ satisfies $\oU = \pi^{\leq N}\oU$ by assumption, we have that $\pi^{>N}\oU = 0.$ In particular, using \eqref{eq : identity on the projectors}, 
\small{
\begin{align}\label{eq : proof Y step 0}
    \|\pi^{>N}F(\oU)\|_{1,\omega} &= \frac{h}{2}\|\pi^{>N}\mathcal{L}^{-1}\left(\Lambda (Q(\oU)+\Phi) - \beta\right)\|_{1,\omega} \\
    &\leq  \frac{h}{2}\|\pi^{>N_1}\mathcal{L}^{-1}\Lambda (Q(\oU)+\Phi)\|_{1,\omega} +  \| \pi^{>N_1}\mathcal{L}^{-1}\beta\|_{1,\omega} + \big\|\pi^{\leq N_1}\pi^{>N_2}\mathcal{L}^{-1}\big(\frac{h}{2}\Lambda (Q(\oU)+\Phi) - \beta\big)\big\|_{1,\omega}.
\end{align}
}
\normalsize
  By definition of $\beta$ in \eqref{def : definition of beta}, we have 
    \begin{align}\label{eq : proof Y2}
        \|\pi^{>N_1}\mathcal{L}^{-1} \beta\|_{1,\omega} = \sum_{|k|_\infty > N_1} \omega_{0,k}|b_k|\|W_0\left(\mathcal{L}_k^{-1}\right)_{col(0)}\|_1 = \sum_{|k|_\infty > N_1} \omega_{0,k}|b_k| = Y_2,
    \end{align}
    where we also used \eqref{eq : formula for the first column} for the last identity.
    Moreover, using \eqref{eq : ineq C0 C1n CA0n}, we obtain
    \begin{align}\label{eq : proof Y1}
       \frac{h}{2} \|\pi^{>N_1}\mathcal{L}^{-1}\Lambda (Q(\oU)+\Phi)\|_{1,\omega} \leq   \frac{h}{2} 
 \sum_{|k|_\infty > N_1} \frac{C_0(\mu_k)}{2+\mu_k} \|W_0 (Q(\oU)_k+\Phi_k)\|_1 = Y_1. 
    \end{align}
   To conclude the proof, we combine \eqref{eq : proof Y step 0}, \eqref{eq : proof Y2} and \eqref{eq : proof Y1}, and get
    \[
     \|\pi^{>N}F(\oU)\|_{1,\omega} \leq Y_1 + Y_2 + Y_3. \quad \qedhere
    \]
\end{proof}

\begin{remark}\label{rem : quantity L Lambda V}
In the bounds $Y_0$ and $Y_2$ of the above lemma, we require the rigorous computation of $\mathcal{L}^{-1}V$
for  a given sequence $V = (V_{k,n})\in X$ with a finite number of nonzero coefficients.  Since $\mathcal{L}$ is diagonal by block in the Fourier component, this is equivalent to computing $\mathcal{L}_k^{-1}V_k$ for a finite number of $k$'s. We present in  Appendix \ref{sec : appendix} such a computation. 
\end{remark}

\begin{lemma}
 Let $\mathcal{D}_{C_0}$ be defined as in \eqref{def : operators DC0 and DC1} and let $r^*$ be fixed. Moreover, let $Z_{\leq N}, Z_{A_0}$ and $Z_{\infty}$ be defined  in Lemma \ref{lem : Z1 bound}. Let $0 \leq r \leq r^*$ and $V \in \overline{B_{r}(0)} \subset X$,  if 
 \small{
\begin{align}\label{def : definition of Z2}
    Z_{2}(r)r &\geq  \frac{h}{2}\|\mathcal{D}_{C_0}\left(DQ(\oU + V) - DQ(\oU)\right)\|_{1,\omega} \max\left\{\|A_0\|_{1,\omega}(1+Z_{\leq N}(1+Z_{\infty})), (1+Z_\infty)(1+Z_{A_0})\right\}
\end{align}
}
\normalsize
    then $\|A\left(DF(\overline{U} + V) - DF(\oU)\right)\|_{1,\omega} \leq Z_2(r)r$.
\end{lemma}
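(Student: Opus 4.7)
The plan is to factor out an operator norm of $A$ together with the $\mathcal{D}_{C_0}$-weighted contribution of $DQ(\oU+V) - DQ(\oU)$, and then bound $\|A\|_{1,\omega}$ column-wise using the structural identities that made the proof of Lemma~\ref{lem : Z1 bound} work. Since the only nonlinear term in $F$ is $Q$, we have the identity
\[
DF(\oU+V) - DF(\oU) = \tfrac{h}{2}\mathcal{L}^{-1}\mathbf{\Lambda}\bigl(DQ(\oU+V) - DQ(\oU)\bigr).
\]
Writing $\mathcal{L}^{-1}\mathbf{\Lambda} = (\mathcal{L}^{-1}\mathbf{\Lambda}\mathcal{D}_{C_0}^{-1})\mathcal{D}_{C_0}$ and invoking submultiplicativity with $\|\mathcal{L}^{-1}\mathbf{\Lambda}\mathcal{D}_{C_0}^{-1}\|_{1,\omega} \leq 1$ from \eqref{eq : identity on DC0 and DC1} yields
\[
\|A(DF(\oU+V)-DF(\oU))\|_{1,\omega} \leq \tfrac{h}{2}\,\|A\|_{1,\omega}\,\|\mathcal{D}_{C_0}(DQ(\oU+V)-DQ(\oU))\|_{1,\omega},
\]
so it suffices to show that $\|A\|_{1,\omega}$ is bounded by the displayed maximum.

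To bound $\|A\|_{1,\omega}$, I would use that by \eqref{eq : operator norm on X} the norm is computed column-wise, so $\|B\|_{1,\omega} = \max\{\|B\piN\|_{1,\omega},\ \|B\pi^{>N}\|_{1,\omega}\}$ for any $B$. Using $A_0 = A_0\piN$, $A_\infty = A_\infty\pi^{>N}$, and $A_0\pi^{>N} = A_\infty\piN = 0$, a direct expansion of \eqref{def : operator A} gives $A\piN = (I_d - A_\infty\K)A_0$ and $A\pi^{>N} = (I_d - A_0\K)A_\infty$, hence
\[
\|A\|_{1,\omega} = \max\bigl\{\|(I_d - A_\infty\K)A_0\|_{1,\omega},\ \|(I_d - A_0\K)A_\infty\|_{1,\omega}\bigr\}.
\]
Within each term, the two summands have disjoint ranges---one in the $\piN$-image and one in the $\pi^{>N}$-image---so their $\|\cdot\|_{1,\omega}$-norms add, giving, for instance, $\|(I_d-A_\infty\K)A_0\|_{1,\omega} \leq \|A_0\|_{1,\omega} + \|A_\infty\K A_0\|_{1,\omega}$. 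The factorization $A_\infty\K A_0 = A_\infty(\pi^{>N}\K\piN)A_0$, combined with $\|A_\infty\|_{1,\omega} \leq 1+Z_\infty$ (which follows from $A_\infty = \pi^{>N}(I_d-\K)\pi^{>N}$ and the first estimate of \eqref{ineq : Z infinity Z N ZA0}) and $\|\pi^{>N}\K\piN\|_{1,\omega} \leq Z_{\leq N}$, then produces the first entry $\|A_0\|_{1,\omega}(1+Z_{\leq N}(1+Z_\infty))$ of the max. The symmetric argument, now using $A_0\K A_\infty = (A_0\K\pi^{>N})A_\infty$ and $\|A_0\K\pi^{>N}\|_{1,\omega} \leq Z_{A_0}$, yields the second entry $(1+Z_\infty)(1+Z_{A_0})$.

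The main technical care is in the bookkeeping of the two distinct kinds of disjointness at play: the outer max arises from partitioning columns according to whether their index lies in $I^N$ or its complement, while the additive splitting inside each term comes from the ranges of $A_0(\cdot)$ and $A_\infty(\cdot)$ being supported on complementary index sets. Once these structural observations are in place, all remaining estimates are routine applications of submultiplicativity together with the bounds already established in \eqref{ineq : Z infinity Z N ZA0} and \eqref{eq : identity on DC0 and DC1}.
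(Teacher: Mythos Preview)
Your proposal is correct and follows essentially the same route as the paper: factor $DF(\oU+V)-DF(\oU)=\tfrac{h}{2}\mathcal{L}^{-1}\mathbf{\Lambda}(DQ(\oU+V)-DQ(\oU))$, insert $\mathcal{D}_{C_0}^{-1}\mathcal{D}_{C_0}$, use \eqref{eq : identity on DC0 and DC1}, and then bound $\|A\|_{1,\omega}$ via the column-wise split $\max\{\|A\piN\|_{1,\omega},\|A\pi^{>N}\|_{1,\omega}\}$ together with $A\piN=(I_d-A_\infty\K)A_0$, $A\pi^{>N}=(I_d-A_0\K)A_\infty$ and the estimates from \eqref{ineq : Z infinity Z N ZA0}. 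The only cosmetic difference is that you invoke the disjoint-range observation to justify the additive splitting, whereas the paper simply uses submultiplicativity in the form $\|(I_d-A_\infty\K)A_0\|_{1,\omega}\leq\|A_0\|_{1,\omega}(1+\|A_\infty\|_{1,\omega}\|\pi^{>N}\K\piN\|_{1,\omega})$; both lead to the same bound.
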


\begin{proof}
Let $0\leq r \leq r^*$ and let $V \in \overline{B_{r}(0)}$.  
Recalling  $DF(\oU + V) = I_d  + \frac{h}{2}\mathcal{L}^{-1}\mathbf{\Lambda} DQ(\oU + V)$, we get
\begin{align*}
    \|A\left(DF(\oU + V) - DF(\oU)\right)\|_{1,\omega}& = \frac{h}{2}\|A\mathcal{L}^{-1}\mathbf{\Lambda}\left(DQ(\oU + V) - DQ(\oU)\right)\|_{1,\omega}\\
    &\leq  \frac{h}{2}\|A\|_{1,\omega} \| \mathcal{L}^{-1}\mathbf{\Lambda} \mathcal{D}_{C_0}^{-1}\|_{1,\omega} \|\mathcal{D}_{C_0}\left(DQ(\oU + V) - DQ(\oU)\right)\|_{1,\omega}\\
    &\leq \frac{h}{2}\|A\|_{1,\omega} \|\mathcal{D}_{C_0}\left(DQ(\oU + V) - DQ(\oU)\right)\|_{1,\omega}
\end{align*}
where we have used \eqref{eq : identity on DC0 and DC1}. It remains to compute $\|A\|_{1,\omega}$. By definition of the operator norm on $X$ \eqref{eq : operator norm on X}, 
\begin{align*}
    \|A\|_{1,\omega} = \max\left\{ \|A\pi^{\leq N}\|_{1,\omega},  \|A\pi^{>N}\|_{1,\omega}\right\}.
\end{align*}
First focusing on $A\pi^{\leq N}$, we use \eqref{def : operator A} to get
\begin{align*}
    \|A\pi^{\leq N}\|_{1,\omega} = \|(I_d - A_\infty \K) A_0\|_{1,\omega} &\leq \|A_0\|_{1,\omega} (1 + \|A_\infty\|_{1,\omega}\|\pi^{>N}\K\pi^{\leq N}\|_{1,\omega})\\
    &\leq \|A_0\|_{1,\omega}(1+ (1+Z_\infty)Z_{\leq N}),
\end{align*}
and where we used \eqref{ineq : Z infinity Z N ZA0} for the last inequality.
Similarly, using \eqref{ineq : Z infinity Z N ZA0} again, we have
\begin{align*}
     \|A\pi^{\leq N}\|_{1,\omega} = \|(\pi^{>N} - A_0 \K )A_\infty\|_{1,\omega} \leq (1 + Z_{A_0})(1+Z_\infty). \qquad \qedhere
\end{align*}
\end{proof}

\subsection{A posteriori regularity} \label{sec:regularity}

In practice, we will look for a zero $\tilde{U}$ of $F$ in $X$, and establish its existence thanks to Theorem \ref{th : radii polynomial}. Notice that the space $X$ ensures some spatial regularity for the associated function $\tilde{u}$ of $\tilde{U}$, but it only provides continuity in time. In this section, we prove that $\tilde{u}$ is actually continuously differentiable in time, leading to a strong solution of the IVP.

Let $X^{1}$ be the Banach space given as 
\begin{equation}
X^{1} \bydef \left\{ U \in X:  \|U\|_{X^{1}} < \infty\right\}, \quad \text{where }
\|U\|_{X^{1}} \bydef \sum_{n \in \mathbb{N}_0}\sum_{\mathbb{Z}^m} (|n| + \omega_{k,n}) |U_{k,n}|,
\end{equation}
and $\omega_{k,n}$ is given in \eqref{def : weights}.
In particular, given $U \in X^{1}$, we have that $U$ has a function representation $u$, given as in \eqref{eq : expansion solution}, which is continuously differentiable in time. 
Looking for a zero $\tilde{U} \in X$ of $F$, we prove in the next lemma that $\tilde{U} \in X^{1}$, establishing the continuous differentiability of $\tilde{u}$ in time. 
\begin{lemma}\label{lem : regularity}
    Let $\tilde{U} \in X$ be a zero of $F$. Then, $\tilde{U} \in X^{1}$ and $\tilde{U}$ has a  function representation $\tilde{u}$ which is continuously differentiable in time. 
\end{lemma}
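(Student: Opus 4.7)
The plan is to read off the missing time regularity directly from the tridiagonal rows $n \geq 1$ of the equation $\mathcal{L}_k \tilde{U}_k + \tfrac{h}{2}\Lambda\bigl(Q_k(\tilde{U}) + \Phi_k\bigr) = \beta_k$. Since $\beta_{k,n}=0$ for $n \geq 1$ and the $n$-th row of $\mathcal{L}_k$ acts as $\mu_k\,\cdot_{n-1} + 2n\,\cdot_n - \mu_k\,\cdot_{n+1}$, isolating $2n\tilde{U}_{k,n}$ gives the pointwise bound
\begin{equation*}
2n|\tilde{U}_{k,n}| \leq \mu_k\bigl(|\tilde{U}_{k,n-1}|+|\tilde{U}_{k,n+1}|\bigr) + \tfrac{h}{2}\bigl(|Q_{k,n-1}(\tilde{U})|+|Q_{k,n+1}(\tilde{U})|\bigr) + \tfrac{h}{2}\bigl(|\Phi_{k,n-1}|+|\Phi_{k,n+1}|\bigr),
\end{equation*}
and summing over $n \geq 1$ yields, for each $k \in \mathbb{Z}^m$,
\begin{equation*}
\sum_{n\geq 1} n\,|\tilde{U}_{k,n}| \leq \mu_k\,\|\tilde{U}_k\|_1 + \tfrac{h}{2}\|Q_k(\tilde{U})\|_1 + \tfrac{h}{2}\|\Phi_k\|_1.
\end{equation*}

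I would then sum this inequality over $k$. Using $\|\tilde{U}_k\|_1 \leq \omega_{k,0}^{-1}\|\tilde{U}\|_{1,\omega}$ from \eqref{eq : indentity on the 1 omega norm}, together with the Banach algebra property of $X$ under the discrete convolution and Assumption~\ref{ass : assumption on Q} (ensuring $Q(\tilde{U}) \in X$), and the standing regularity of $\phi$, each term on the right-hand side is summable in $k$ provided $\sum_k \mu_k/\omega_{k,0} < \infty$. This last series converges because $\mu_k = -\tfrac{h}{2}(\lambda_k - \lambda_{\sup})$ grows only polynomially in $|k|_1$ (as $\mathbb{L}$ has finite order), whereas $\omega_{k,0} = (1+s_1|k|_1)^{s_2}\nu^{|k|_1}$ dominates this growth for admissible weight parameters $(s_1,s_2,\nu)$ (polynomially if $\nu = 1$ and $s_2$ is large enough, exponentially otherwise). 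Combined with $\tilde{U}\in X$ this produces $\sum_{k,n}(n+\omega_{k,n})|\tilde{U}_{k,n}|<\infty$, i.e.\ $\tilde{U}\in X^{1}$.

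For continuous differentiability of $\tilde{u}$ in time, I would revert to the underlying Picard formulation: the identity $F(\tilde{U})=0$ is equivalent to each Fourier component solving $\tilde{u}_k(t) = b_k + \int_{-1}^{t}\tfrac{h}{2} f_k(\tilde{u}(s))\,ds$ with $f_k(\tilde{u}) = \lambda_k \tilde{u}_k + (\mathbb{Q}\tilde{u})_k + \phi_k$. Since $\tilde{U}\in X$ implies that $\tilde{u}(t,x)$ is continuous on $[-1,1]\times\Omega$, the integrand $f_k(\tilde{u})$ is continuous in $t$ for every $k$; the fundamental theorem of calculus then delivers $\tilde{u}_k\in C^{1}([-1,1])$ with $\tilde{u}_k'(t) = \tfrac{h}{2} f_k(\tilde{u}(t))$. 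Combined with the summability in $k$ guaranteed by $\tilde{U}\in X^{1}$, this yields continuous differentiability of $\tilde{u}$ with respect to $t$.

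The delicate step is the uniform summability $\sum_k \mu_k/\omega_{k,0} < \infty$: this is the point where the polynomial growth of the spectrum of $\mathbb{L}$ meets the chosen spatial weight, and it is essentially the quantitative hypothesis on $(s_1,s_2,\nu)$ that any application of the framework has to verify in order to obtain a strong solution from an $X$-solution of $F=0$.
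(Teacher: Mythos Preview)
Your approach differs from the paper's and has a genuine gap. The paper does not argue row-by-row; instead it invokes the operator estimates on $\mathcal{L}^{-1}\mathbf{\Lambda}$ established in Section~\ref{sec : computation of C0 C1} (in particular the bound $\|W_0\mathcal{L}_\mu^{-1}\Lambda\mathcal{D}_\mu W_0^{-1}\|_1\leq C_1^*$ from Lemma~\ref{lem : C0 and C1}, together with the first-column estimates of Lemma~\ref{lem : norm of first column}) to conclude directly that $\mathcal{L}^{-1}\mathbf{\Lambda}$ maps $X$ into $X^1$ and that $\mathcal{L}^{-1}\beta\in X^1$. The point of those estimates is precisely that the smoothing of $\mathcal{L}_k^{-1}\Lambda$ is quantified \emph{uniformly in $k$}, so no hypothesis on the spatial weight is needed.

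By contrast, your row argument produces the factor $\mu_k\|\tilde{U}_k\|_1$, and to sum it you appeal to $\sum_k \mu_k/\omega_{k,0}<\infty$. This is not a standing assumption of the lemma: the space $X$ in \eqref{def : banach space X} allows any $s_1,s_2\geq 0$ and $\nu\geq 1$, and in every application of Section~\ref{sec : applications} the paper takes $X=\ell^1$, i.e.\ $\omega_{k,0}\equiv 1$. In that case $\mu_k\to\infty$ and both your summability condition and the weaker boundedness condition $\sup_k\mu_k/\omega_{k,0}<\infty$ fail. (Incidentally, the bound $\|\tilde{U}_k\|_1\leq\omega_{k,0}^{-1}\|\tilde{U}\|_{1,\omega}$ you use is wasteful---it forgets that $\sum_k\omega_{k,0}\|W_0\tilde{U}_k\|_1=\|\tilde{U}\|_{1,\omega}$---but even the sharper $\sup$ condition does not rescue the argument for $X=\ell^1$.) So as written, your proof establishes only a restricted version of the lemma, under an extra hypothesis on $(s_1,s_2,\nu)$ that the statement does not make and the applications do not satisfy.

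The fix is exactly what the paper does: rather than isolating $2n\tilde{U}_{k,n}$ from a single row, use the equation $\tilde{U}_k=-\tfrac{h}{2}\mathcal{L}_k^{-1}\Lambda(Q_k(\tilde{U})+\Phi_k)+\mathcal{L}_k^{-1}\beta_k$ and feed in the $k$-uniform decay $\|W_0\mathcal{L}_k^{-1}\Lambda W_0^{-1}\|_1\leq C_0(\mu_k)/(2+\mu_k)$ together with the $n$-weighted column bounds from Section~\ref{sec : computation of C0 C1}. This absorbs the growth of $\mu_k$ without any demand on $\omega$.
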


\begin{proof}
    Using \eqref{eq : proof Y2} and Lemma \ref{lem : norm of first column}, we obtain that $\mathcal{L}^{-1}\beta \in X^{1}$. Now, using Lemma \ref{lem : C0 and C1} and the definition of $C_1^*$, we have that $\mathcal{L}^{-1}\mathbf{\Lambda} V \in X^{1}$ for all $V \in X$. Consequently, $\mathcal{L}^{-1}\mathbf{\Lambda}Q(U) \in X^{1}$ for all $U \in X$ by Assumption \ref{ass : assumption on Q} and $\mathcal{L}^{-1}\mathbf{\Lambda}\Phi \in X^{1}$. Since $\tilde{U} $ is a zero of $F$, we obtain that 
\[
\tilde{U} = - \mathcal{L}^{-1}\Lambda (Q(\tilde{U} )+\Phi)  + \mathcal{L}^{-1}\beta  \in X^{1}. \qedhere
\]
\end{proof}

\begin{remark}\label{rem : regularity}
   If  $\lambda_k = \mathcal{O}(|k|_1^s)$ for some $s \in \mathbb{N}$ and if $U \in X$ is a zero of $F$ with $s_1>0$ and $s_2=s$ in \eqref{def : weights}, then $U$ is the unique zero of $F$ in $X$. This conclusion arises from the uniqueness in  Cauchy problems for parabolic PDEs with elliptic spatial linear part (see for instance \cite{evans2010partial} or \cite{mclean2000strongly}). 
   Moreover, suppose that $\phi=0$ and that $\mathbb{Q}(u)$ can be written as 
   \[
   \mathbb{Q}(u) = \mathbb{L}_Q \mathbb{Q}_0(u),
   \]
   where $\mathbb{L}_Q$ is a linear differential operator with constant coefficients of order smaller than $\mathbb{L}$ and $\mathbb{Q}_0$ only contains products of $u$ (without spatial derivatives involved). Let $\mathcal{L}_Q$ be the Fourier-Chebyshev representations of $\mathbb{L}_Q$, then given $k \in \mathbb{Z}^m$, there exists $q_k \in \mathbb{C}$ such that $(\mathcal{L}_QU)_k = q_k U_k$. In particular, using Assumption \ref{ass : assumption on Q} combined with \eqref{eq : ineq C0 C1n CA0n}, we obtain that there exists $C>0$ such that
   \[
   \|\mathcal{L}_k^{-1}\Lambda q_k\|_1 \leq \frac{C}{|k|_1}
   \]
   for all $k \in \mathbb{Z}^m$. In particular, if $b \in C^\infty(\Omega)$, a bootstrapping argument provides that zeros of $F$ in $\ell^1$ are equivalently smooth ($C^\infty$) solutions to \eqref{eq : initial pde}. We apply this reasoning in the applications of Section \ref{sec : applications}.
\end{remark}

\section{Computation of the functions  \boldmath$C_0 ~$\unboldmath  and \boldmath$C_{1,n}$\unboldmath}\label{sec : computation of C0 C1}

In this section we present our computations for the functions  $C_0$ and $C_{1,n}$ satisfying \eqref{eq : ineq C0 C1n CA0n}. In fact we separate this analysis into two parts : a first part handling a bounded range of values of $\mu$ and an asymptotic part handling values of $\mu$ big enough.

 Let $\mu \geq 0$ be fixed and define  $\mathcal{L}_\mu$ as 
\begin{align}\label{def : definition of L mu}
    \mathcal{L}_\mu \bydef -\mu   \Lambda +  D +   e_0 \theta^T
\end{align}
where $e_0 = (1, 0,0, \dots)^T$, $ \theta = (1,-2,2,-2,2,\dots)^T$. In particular $\mathcal{L}_k = \mathcal{L}_{\mu_k}$.

\subsection{Computation for \boldmath$\mu \in [0,\mu^*]$\unboldmath}\label{ssec : computation for a bounded range of mu}

For this first part, we consider that $\mu \in [0,\mu^*]$ for some $\mu^*$ of our choice. In this case, the computation of $C_0$ and  $C_{1,n}$ is achieved combining the approximation for $\mathcal{L}_k$ proposed in Section 3.1 in \cite{jacek_integration_cheb} and the arithmetic on intervals (e.g. see \cite{MR0657002,MR2807595}).

\begin{figure}[h!]
\begin{center}
\includegraphics[width=12cm]{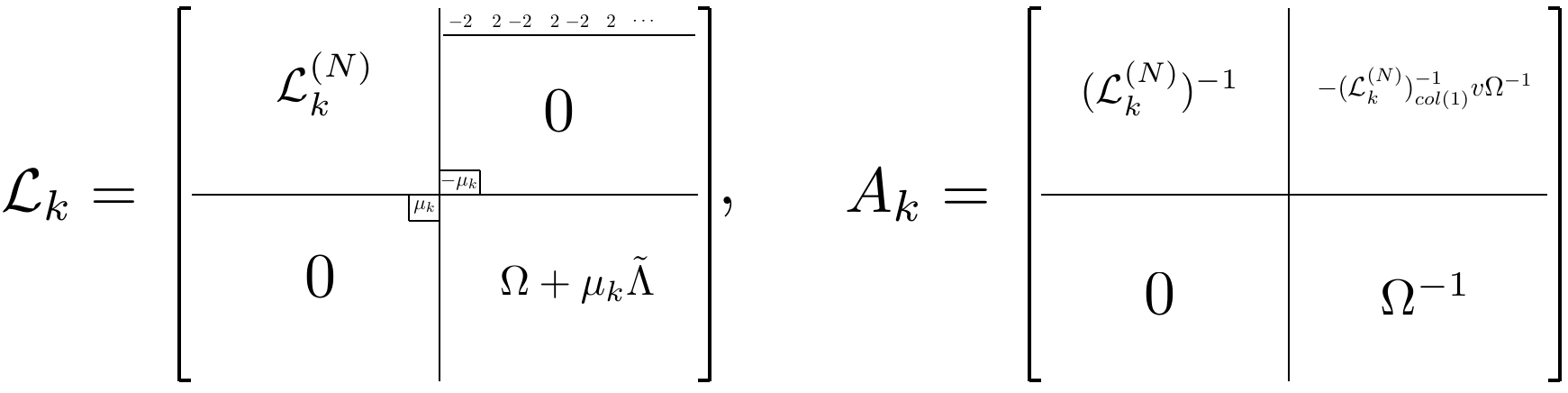}
\end{center}
\vspace{-.2cm}
\caption{The operator $\cL_k$ and its approximate inverse operator $A_k$ (figure taken from \cite{jacek_integration_cheb})}
\label{fig:operators_cL_kand_A_k}
\end{figure}

In particular, given $N$ even, we can decompose $\mathcal{L}_k$ as displayed in Figure \ref{fig:operators_cL_kand_A_k} where 
{\small 
\[
\tilde \Lambda \bydef
\begin{pmatrix} 
0&-1&0&\cdots&\ \\
1&0&-1&0&\cdots\\
0&1&0&-1&\cdots\\
&\ddots&\ddots&\ddots&\ddots&\ddots\\
&\dots&0&1&0&-1 \\
&\ &\dots&\ddots&\ddots&\ddots
\end{pmatrix}
\quad \text{and} \quad
\Omega \bydef
\begin{pmatrix} 
2(N+1)& 0 &0&\cdots  \\
0&2(N+2)&0&0 \\
0&0&2(N+3)&0 \\
\vdots &\vdots& &\ddots&\ddots  \\
\end{pmatrix}.
\]}

In fact, if $N$ is big enough, then one can construct an approximate inverse $A_k$ for $\mathcal{L}_k$ as presented in Figure \ref{fig:operators_cL_kand_A_k}. The value of $N$ can be determined using the following lemma, which we recall from \cite{jacek_integration_cheb}.

\begin{lemma} \label{lem:bounding_invLk_small_k}
Let $N \in \N$ be even and assume that $\cL_k^{(N)} \in M_{N+1}(\R)$ is invertible, and consider the operator $\cL_k$ as in Figure~\ref{fig:operators_cL_kand_A_k}. Let
\begin{align*}
\rho^{(1)} &\bydef \frac{1}{N+1} \left( \|W_0(\cL_k^{(N)})^{-1}_{col(1)}\|_{1} + 1 \right)
\\
\rho^{(2)} &\bydef  \| W_0(\cL_k^{(N)})^{-1}_{col(N+1)} + \frac{1}{N+2} W_0(\cL_k^{(N)})^{-1}_{col(1)} \|_{1} + \frac{1}{N+2} 
\\
\rho^{(3)} &\bydef \frac{2}{(N+1)(N+3)} \|W_0(\cL_k^{(N)})^{-1}_{col(1)} \|_{1} + \frac{1}{N+1} + \frac{1}{N+3} \\
\rho_k &\bydef \frac{|\mu_k|}{2} \max \{ \rho^{(1)}, \rho^{(2)}, \rho^{(3)} \},
\end{align*}
where $(\cL_k^{(N)})^{-1}_{col(1)} \in \R^{N+1}$ denotes the first column of the matrix $(\cL_k^{(N)})^{-1}$. 
%
    If $\rho_k<1$, then $\cL_k$ is  boundedly invertible with
\begin{equation}
 \cL_k^{-1} =  \sum_{j \ge 0} ( I - A_k \cL_k)^j  A_k \quad \text{ and } \quad  \|W_0(I_d - A_k \mathcal{L}_k^{-1})W_0^{-1}\|_1 \leq \rho_k.
\end{equation}
\end{lemma}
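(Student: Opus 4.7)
My approach is a standard Neumann series argument: the conclusion that $\cL_k^{-1} = \sum_{j \ge 0}(I_d - A_k\cL_k)^j A_k$ follows automatically once we establish the operator-norm bound $\|W_0(I_d - A_k\cL_k)W_0^{-1}\|_1 \le \rho_k < 1$ in the $W_0$-weighted Banach space $(\ell^1(\mathbb{N}_0), \|W_0 \cdot\|_1)$. So the entire technical content of the lemma is in bounding this quantity by $\rho_k$.

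The first step is to compute $I_d - A_k\cL_k$ explicitly using the block decomposition of Figure~\ref{fig:operators_cL_kand_A_k}. Splitting $\cL_k$ and $A_k$ into a finite upper-left $(N+1)\times(N+1)$ piece and an infinite lower-right tail --- with $A_k$ designed so that its upper block is exactly $(\cL_k^{(N)})^{-1}$ and its tail block approximately inverts $\Omega + \mu_k \tilde\Lambda$ via $\Omega^{-1}$ --- the product $A_k\cL_k$ differs from the identity only in finitely many columns, all concentrated near the boundary index $N$. The error comes from two sources: the coupling entries $\pm \mu_k$ of the tridiagonal $-\mu_k\Lambda$ term that cross the block boundary, and the residual $\mu_k \Omega^{-1}\tilde\Lambda$ inside the tail that $\Omega^{-1}$ cannot cancel. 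Every non-zero entry therefore carries a factor of $\mu_k$, which explains the common prefactor $|\mu_k|/2$ in $\rho_k$.

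Because the operator norm on a weighted $\ell^1$-space equals the supremum of weighted column sums, the next step is to compute the $W_0$-weighted $\ell^1$-norm of each of the handful of non-zero columns. Three distinct column-sum expressions emerge, matching $\rho^{(1)}$, $\rho^{(2)}$, and $\rho^{(3)}$ after extracting $|\mu_k|/2$. The tail factors $\frac{1}{N+1}$, $\frac{1}{N+2}$, $\frac{1}{N+3}$ come directly from the diagonal entries $2(N+1), 2(N+2), 2(N+3)$ of $\Omega$. The consistent appearance of the first column $(\cL_k^{(N)})^{-1}_{col(1)}$ in all three expressions reflects how the rank-one piece $e_0\theta^T$, which makes the first row of $\cL_k$ dense with entries $\pm 2$, propagates the boundary coupling back into the initial-condition row of the finite block through $(\cL_k^{(N)})^{-1}$. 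The appearance of $(\cL_k^{(N)})^{-1}_{col(N+1)}$ in $\rho^{(2)}$ comes from the direct coupling entry $-\mu_k$ at position $(N,N+1)$. Taking the max over the three column sums and multiplying by $|\mu_k|/2$ gives exactly $\rho_k$.

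The main obstacle is the careful bookkeeping: the $W_0$-weight is nontrivial only in the zeroth coordinate, but the rank-one structure $e_0\theta^T$ guarantees that this coordinate interacts with every other column through the inverse $(\cL_k^{(N)})^{-1}$; disentangling which contributions come from the $\theta^T$ propagation, which from the purely-tridiagonal $-\mu_k\Lambda$ coupling at the boundary, and which from the tail residual $\mu_k\Omega^{-1}\tilde\Lambda$, and then matching them column-by-column against $\rho^{(1)}, \rho^{(2)}, \rho^{(3)}$, is the delicate step. Once this matching is done and the hypothesis $\rho_k<1$ is invoked, the Neumann series converges in the weighted space and yields both the bounded invertibility of $\cL_k$ and the stated explicit series representation.
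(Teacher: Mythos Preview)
The paper does not actually prove this lemma: it is explicitly ``recalled from \cite{jacek_integration_cheb}'' and stated without proof, so there is no in-paper argument to compare your proposal against. Your Neumann-series strategy --- bound $\|W_0(I_d - A_k\cL_k)W_0^{-1}\|_1$ by computing the weighted column sums of the residual operator, then invoke $\rho_k<1$ to sum the geometric series --- is the standard route and is exactly what one expects the cited reference to do; your identification of the three column types with $\rho^{(1)}, \rho^{(2)}, \rho^{(3)}$ and of the $|\mu_k|/2$ prefactor is correct in outline.

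Two remarks. First, note that the displayed conclusion in the lemma statement almost certainly contains a typo: the bound should read $\|W_0(I_d - A_k\cL_k)W_0^{-1}\|_1 \le \rho_k$ (with $\cL_k$, not $\cL_k^{-1}$), since that is the quantity controlling the Neumann series you wrote down --- you implicitly corrected this. Second, your sketch is somewhat imprecise when you say the residual is ``concentrated near the boundary index $N$'': the tail part $-\mu_k\Omega^{-1}\tilde\Lambda$ contributes to \emph{every} column $j>N$, not just the boundary ones, and the dense first row $e_0\theta^T$ of $\cL_k$ also reaches every tail column. The reason only three distinct bounds $\rho^{(1)},\rho^{(2)},\rho^{(3)}$ suffice is that the resulting column sums are monotone in $j$ once you pass the boundary, so the worst cases occur at $j=N,\,N+1,\,N+2$ (or the analogous boundary indices). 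A complete proof would need to make this monotonicity explicit rather than assert finitely many non-zero columns.
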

Using the above lemma, given $\mu_1 \geq 0$ and $N$ sufficiently big, one can compute a value for $C_0(\mu_1)$ and $C_1(\mu_1)$ based on matrix computations. In particular such computations can easily be performed using the arithmetic on intervals. We expose in \cite{julia_cadiot} the numerical computational details for such analysis.

Now, given $\mu_2 \geq \mu_1$ with $\mu_2$ close to $\mu_1$, we would like to be able to deduce values for $C_0(\mu)$ and $C_1(\mu)$ for all $\mu \in [\mu_1, \mu_2]$. This is achieved in the next lemma.

\begin{lemma}\label{lem : range of values of mu}
    Let $0 \leq \mu_1 \leq \mu_2$ and let $n \in \mathbb{N}_0$.  If $C_0(\mu_1)$ and $C_{1,n}(\mu_1)$ satisfy \eqref{eq : ineq C0 C1n CA0n} for $\mu=\mu_1$ and if
    \begin{align}\label{eq : condition inverse}
        (\mu_2-\mu_1)\frac{C_0(\mu_1)}{2+\mu_1}< 1, 
    \end{align}
   then for all $\mu \in [\mu_1, \mu_2]$ we can define $C_0(\mu)$ and $C_{1,n}(\mu)$ as 
    \begin{align*}
        C_0(\mu) = \frac{\mu+2}{1-\frac{(\mu-\mu_1)C_0(\mu_1)}{\mu_1+2}} \frac{C_0(\mu_1)}{2+\mu_1} ~ \text{ and } ~
         C_{1,n}(\mu)  = \frac{n+ 1+ \sqrt{n^2+\mu^2}}{n+ 1+ \sqrt{n^2+\mu_1^2}} \frac{C_{1,n}(\mu_1)}{1-\frac{(\mu-\mu_1)C_0(\mu_1)}{\mu_1+2}} .
    \end{align*}
    In particular $C_0(\mu)$ and $C_{1,n}(\mu)$ satisfy \eqref{eq : ineq C0 C1n CA0n} for all $\mu \in [\mu_1,\mu_2]$.
\end{lemma}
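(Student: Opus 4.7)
The plan is to exploit the identity $\mathcal{L}_\mu = \mathcal{L}_{\mu_1} - (\mu-\mu_1)\Lambda$, which is immediate from the definition (\ref{def : definition of L mu}). Factoring $\mathcal{L}_{\mu_1}$ on the left and inverting yields the first-order resolvent identity
\begin{equation*}
\mathcal{L}_\mu^{-1}\Lambda = \bigl(I - (\mu-\mu_1)\mathcal{L}_{\mu_1}^{-1}\Lambda\bigr)^{-1}\mathcal{L}_{\mu_1}^{-1}\Lambda.
\end{equation*}
To justify this inversion, I would set $T \bydef W_0\mathcal{L}_{\mu_1}^{-1}\Lambda W_0^{-1}$, whose operator norm satisfies $\|T\|_1 \leq C_0(\mu_1)/(\mu_1+2)$ by hypothesis. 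The smallness condition (\ref{eq : condition inverse}) then forces $(\mu-\mu_1)\|T\|_1 < 1$ uniformly over $\mu \in [\mu_1,\mu_2]$, so a Neumann series delivers
\begin{equation*}
\bigl\|(I-(\mu-\mu_1)T)^{-1}\bigr\|_1 \leq \frac{1}{1 - (\mu-\mu_1)C_0(\mu_1)/(\mu_1+2)}.
\end{equation*}

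Conjugating the resolvent identity by $W_0$ produces $W_0\mathcal{L}_\mu^{-1}\Lambda W_0^{-1} = (I-(\mu-\mu_1)T)^{-1} T$; taking $\ell^1$ operator norms, combining the two bounds above, and multiplying through by $(2+\mu)$ yields exactly the formula announced for $C_0(\mu)$. For the column estimate leading to $C_{1,n}(\mu)$, I would apply the identity columnwise: since $W_0(\mathcal{L}_\mu^{-1}\Lambda)_{col(n)} = (I-(\mu-\mu_1)T)^{-1} W_0(\mathcal{L}_{\mu_1}^{-1}\Lambda)_{col(n)}$, taking $\ell^1$ norm, dividing by $2-\delta_{0,n}$, and invoking the hypothesis on $C_{1,n}(\mu_1)$ give a bound of the form $C_{1,n}(\mu_1)/[(n+1+\sqrt{n^2+\mu_1^2})(1-(\mu-\mu_1)C_0(\mu_1)/(\mu_1+2))]$; multiplying and dividing by $n+1+\sqrt{n^2+\mu^2}$ recasts this in the denominator form required by (\ref{eq : ineq C0 C1n CA0n}) at $\mu$, thereby reading off the claimed $C_{1,n}(\mu)$.

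I do not anticipate a genuine obstacle here, as the argument is essentially a first-order perturbation expansion in the parameter $\mu-\mu_1$. The main points to check are bookkeeping: that the conjugation by the diagonal multiplier $W_0$ commutes correctly with the Neumann inversion (routine, since $W_0$ and $W_0^{-1}$ are bounded), and that condition (\ref{eq : condition inverse}) bounds $(\mu-\mu_1)C_0(\mu_1)/(\mu_1+2)$ uniformly on all of $[\mu_1,\mu_2]$ rather than at the endpoint only, so that the Neumann series converges throughout the interval. A minor sanity check is that the prefactor $(n+1+\sqrt{n^2+\mu^2})/(n+1+\sqrt{n^2+\mu_1^2})$ is $\geq 1$ for $\mu \geq \mu_1$, so absorbing it into $C_{1,n}(\mu)$ is the correct (bound-weakening) direction.
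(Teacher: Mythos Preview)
Your proposal is correct and follows essentially the same approach as the paper: both exploit $\mathcal{L}_\mu = \mathcal{L}_{\mu_1}\bigl(I_d - (\mu-\mu_1)\mathcal{L}_{\mu_1}^{-1}\Lambda\bigr)$, invert via a Neumann series in the $W_0$-conjugated norm using the hypothesis on $C_0(\mu_1)$, and then read off the two bounds. The paper writes out the Neumann series $\sum_{j\ge0}(\mu-\mu_1)^j(\mathcal{L}_{\mu_1}^{-1}\Lambda)^j$ explicitly where you write $(I-(\mu-\mu_1)T)^{-1}$, but this is purely notational.
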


\begin{proof}
    Given $\mu \in [\mu_1,\mu_2]$, and using the definition of $\mathcal{L}_\mu$ in \eqref{def : definition of L mu}, we have that
    \begin{align*}
        \mathcal{L}_\mu = \mathcal{L}_{\mu_1} + (\mu_1-\mu) \Lambda = \mathcal{L}_{\mu_1} \left(I_d - (\mu-\mu_1) \mathcal{L}_{\mu_1}^{-1}\Lambda\right).
    \end{align*}
    In particular, we have $(\mu-\mu_1) \|W_0\mathcal{L}_{\mu_1}^{-1}\Lambda W_0^{-1}\|_1 \leq \frac{(\mu-\mu_1) C_0(\mu_1)}{2+\mu_1}$ by assumption on $C_0(\mu_1)$. Then under \eqref{eq : condition inverse} we obtain using a Neumann series argument that $\mathcal{L}_\mu$ is boundedly invertible. Moreover, we have 
    \begin{align*}
        \mathcal{L}_{\mu}^{-1} = \sum_{j=0}^\infty (\mu-\mu_1)^j (\mathcal{L}_{\mu_1}^{-1}\Lambda)^{j} \mathcal{L}_{\mu_1}^{-1}. 
    \end{align*}
    This implies that 
    \begin{align}\label{eq : proof neumann step 1}
         (\mu + 2)\|W_0 \mathcal{L}^{-1}_{\mu} \Lambda W_0^{-1}\|_1  &= (\mu + 2)\|W_0 \sum_{j=0}^\infty (\mu-\mu_1)^j (\mathcal{L}_{\mu_1}^{-1}\Lambda)^{j} \mathcal{L}_{\mu_1}^{-1} \Lambda W_0^{-1}\|_1 \nonumber \\
         & = (\mu + 2)\| \sum_{j=0}^\infty (\mu-\mu_1)^j (W_0\mathcal{L}_{\mu_1}^{-1}\Lambda W_0^{-1})^{j} W_0 \mathcal{L}_{\mu_1}^{-1} \Lambda W_0^{-1}\|_1 \nonumber\\
         & \leq \frac{(\mu+ 2)}{1-(\mu-\mu_1)\|W_0 \mathcal{L}^{-1}_{\mu_{1}} \Lambda W_0^{-1}\|_1}\| W_0 \mathcal{L}_{\mu_1}^{-1} \Lambda W_0^{-1}\|_1 \nonumber\\
         & \leq \frac{\mu+2}{1-\frac{(\mu-\mu_1)C_0(\mu_1)}{\mu_1+2}} \frac{C_0(\mu_1)}{2+\mu_1}.
    \end{align}
    Using a similar reasoning, we have 
\begin{align*}
     \frac{n+ 1+ \sqrt{n^2+\mu^2}}{2-\delta_{0,n}}\sum_{j = 1}^\infty \left|\left(W_0\mathcal{L}_\mu^{-1} \Lambda\right)_{j,n}\right| &=  \frac{n+ 1+ \sqrt{n^2+\mu^2}}{2-\delta_{0,n}}\sum_{j = 1}^\infty \left|\left(W_0\sum_{j=0}^\infty (\mu-\mu_1)^j (\mathcal{L}_{\mu_1}^{-1}\Lambda)^{j} \mathcal{L}_{\mu_1}^{-1} \Lambda\right)_{j,n}\right|\\
     &\leq \frac{n+ 1+ \sqrt{n^2+\mu^2}}{2-\delta_{0,n}} \frac{1}{1-\frac{(\mu-\mu_1)C_0(\mu_1)}{\mu_1+2}} \sum_{j = 1}^\infty \left|\left(W_0\mathcal{L}_{\mu_1}^{-1} \Lambda\right)_{j,n}\right|\\
     &\leq \frac{n+ 1+ \sqrt{n^2+\mu^2}}{n+ 1+ \sqrt{n^2+\mu_1^2}} \frac{1}{1-\frac{(\mu-\mu_1)C_0(\mu_1)}{\mu_1+2}} C_{1,n}(\mu_1).
\end{align*}
    where we used \eqref{eq : proof neumann step 1} for the last step.
\end{proof}

Combining Lemma \ref{lem:bounding_invLk_small_k} with Lemma \ref{lem : range of values of mu}, we are able to compute values for $C_0$ and $C_{1,n}$ for a range $[0,\mu^*]$. In practice, we compute an upper bound for $C_{1,n}(\mu_k)$ for a finite values of $n$'s and $k$'s and determine $C_1(\mu_k)$ such that
\[
C_1(\mu_k) \geq  \sup_{n \in \mathbb{N}_0} C_{1,n}(\mu_k).
\]
Then, using the above, we have that given a value for $C_1(\mu_1)$, we can define $C_1(\mu)$ as 
\[
C_1(\mu) = \frac{1+\mu}{1+\mu_1} \frac{C_1(\mu_1)}{1-\frac{(\mu-\mu_1)C_0(\mu_1)}{\mu_1+2}}
\]
for all $\mu \in [\mu_1,\mu_2]$ as long as \eqref{eq : condition inverse} is satisfied. This allows to compute a value for $C_{1,n}(\mu)$ for all $\mu \leq \mu^*$.
Such values are stored numerically and can easily be used when computing the bounds $Y, Z_1$ and $Z_2$ as exposed in the previous section. We display in Figure \ref{fig : C0 C1} the obtained graphs for $C_0$ and $C_1$ using $\mu^* = 3000.$ Note that the values of $C_0$ and $C_1$ for $\mu \geq 3000$ are obtained thanks to Lemma \ref{lem : C0 and C1}.
 \begin{figure}[h!]
  \centering
  \begin{minipage}{.52\textwidth}
   \centering
  \includegraphics[clip,width=1\textwidth]{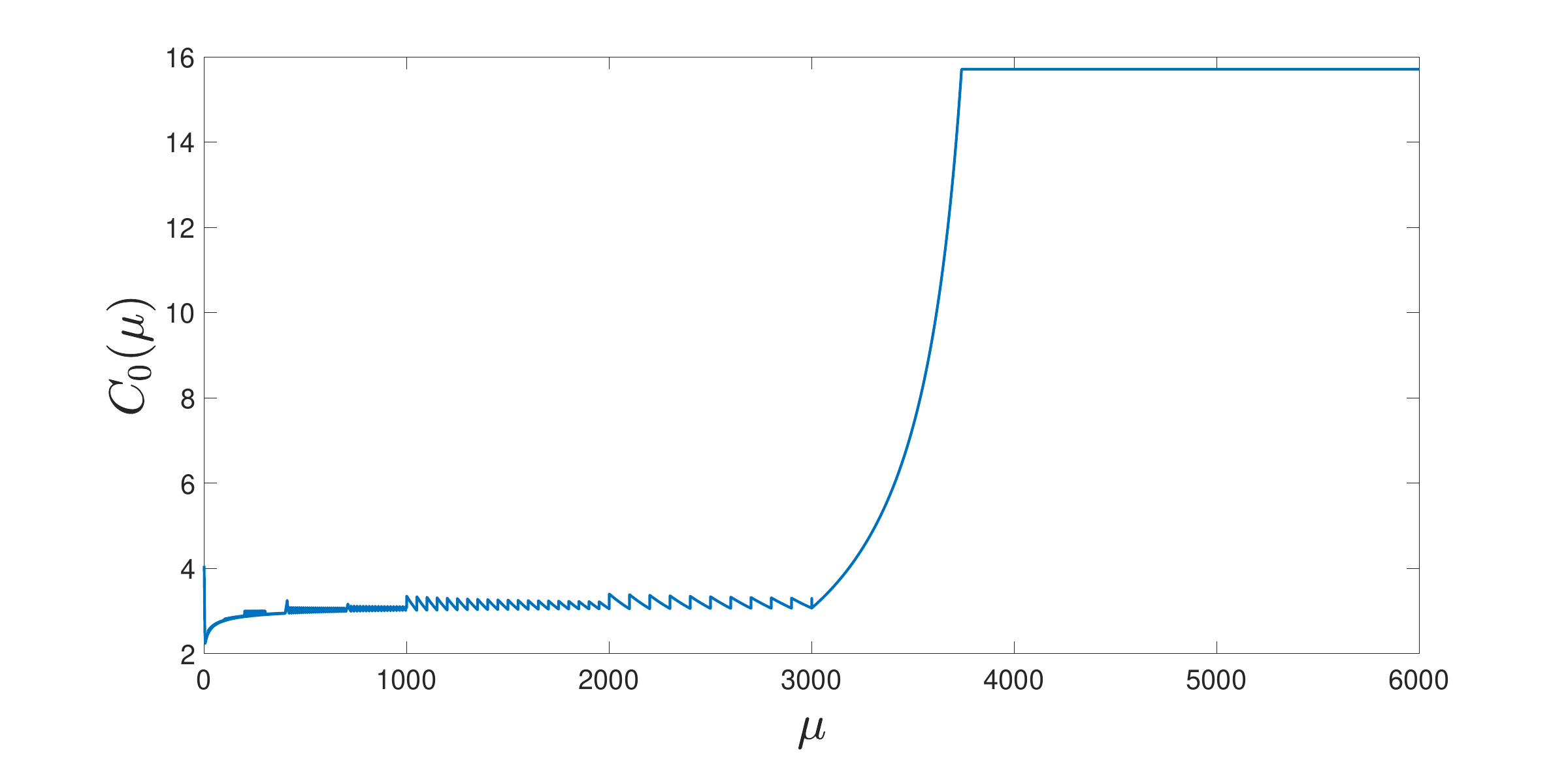}
  \end{minipage}%
  \begin{minipage}{.52\textwidth}
    \centering
   \includegraphics[clip,width=1\textwidth]{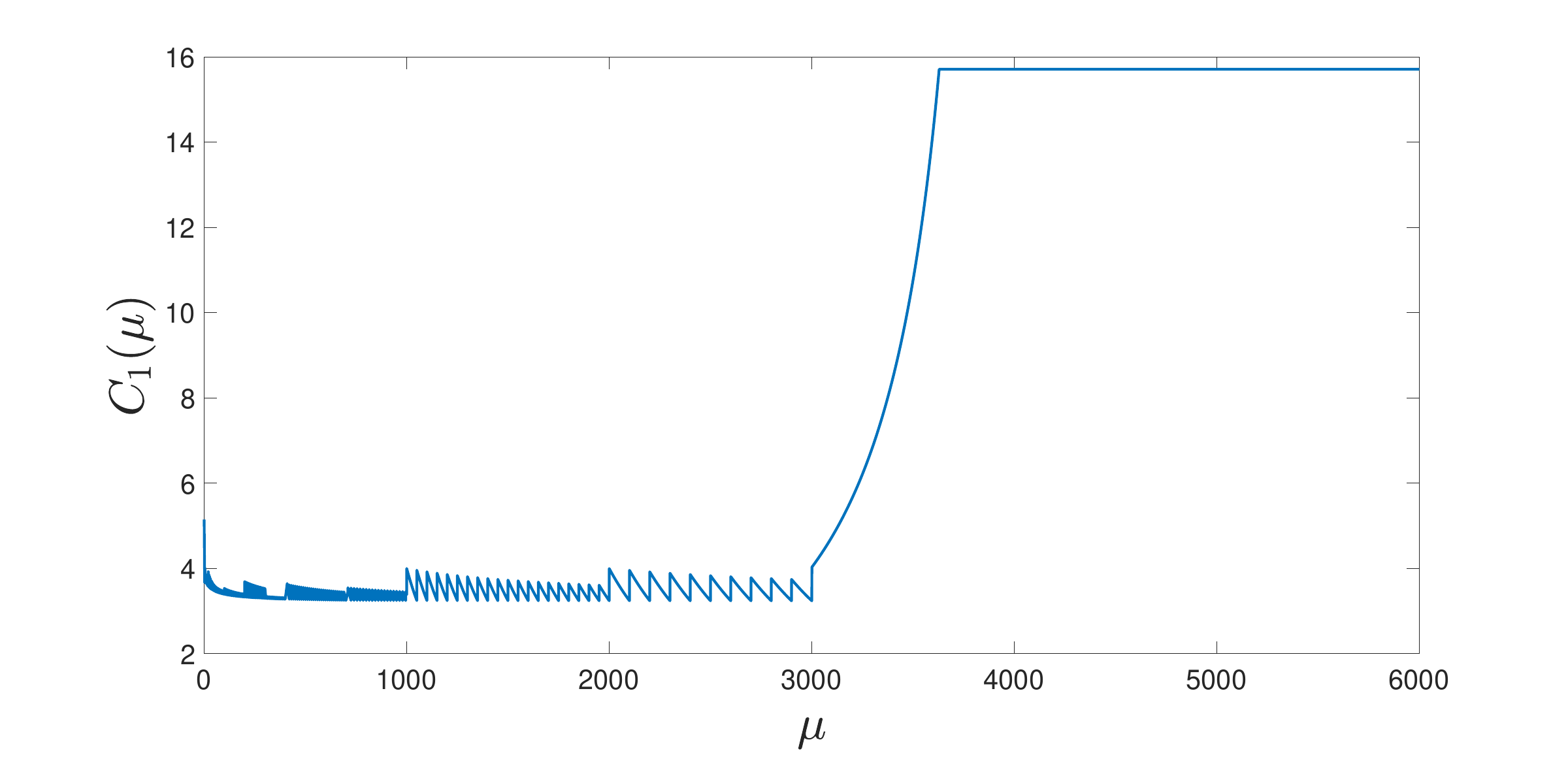}
  \end{minipage}
  \vspace{-.2cm}
  \caption{Representation of the functions $C_0$ (left) and $C_1$ (right).}
    \label{fig : C0 C1}
  \end{figure}
\subsection{Computation for \boldmath$\mu \in (\mu^*,\infty)$\unboldmath}


Let $\mu^* \geq 4$ be fixed. The goal of this section is to prove that there exists constants $C_0^*, C_1^* >0$ such that if we define $C_0(\mu) = C_0^*$ and $C_{1,n}(\mu) = C_1^*$ for all $\mu \geq \mu^*$ and all $n \in \mathbb{N}_0$, then $C_0$ and $C_{1,n}$ satisfy \eqref{eq : ineq C0 C1n CA0n} for all $\mu \geq \mu^*$. In order to do so, we need to analyze the structure of ${\mathcal{L}}_k$. 

Let $\mathcal{D}_\mu$ and $\mathcal{D}_{0,\mu}$ be defined as  
\begin{equation}\label{def : operator D mu}
    (\mathcal{D}_{\mu}U)_{n} = (n+1+\sqrt{n^2+\mu^2})U_n \text{ and }   (\mathcal{D}_{0,\mu}V)_{n} = (n+1+\sqrt{n^2+\mu^2})V_n
\end{equation}
for all $U \in \ell^1(\mathbb{N}_0)$ and all $V \in \ell^1(\mathbb{N})$. In other words, $\mathcal{D}_{0,\mu}$ is the restriction of $\mathcal{D}_\mu$ to $\mathbb{N}$.  Notice that 
\begin{align*}
     \frac{n+ 1+ \sqrt{n^2+\mu^2}}{2-\delta_{0,n}}\sum_{j = 1}^\infty \left|\left(W_0\mathcal{L}_\mu^{-1} \Lambda\right)_{j,n}\right| \leq \|W_0 \mathcal{L}_{\mu}^{-1}\Lambda \mathcal{D}_{\mu} W_0^{-1}\|_1
\end{align*}
for all $n \in \mathbb{N}_0$. In other words, by computing an upper bound for $\|W_0 \mathcal{L}_{\mu}^{-1}\Lambda \mathcal{D}_{\mu} W_0^{-1}\|_1$, we obtain an upper bound for $\sup_{n \in \mathbb{N}_0} C_{1,n}(\mu)$. We use such a property to compute $C_1^*.$

We start by decomposing $\mathcal{L}_\mu$ as follows
{\small
\begin{align*}
    \mathcal{L}_\mu = \begin{pmatrix}
         1 & -2   & 2    & \dots\\
        \mu &   ~\\
        0                & ~\\
        0                & ~ & T_\mu \\
        \vdots           & ~
\end{pmatrix}, \quad \text{where}~~
    T_\mu \bydef \begin{pmatrix}
        2  & -\mu     & ~      &~ &~\\
     \mu   & 4    & -\mu      &~ &~\\
     ~   & \ddots& \ddots  & \ddots  &~&~ \\
 ~ & ~ & \mu   & 2j    & -\mu &~\\
      ~   & ~ & ~ &  \ddots& \ddots  & \ddots
    \end{pmatrix} = -\mu \Lambda_{0} + D_0
\end{align*}}
and where $D_0$ is the diagonal matrix with entries $(2n)_{n \in \mathbb{N}}$ on its diagonal and $\Lambda_{0} = \frac{1}{\mu}\left(D_0-T_\mu\right).$

The inverse of the operator $\mathcal{L}_\mu$ can be expressed using the inverse of $T_\mu$. Moreover, the next lemma provides estimates on the quantities in \eqref{eq : ineq C0 C1n CA0n} depending on norms involving $T_{\mu}^{-1}$. Consequently, this reduces our estimations to the control of the inverse of an infinite tridiagonal matrix.

\begin{lemma}\label{lem : general bound Linv}
Let $\mu \geq \mu^*$ and define $\xi_\mu$ as 
\begin{equation}\label{def : definition of xi mu}
    \xi_\mu \bydef \sum_{i=1}^\infty (-1)^{i+1}\left(T_\mu^{-1}\right)_{i,1} = \|(T^{-1}_\mu)_{col(1)}\|_1.
\end{equation}
We obtain that
    $\mathcal{L}_\mu$ is invertible  and 
   \begin{equation}\label{eq : general inequalities for C0 and C1}
    \begin{aligned}
       (\mu+2) \|W_0 \mathcal{L}_\mu^{-1}  {\Lambda} W_0^{-1}\|_1 &\leq  \frac{\mu+2}{\mu }\max\left\{ 2, ~   \left(\frac{1}{1+2\mu \xi_\mu} + 2\right) \mu\|T^{-1}_\mu\Lambda_0\|_1\right\}\\
       \| W_0  \mathcal{L}_\mu^{-1}  \Lambda \mathcal{D}_{\mu} W_0^{-1}\|_1 &\leq\max\left\{ \frac{2(\mu+1)}{\mu}, ~   \left(\frac{1}{1+2\mu \xi_\mu} + 2\right) \|T^{-1}_\mu\Lambda_0 \mathcal{D}_{0,\mu}\|_1\right\}. 
    \end{aligned}
    \end{equation}  
    Moreover, 
    \begin{align}\label{eq : formula for the first column}
        (\mathcal{L}_{k}^{-1})_{col(0)} = \begin{pmatrix}
        \frac{1}{1+ 2\xi_\mu \mu} \\
        -\frac{\mu}{1+ 2\xi_\mu \mu}(T_\mu^{-1})_{col(1)}
    \end{pmatrix} ~~ \text{ and } ~~ \| W_0(\mathcal{L}_{k}^{-1})_{col(0)}\|_1 = 1.
    \end{align}
\end{lemma}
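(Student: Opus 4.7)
The plan is to treat $\mathcal{L}_\mu$ as a bordered block operator with inner block $T_\mu$ and invert it by a Schur complement. In block form, its first row is $(1,\theta_+^T)$ with $\theta_+^T = (-2,2,-2,\dots)$, its first column is $(1,\mu e)^T$ where $e$ denotes the first standard basis vector of $\ell^1(\mathbb{N})$, and its inner $(\mathbb{N}\times\mathbb{N})$ block is $T_\mu$. Assuming $T_\mu$ is boundedly invertible, the Schur complement is $S = 1 - \mu\theta_+^T (T_\mu^{-1})_{col(1)}$. Because $\theta_+^T (T_\mu^{-1})_{col(1)} = \sum_{i\ge 1}2(-1)^i(T_\mu^{-1})_{i,1} = -2\xi_\mu$ by the very definition of $\xi_\mu$ in \eqref{def : definition of xi mu}, this gives $S = 1 + 2\mu\xi_\mu$. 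Once $\xi_\mu \ge 0$ is known (so that $S>0$), the classical block-inverse formula yields invertibility of $\mathcal{L}_\mu$, and the first-column formula \eqref{eq : formula for the first column} drops out immediately: the $(0,0)$ entry is $S^{-1}$ and the tail is $-\mu S^{-1}(T_\mu^{-1})_{col(1)}$.

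For the norm $\|W_0(\mathcal{L}_\mu^{-1})_{col(0)}\|_1$, I would compute it directly as $S^{-1} + 2\mu S^{-1}\|(T_\mu^{-1})_{col(1)}\|_1$ and then invoke the equality $\xi_\mu = \|(T_\mu^{-1})_{col(1)}\|_1$ asserted in \eqref{def : definition of xi mu}, which collapses the right-hand side to $(1+2\mu\xi_\mu)/S = 1$. That equality is essentially the alternating-sign statement $(-1)^{i+1}(T_\mu^{-1})_{i,1}\ge 0$ for every $i\ge 1$; it also ensures $S>0$, so it does double duty in controlling the whole Schur-complement machinery.

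For the two inequalities in \eqref{eq : general inequalities for C0 and C1}, I will work column-by-column through $\|W_0 M W_0^{-1}\|_1 = \max_n \|W_0 M e_n\|_1/\|W_0 e_n\|_1$. Column $0$ of $\mathcal{L}_\mu^{-1}\Lambda$ equals $-(\mathcal{L}_\mu^{-1})_{col(1)}$; from the block-inverse formula one obtains $\|W_0(\mathcal{L}_\mu^{-1})_{col(1)}\|_1 = 4\xi_\mu/S$, and since $4\mu\xi_\mu \le 2(1+2\mu\xi_\mu)$ this is $\le 2/\mu$, producing the $2/\mu$ branch of the max. For $n\ge 1$, the block inverse shows that $(\mathcal{L}_\mu^{-1}\Lambda)_{col(n)}$ has top entry $-S^{-1}\eta_n$ and tail $(T_\mu^{-1}\Lambda_0)_{col(n)} + \mu S^{-1}(T_\mu^{-1})_{col(1)}\eta_n$, where $\eta_n = \theta_+^T (T_\mu^{-1}\Lambda_0)_{col(n)}$. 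The triangle inequality together with the H\"older bound $|\eta_n|\le 2\|(T_\mu^{-1}\Lambda_0)_{col(n)}\|_1$ and the $\xi_\mu$-identity gives $\|W_0(\mathcal{L}_\mu^{-1}\Lambda)_{col(n)}\|_1/2 \le (\tfrac{1}{S}+2)\|(T_\mu^{-1}\Lambda_0)_{col(n)}\|_1$, which is the second branch of the max. Taking the supremum over $n$ and multiplying by $\mu+2$ yields the first inequality, while inserting $\mathcal{D}_\mu$ and absorbing the scalar $(n+1+\sqrt{n^2+\mu^2})$ into the inner column through $\mathcal{D}_{0,\mu}$ produces the weighted version verbatim.

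The main obstacle is the alternating-sign identity $\xi_\mu = \|(T_\mu^{-1})_{col(1)}\|_1$: without it neither the clean value $1$ for the first-column norm nor the sign and size control on $S$ is available. I would establish it first, most likely from the three-term recurrence satisfied by the columns of $T_\mu^{-1}$, using the constant off-diagonals $\pm\mu$ and the strictly increasing diagonal $(2j)_{j\ge 1}$ of $T_\mu = D_0 - \mu\Lambda_0$ to make the checkerboard positivity $(-1)^{i+1}(T_\mu^{-1})_{i,1}\ge 0$ manifest, for example by inverting finite truncations and passing to the limit. Once that step is in hand, the rest of the proof reduces to block-inverse bookkeeping together with a handful of triangle and H\"older applications.
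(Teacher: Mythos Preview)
Your proposal is correct and follows essentially the same route as the paper: block decomposition of $\mathcal{L}_\mu$ with inner block $T_\mu$, Schur complement $S=1+2\mu\xi_\mu$, the standard $2\times 2$ block-inverse formula, and a column-by-column estimate of $W_0\mathcal{L}_\mu^{-1}\Lambda W_0^{-1}$ that separates the $n=0$ column (yielding $4\xi_\mu/S\le 2/\mu$) from the $n\ge 1$ columns (yielding the $(\tfrac{1}{S}+2)\|T_\mu^{-1}\Lambda_0\|_1$ branch). The only cosmetic difference is that the paper obtains the alternating-sign identity $\xi_\mu=\|(T_\mu^{-1})_{col(1)}\|_1$ directly from the explicit formula \eqref{eq : inverse for Tn} for the inverse of the finite tridiagonal truncation, which is precisely the ``finite truncation and pass to the limit'' argument you outline.
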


\begin{proof}
First, observe that the identity \eqref{def : definition of xi mu} is given by \eqref{eq : inverse for Tn}.
Then, notice that $\mathcal{L}_\mu$ can be written as 
\begin{align*}
    \mathcal{L}_\mu = \begin{pmatrix}
        1 & \theta_0^T\\
        \mu {e_0} & T_\mu
    \end{pmatrix}
\end{align*}
where $e_0 = (1,0,0,\dots)^T$ and $\theta_0 = (-2,2,-2,2, \dots)^T$. In fact, the Schur complement of $\mathcal{L}_\mu$ is given by $$1 - \mu \theta_0^T T_\mu^{-1}{e_0} = 1 + 2\mu \sum_{i=1}^\infty (-1)^{i+1} \left(T^{-1}_\mu\right)_{i,1} = 1 + 2 \mu \|(T^{-1}_\mu)_{col(1)}\|_1  = 1+ 2\xi_\mu \mu>0.$$ We  obtain that $\mathcal{L}_\mu$ is invertible and we have the following identity 
\begin{align}\label{eq : inverse of L second edition}
    \mathcal{L}_\mu^{-1} = \begin{pmatrix}
        \frac{1}{1+ 2\xi_\mu \mu} & -\frac{1}{1+ 2\xi_\mu \mu}\theta_0^T T^{-1}_\mu\\
        -\frac{\mu}{1+ 2\xi_\mu \mu}(T_\mu^{-1})_{col(1)}  & T^{-1}_\mu + \frac{\mu}{1+ 2\xi_\mu \mu} (T_\mu^{-1})_{col(1)}\theta_0^T T^{-1}_\mu
    \end{pmatrix}.
\end{align}
In particular, the above yields the formula for the first column of $\mathcal{L}_\mu^{-1}$ in \eqref{eq : formula for the first column}. The norm identity in  \eqref{eq : formula for the first column} is a direct consequence of the definition of $\xi_\mu$.
Now, note that $\Lambda$ can be decomposed as follows 
\begin{align*}
    \Lambda = \begin{pmatrix}
        0 & 0 \\
        -e_0 & \Lambda_0
    \end{pmatrix}.
\end{align*}
Now, using that 
$
\theta_0^T T^{-1}_\mu e_0 = \theta_0^T (T^{-1}_\mu)_{col(1)} = -2\xi_\mu,
$
we obtain
\begin{align}\label{eq : Linv times Lambda}
    \mathcal{L}_\mu^{-1}\Lambda = \begin{pmatrix}
         -\frac{2\xi_\mu}{1+2\xi_\mu\mu} & -\frac{1}{1+2\xi_\mu\mu}\theta_0^T T^{-1}_\mu\Lambda_0\\
         -(1 - \frac{2\mu\xi_\mu}{1+2\xi_\mu\mu}) (T^{-1}_\mu)_{col(1)} &  \left(I_d + \frac{\mu}{1+2\xi_\mu\mu} (T^{-1}_\mu)_{col(1)}\theta_0^T\right) T^{-1}_\mu\Lambda_0
     \end{pmatrix}
\end{align}
Therefore, using that $1-\frac{2\mu \xi_\mu}{1+2\xi_\mu\mu} = \frac{1}{1+2\xi_\mu\mu}$, it yields
\begin{align*}
   W_0 \mathcal{L}_\mu^{-1}\Lambda W_0^{-1} = \begin{pmatrix}
         -\frac{2\xi_\mu}{1+2\xi_\mu\mu} & -\frac{1}{2(1+2\xi_\mu\mu)}\theta_0^T T^{-1}_\mu\Lambda_0\\
         -\frac{2}{1+2\xi_\mu\mu}(T^{-1}_\mu)_{col(1)} &  \left(I_d + \frac{\mu}{1+2\xi_\mu\mu} (T^{-1}_\mu)_{col(1)}\theta_0^T\right) T^{-1}_\mu\Lambda_0
     \end{pmatrix}.
\end{align*}
Then, using that $\|\theta_0\|_{\infty} =2$, the above provides 
\begin{align*}
    \mu \|  W_0 \mathcal{L}_\mu^{-1}\Lambda W_0^{-1}\|_1 \leq \max\left\{\frac{4\mu\xi_\mu}{1+2\xi_\mu\mu}, ~ \left(\frac{1}{1+2\xi_\mu\mu} + \|I_d + \frac{\mu}{1+2\xi_\mu\mu} (T^{-1}_\mu)_{col(1)}\theta_0^T\|_1\right) \mu\|T^{-1}_\mu\Lambda_0\|_1 \right\}\\
    \leq \max\left\{2, ~ \left(\frac{1}{1+2\xi_\mu\mu} + \|I_d + \frac{\mu}{1+2\xi_\mu\mu} (T^{-1}_\mu)_{col(1)}\theta_0^T\|_1\right) \mu\|T^{-1}_\mu\Lambda_0\|_1 \right\}
\end{align*}
since $\frac{4\mu\xi_\mu}{1+2\xi_\mu\mu} \leq 2.$
Moreover, we get
\begin{align}\label{eq : ineq operator norm}
   \| I_d + \frac{\mu}{1+2\xi_\mu\mu} (T^{-1}_\mu)_{col(1)}\theta_0^T\|_1 
   &= \max_{j \in \mathbb{N}} \left\{ |1 + 2\mu (-1)^j (T^{-1}_\mu)_{j,1}| + \sum_{i\neq j} \frac{2\mu}{1+2\xi_\mu\mu} |(T^{-1}_\mu)_{i,1}| \right\} \nonumber\\
   &\leq 1 + \sum_{i \in \mathbb{N}} \frac{2\mu}{1+2\xi_\mu\mu} |(T^{-1}_\mu)_{i,1}|  = 1 + \frac{2\xi_\mu \mu}{1+2\xi_\mu \mu}
   \leq 2.
\end{align}
We conclude the proof using a similar reasoning for $ W_0  \mathcal{L}_\mu^{-1}  \Lambda \mathcal{D}_{\mu} W_0^{-1}$.
\end{proof}

The above lemma provides explicit estimates for \eqref{eq : ineq C0 C1n CA0n} if one is able to compute different matrix norms involving $T_\mu^{-1}$ (e.g. $\|T^{-1}_\mu \Lambda_0\|_1$ or $\| T^{-1}_\mu \Lambda_0 D_{0,\mu}\|_1$). This is achieved in the next section. In particular, we derive sharp estimates for the entries of  $T^{-1}_\mu.$

In the rest of Section \ref{sec : computation of C0 C1}, we fix some $\mu \geq \mu^* \geq 4$ and remove the dependency of $\mu$ of our objects of interest for simplicity. That is we write $T = T_\mu$ and $\xi = \xi_\mu$ for instance.

\subsection{Analysis of the tridiagonal infinite matrix \boldmath$T$\unboldmath}\label{sec : tridiagonal Tk}

We first consider a finite truncation of the matrix $T$, that is, given $n \in \mathbb{N}$, we define the $n \times n$ matrix 
{\small
\begin{equation}\label{def : T(n)}
T(n) \bydef \begin{pmatrix}
     2  & -\mu  & ~       & ~      &~ \\
    \mu  & 4  & -\mu    & ~      &~ \\
    ~     & \ddots& \ddots  & \ddots &~ \\
    ~     & ~     & \mu& 2(n-1)& -\mu \\
    ~     & ~     & ~       & \mu   & 2n
    \end{pmatrix}.
\end{equation}
}
Now, let $(d_j)$ and $(\delta_j)$ be the sequences defined as 
\begin{align}
        \delta_1 &= 2, ~~~~
        \delta_j = 2j + \frac{\mu^2}{\delta_{j-1}}, ~~~~ j = 2, \dots, n\\
    d_n &= 2n, ~~~~
        d_j = 2j + \frac{\mu^2}{d_{j+1}}, ~~~~ j = n-1, \dots, 1.
\end{align}
In particular, note that $d_i, \delta_i >0$ for all $i = 1, \dots, n$. Using \cite{DAFONSECA2000_explicit_inverses_of_some_tridiag} and the fact that $\mu>0$, we obtain that $T(n)$ has an  inverse given by
\begin{align}\label{eq : inverse for Tn}
    \left(T(n)^{-1}\right)_{i,j} =\displaystyle \begin{cases}
    \displaystyle \mu^{j-i} \frac{d_{j+1}\dots d_n}{\delta_i \dots \delta_n} &\text{ if } i \leq j\\
      \displaystyle (-1)^{i+j} \mu^{i-j} \frac{d_{i+1}\dots d_n}{\delta_j \dots \delta_n} &\text{ if } i > j
    \end{cases}
\end{align}
for all $i,j \in \{1, \dots,n\}.$   Moreover, using \cite{DAFONSECA2000_explicit_inverses_of_some_tridiag} again, we notice that 
\begin{align*}
    T(n) = U\mathcal{D}^{-1} U^\dagger
\end{align*}
where $\mathcal{D} = diag((d_i)_{i=1}^n)$ and 
{\small
\begin{align*}
U \bydef \begin{pmatrix}
     d_1  & -\mu  & ~       & ~      &~ \\
    ~  & d_2  & -\mu    & ~      &~ \\
    ~     &~& \ddots  & \ddots &~ \\
    ~     & ~     & ~& d_{n-1}& -\mu \\
    ~     & ~     & ~       & ~   & d_n
    \end{pmatrix},~~
    U^\dagger \bydef \begin{pmatrix}
     d_1  & ~  & ~       & ~      &~ \\
    \mu  & d_2  & ~    & ~      &~ \\
    ~     & \ddots& \ddots  & ~ &~ \\
    ~     & ~     & \mu& d_{n-1}& ~\\
    ~     & ~     & ~       & \mu   & d_n
    \end{pmatrix}.
\end{align*}
}
Consequently, the entries of $T(n)^{-1}$ can also be obtained using the recursive sequence $(d_j)$ only. In particular, $U$ and $U^\dagger$ can be inverted explicitly and direct computations lead to
\begin{align}\label{eq : inverse of T second expression}
    \left(T(n)^{-1}\right)_{i,j} = \begin{cases}
       \displaystyle \sum_{k=1}^i \frac{(-\mu)^{i-k}}{d_k \dots d_i} \frac{\mu^{j-k}}{d_{k+1}\dots d_j} &\text{ if } i \leq j\\
        \displaystyle \sum_{k=1}^j \frac{(-\mu)^{i-k}}{d_k \dots d_i} \frac{\mu^{j-k}}{d_{k+1}\dots d_j} &\text{ if } i > j.
    \end{cases}
\end{align}
In particular, we wish to construct explicit estimates on the elements of the sequence $(d_j)$ in order to control the entries of $T(n)^{-1}$. This is achieved in the next section.

\subsubsection{Explicit estimates for the entries of \boldmath$T(n)^{-1}$\unboldmath}

In this section we first derive a few properties of the sequence $(d_j)$ which will be useful in our analysis.
First, we prove that $(d_j)$ is increasing and we provide an enclosure for each element $d_j.$ In particular, we suppose in this section that 
\begin{equation}\label{eq : assumption on n}
    n > \frac{\mu^2}{4}.
\end{equation}

\begin{lemma}\label{lem : lemma on sequence dn}
   For all $1 \leq j \leq n-1$, we have 
    \begin{align}\label{eq : estimates on dn}
        0 \leq d_{j+1}-d_{j} \leq 2  ~~ \text{ and } ~~
        \alpha_j(\mu) \leq d_{j+1} \leq \alpha_{j+1}(\mu).
    \end{align}
    where 
    \begin{equation}\label{eq : def of alpha j}
        \alpha_j(\mu) \bydef j + \sqrt{j^2 + \mu^2}.
    \end{equation}
    Moreover,
    \begin{align*}
     \sqrt{4+\mu^2}  \leq d_1 \leq  1+ \sqrt{1+\mu^2}.
    \end{align*}
\end{lemma}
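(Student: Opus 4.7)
The plan is to exploit the simple observation that $\alpha_j(\mu) = j + \sqrt{j^2+\mu^2}$ is precisely the positive root of the quadratic $x^2 - 2jx - \mu^2 = 0$, or equivalently the unique positive fixed point of the strictly decreasing map $\phi_j(x) \bydef 2j + \mu^2/x$. Since the recursion reads $d_j = \phi_j(d_{j+1})$, the claimed enclosures should follow by a clean backward induction.

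First I would prove, by backward induction on $k$ from $n$ down to $2$, that $\alpha_{k-1}(\mu) \leq d_k \leq \alpha_k(\mu)$ (which is the second inequality of \eqref{eq : estimates on dn} after reindexing $k = j+1$). The base case $k=n$ uses $d_n = 2n$: the upper bound $2n \leq n + \sqrt{n^2+\mu^2}$ is immediate, while the lower bound $\alpha_{n-1}(\mu) \leq 2n$ reduces, after squaring, to $\mu^2 \leq 4n$, which is exactly the standing assumption \eqref{eq : assumption on n}. For the inductive step, I would use that $\phi_k$ is decreasing with fixed point $\alpha_k$: the hypothesis $d_{k+1} \geq \alpha_k$ together with $\phi_k(\alpha_k) = \alpha_k$ yields $d_k \leq \alpha_k$, while $d_{k+1} \leq \alpha_{k+1}$ gives $d_k \geq \phi_k(\alpha_{k+1}) = \alpha_{k+1} - 2$, and a one-line computation shows $\alpha_{k+1} - \alpha_{k-1} \geq 2$, completing $d_k \geq \alpha_{k-1}$.

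Next, the increment bound $0 \leq d_{j+1} - d_j \leq 2$ for $1 \leq j \leq n-1$ drops out of the enclosure above by writing
\[
d_{j+1} - d_j \;=\; d_{j+1} - \phi_j(d_{j+1}) \;=\; \frac{d_{j+1}^2 - 2j\,d_{j+1} - \mu^2}{d_{j+1}}.
\]
Since $\alpha_j$ is the positive root of $x^2 - 2jx - \mu^2$, the condition $d_{j+1} \geq \alpha_j$ forces the numerator to be nonnegative, giving $d_{j+1} - d_j \geq 0$. Rewriting the same expression as $2 + (d_{j+1}^2 - 2(j+1)d_{j+1} - \mu^2)/d_{j+1}$ and invoking $d_{j+1} \leq \alpha_{j+1}$ gives the upper bound $d_{j+1} - d_j \leq 2$.

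Finally, the bounds on $d_1$ follow directly from $d_1 = 2 + \mu^2/d_2$ and the enclosure $\alpha_1(\mu) \leq d_2 \leq \alpha_2(\mu)$ just established, once one uses the rationalization identities $\mu^2/\alpha_1 = \sqrt{1+\mu^2}-1$ and $\mu^2/\alpha_2 = \sqrt{4+\mu^2}-2$, both instances of $\mu^2/\alpha_j = \alpha_j - 2j$. The only genuinely delicate point of the whole argument is the base case of the backward induction, where the hypothesis $n > \mu^2/4$ is essential; beyond that the proof is simply a propagation of the fixed-point structure of the maps $\phi_j$ down the recursion.
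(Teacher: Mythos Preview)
Your proof is correct. The paper's argument inverts your order: it first establishes $0 \le d_{j+1}-d_j \le 2$ by backward induction (writing $d_{j+1}-d_j = 2 - \mu^2(d_{j+2}-d_{j+1})/(d_{j+1}d_{j+2})$ and using $d_{j+1}d_{j+2} = 2(j+1)d_{j+2}+\mu^2 \ge \mu^2$), and only then reads off the enclosure $\alpha_j \le d_{j+1} \le \alpha_{j+1}$ from that increment bound via $d_{j+1}^2 \ge d_{j+1}d_j = 2jd_{j+1}+\mu^2$ and $d_{j+1}(d_{j+1}-2) \le d_{j+1}d_j$. Your route---proving the enclosure first via the fixed-point structure of the decreasing maps $\phi_j$, then deducing the increment bound by evaluating the quadratic $x^2-2jx-\mu^2$ at $x=d_{j+1}$---is arguably tidier, since it makes the role of $\alpha_j$ as the positive fixed point of $\phi_j$ explicit and the monotonicity argument transparent. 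Both arguments rely on the standing assumption $n>\mu^2/4$ at exactly the same place (the base step of the backward induction) and derive the bounds on $d_1$ in the same way at the end.
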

\begin{proof}
Let $j \in \{1, \dots, n-1\}$, then observe that
$d_{j} = 2j + \frac{\mu^2}{d_{j+1}}$ and therefore
    \begin{align}\label{ineq : product dj dj+1}
        d_{j}d_{j+1} = 2j d_{j+1} + \mu^2 \geq \mu^2
    \end{align}
    since $d_{j+1} >0.$
    For $j=n-1$, we have
    \begin{align*}
        d_n - d_{n-1} = 2n - \left(2(n-1) + \frac{\mu^2}{2n}\right) =  2 - \frac{\mu^2}{2n}. 
    \end{align*}
    Therefore, it is clear that $0\leq d_n - d_{n-1}\leq 2$ if $n > \frac{\mu^2}{4}$, which is satisfied by \eqref{eq : assumption on n}.  Suppose that 
    \begin{align*}
        0 \leq d_{j+2}-d_{j+1} \leq 2
    \end{align*}
    and for some $1 \leq j \leq n-2$. Then, notice that
    \begin{align*}
        d_{j+1}-d_{j} = 2 - \mu^2 \frac{d_{j+2}-d_{j+1}}{d_{j+2}d_{j+1}}.
    \end{align*}
    This readily implies that $d_{j+1} -d_j \leq 2$ as $d_{j+2} \geq d_{j+1}$ by assumption. Moreover,
    \begin{align*}
        2 - \mu^2 \frac{d_{j+2}-d_{j+1}}{d_{j+2}d_{j+1}} \geq 2\left(1 - \frac{\mu^2}{d_{j+1}d_{j+2}}\right) \geq 0
    \end{align*}
    using \eqref{ineq : product dj dj+1}. This proves that 
    \[
    0 \leq d_{j+1} - d_j \leq 2
    \]
    and we obtain that $0\leq d_{j+1} - d_j \leq 2$ for all $j \in \{1, \dots, n-1\}$ by induction.
Now, given $j \in \{1,\dots,n-1\}$ and combining \eqref{ineq : product dj dj+1} with the fact that $d_j \leq d_{j+1}$, we get
\begin{align*}
   d_{j+1}^2 \geq d_{j+1}d_j = 2jd_{j+1} + \mu^2.
\end{align*}
Studying the positivity of the polynomial $x \to x^2 - 2jx - \mu^2$, we obtain that
\begin{align}\label{ineg : lower bound for dj}
    d_{j+1} \geq j + \sqrt{j^2+\mu^2}
\end{align}
since $d_{j+1}$ is positive. Similarly, using that $d_j \geq d_{j+1} -2$, we obtain that
\begin{align*}
    d_{j+1} \leq j+1 + \sqrt{(j+1)^2 + \mu^2}.
\end{align*}  
Using  $2+\frac{\mu^2}{2+\sqrt{4+\mu^2}} = \sqrt{4+\mu^2}$, the bounds for $d_1$ are obtained thanks to $d_1 = 2 + \frac{\mu^2}{d_2}$ along with \eqref{eq : estimates on dn}.
\end{proof}

\begin{lemma}\label{lem : estimates entries of Tinv with dn}
For all $i,j \in \{1, \dots, n\}$, we have
    \begin{align*}
     \left|\left(T(n)^{-1}\right)_{i,j}\right| \leq  \begin{cases}
         \displaystyle\frac{\mu^{j-i}}{d_i\dots d_j} &\text{ if } i \leq j\\
         \displaystyle\frac{\mu^{i-j}}{d_j\dots d_i} &\text{ if } i > j.
     \end{cases}
\quad \text{and} \quad 
     \left|\left(T(n)^{-1}\right)_{i,j}\right| \geq  \begin{cases}
         \displaystyle\frac{2(i-1)\mu^{j-i}}{d_{i-1}\dots d_j} &\text{ if } 2 \leq i \leq j\\
         \displaystyle\frac{2(j-1)\mu^{i-j}}{d_{j-1}\dots d_i} &\text{ if } 2 \leq j < i.
     \end{cases}
\end{align*}
\end{lemma}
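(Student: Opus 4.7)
The strategy is to exploit the explicit sum representation \eqref{eq : inverse of T second expression} of $(T(n)^{-1})_{i,j}$ and recognize it as an alternating series whose terms are strictly decreasing in magnitude. The upper and lower bounds will then follow from standard alternating series estimates applied to the first one or two terms.

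First I would fix $i \leq j$ and write the sum \eqref{eq : inverse of T second expression} as $(T(n)^{-1})_{i,j} = \sum_{k=1}^i a_k$, where $a_k \bydef \frac{(-\mu)^{i-k}}{d_k \cdots d_i}\cdot \frac{\mu^{j-k}}{d_{k+1} \cdots d_j}$. A direct computation shows that consecutive terms satisfy the ratio identity
\[
\frac{a_{k-1}}{a_k} \;=\; -\,\frac{\mu^2}{d_{k-1}\, d_k}, \qquad k = 2, \dots, i.
\]
By \eqref{ineq : product dj dj+1} (used in the proof of Lemma~\ref{lem : lemma on sequence dn}), one has $d_{k-1} d_k > \mu^2$ whenever $k \geq 2$, so $|a_{k-1}/a_k| < 1$. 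Consequently, the sum $\sum_{k=1}^{i} a_k$ is alternating in sign with strictly decreasing magnitudes, the largest term being $a_i = \mu^{j-i}/(d_i \cdots d_j) > 0$.

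From the standard alternating-series bracketing (the partial sums oscillate and are squeezed between any two consecutive ones), I immediately get
\[
a_i - |a_{i-1}| \;\leq\; (T(n)^{-1})_{i,j} \;\leq\; a_i.
\]
The upper bound gives $|(T(n)^{-1})_{i,j}| \leq \mu^{j-i}/(d_i \cdots d_j)$, which is the first claimed inequality. For the lower bound (requiring $i \geq 2$), I factor out $a_i$ and use the defining recursion $d_{i-1} d_i = 2(i-1)\, d_i + \mu^2$ to rewrite
\[
1 - \frac{\mu^2}{d_{i-1} d_i} \;=\; \frac{2(i-1)\, d_i}{d_{i-1}\, d_i} \;=\; \frac{2(i-1)}{d_{i-1}},
\]
which yields $|(T(n)^{-1})_{i,j}| \geq 2(i-1)\,\mu^{j-i}/(d_{i-1}\, d_i \cdots d_j)$, as claimed.

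The case $i > j$ is handled by exactly the same argument applied to the sum $\sum_{k=1}^{j} a_k$ (now truncated at $k=j$): the dominant term is $a_j$, the ratio formula $a_{k-1}/a_k = -\mu^2/(d_{k-1} d_k)$ is identical, and the alternating-series squeeze yields both bounds with the roles of $i$ and $j$ swapped. I do not foresee any real obstacle: the only subtlety is to verify the strict decrease of $|a_k|$, which is a direct consequence of \eqref{ineq : product dj dj+1} that has already been established in Lemma~\ref{lem : lemma on sequence dn}, and to simplify the ``second partial sum'' cleanly by exploiting the recursion defining $(d_j)$.
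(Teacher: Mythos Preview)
Your argument is correct and is in essence the same computation as the paper's, but packaged more directly. The paper derives a one-step diagonal recursion
\[
(T(n)^{-1})_{i+2,j+2} \;=\; \frac{\mu^{j-i}}{d_{i+2}\cdots d_{j+2}} \;-\; \frac{\mu^2}{d_{i+2}d_{j+2}}\,(T(n)^{-1})_{i+1,j+1},
\]
then invokes the positivity of $(T(n)^{-1})_{i+1,j+1}$ from the \emph{other} representation \eqref{eq : inverse for Tn} to get the upper bound, and iterates the recursion once more (again using positivity from \eqref{eq : inverse for Tn}) to obtain the lower bound after the same simplification $1-\mu^2/(d_{i-1}d_i)=2(i-1)/d_{i-1}$. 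Your alternating-series framing is exactly the same peeling of the leading term(s) of \eqref{eq : inverse of T second expression}, with the same ratio $-\mu^2/(d_{k-1}d_k)$ driving both the sign alternation and the strict decrease; the difference is that you obtain positivity of the full sum intrinsically from the alternating-series bracketing rather than importing it from \eqref{eq : inverse for Tn}. This makes your proof slightly more self-contained, at no cost.
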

\begin{proof}
    Let $j \in \{1,\dots,n\}$ and let $i \leq j$. Then
\begin{align}
\nonumber
   (T(n)^{-1})_{i+2,j+2} & = \sum_{k=1}^{i+2} \frac{(-\mu)^{i+2-k}}{d_k \dots d_{i+2}} \frac{\mu^{j+2-k}}{d_{k+1}\dots d_{j+2}} \\
   \nonumber
   & =  \frac{\mu^{j-i}}{d_{i+2} \dots d_{j+2}} 
 -\frac{\mu^2}{d_{i+2}d_{j+2}}\sum_{k=1}^{i+1} \frac{(-\mu)^{i+1-k}}{d_k \dots d_{i+2}} \frac{\mu^{j+1-k}}{d_{k+1}\dots d_{j+2}}\\
 &= \frac{\mu^{j-i}}{d_{i+2} \dots d_{j+2}} 
 -\frac{\mu^2}{d_{i+2}d_{j+2}}(T(n)^{-1})_{i+1,j+1}\label{eq : lemma ineq Tn step 1}.
\end{align}
    Since $i \leq j$, we know from \eqref{eq : inverse for Tn} that $(T(n)^{-1})_{i+1,j+1} \geq 0$. This implies that 
    $
    |(T(n)^{-1})_{i,j}| \leq  \frac{\mu^{j-i}}{d_{i} \dots d_{j}}
    $
    for all $i \leq j$. 
Now, going back to \eqref{eq : lemma ineq Tn step 1}, we get
\begin{align*}
    (T(n)^{-1})_{i+2,j+2} &=   \frac{\mu^{j-i}}{d_{i+2} \dots d_{j+2}} 
 -\frac{\mu^2}{d_{i+2}d_{j+2}}(T(n)^{-1})_{i+1,j+1}\\
 &=  \frac{\mu^{j-i}}{d_{i+2} \dots d_{j+2}} 
 -\frac{\mu^2}{d_{i+2}d_{j+2}}\left( \frac{\mu^{j-i}}{d_{i+1} \dots d_{j+1}} 
 -\frac{\mu^2}{d_{i+1}d_{j+1}}(T(n)^{-1})_{i,j} \right)\\
 & =  \frac{\mu^{j-i}}{d_{i+2} \dots d_{j+2}} \left(1-\frac{\mu^2}{d_{i+1}d_{i+2}}\right) + \frac{\mu^4}{d_{i+1}d_{i+2}d_{j+1}d_{j+2}}(T(n)^{-1})_{i,j}.
\end{align*}
Using that $1-\frac{\mu^2}{d_{i+1}d_{i+2}} = \frac{2(i+1)}{d_{i+1}}$ and that $(T(n)^{-1})_{i,j} \geq 0$, it yields
\[
|(T(n)^{-1})_{i,j}| \geq \frac{2(i-1)\mu^{j-i}}{d_{i-1} \dots d_{j}}
\]
  for all $2 \leq i \leq j$. A similar reasoning can be established in the case $i > j$.   
\end{proof}

With the above lemma available, we can provide explicit estimations for the entries of $T(n)^{-1}$. This is achieved in the lemma below.

\begin{lemma}\label{lem : estimates entries of Tinv with g}
    Let $i \geq 2$ and $j \geq 2$ such that $i \leq j$, then
    \begin{equation}
        \frac{2(i-1)}{\alpha_{i}(\mu)\alpha_{j}(\mu)} \exp\left(g\left(\frac{i}{\mu}\right)-g\left(\frac{j}{\mu}\right)\right) \leq   |(T(n)^{-1})_{i,j}| \leq 
             \frac{1}{\alpha_{j-1}(\mu)} \exp\left(g\left(\frac{i-2}{\mu}\right)-g\left(\frac{j-2}{\mu}\right)\right)  
    \end{equation}
    where
\begin{equation}   \label{eq : definition of the function g}
g(x) \bydef \mu\left(x\ln\left(\sqrt{x^2 +1} + x\right) - \sqrt{x^2+1} +1  \right).
\end{equation}
Similarly, for all $j \geq 2$, we have
 \begin{equation}
       \frac{1}{\alpha_{j}(\mu)} \exp\left(g\left(\frac{1}{\mu}\right)-g\left(\frac{j}{\mu}\right)\right)   \leq  |(T(n)^{-1})_{1,j}| \leq 
            \frac{\mu}{ \alpha_{j-1}(\mu)\sqrt{4+\mu^2}} \exp\left(-g\left(\frac{j-2}{\mu}\right)\right)  
    \end{equation}
    and $\frac{1}{\gamma_1(\mu)} \leq |(T(n)^{-1})_{1,1}| \leq  \frac{1}{\sqrt{4+\mu^2}}.$ Moreover, we have
    \begin{equation}\label{eq : ineq for g}
      \frac{x^2\mu}{2} \geq  g(x) \geq \begin{cases}
            \frac{\mu}{2}x &\text{ if }~ x \geq 1.1\\
            0.46 \mu x^2 &\text{ if } ~ 0.75 \leq x \leq 1.1\\
            0.479 \mu x^2 & \text{ if } ~ 0.5 \leq x \leq 0.75\\
            0.49 \mu x^2 &\text{ if } ~ 0 \leq x \leq 0.5.
        \end{cases}
    \end{equation}
    Finally, denoting $I \bydef \floor{\frac{3}{4}\mu} + 1$, if $j \geq I$, then we have the following estimate
    \begin{align}\label{eq : estimate with powers}
        |(T(n)^{-1})_{i,j}| \leq \frac{1}{\alpha_{I-1}(\mu)} \exp\left(g\left(\frac{i-2}{\mu}\right)-g\left(\frac{I-2}{\mu}\right)\right) \left(\frac{\mu}{\alpha_{I}(\mu)}\right)^{j-I}.
    \end{align}
\end{lemma}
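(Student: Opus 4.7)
The plan is to derive every estimate from a common scheme: combine the product bounds of Lemma \ref{lem : estimates entries of Tinv with dn} with the $\alpha_k$-envelope from Lemma \ref{lem : lemma on sequence dn}, convert the resulting product of $\mu/d_k$ factors into an exponential of a sum, and compare that sum to an integral of $g'$. First I would rewrite
\[ |(T(n)^{-1})_{i,j}| \leq \frac{\mu^{j-i}}{d_i \cdots d_j} = \frac{1}{d_j}\prod_{k=i}^{j-1} \frac{\mu}{d_k}, \]
use $d_k \geq \alpha_{k-1}(\mu)$, and recognize the identity
\[ \ln \frac{\mu}{\alpha_{k-1}(\mu)} = -\ln\!\left(\frac{k-1}{\mu}+\sqrt{\left(\tfrac{k-1}{\mu}\right)^2+1}\,\right) = -\frac{g'((k-1)/\mu)}{\mu}, \]
where $g'(x) = \mu\ln(x+\sqrt{x^2+1})$ is verified by a direct differentiation of \eqref{eq : definition of the function g}. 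The product therefore becomes $\exp\bigl(-\sum_{l=i-1}^{j-2} g'(l/\mu)/\mu\bigr)$.

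The central tool is then a Riemann-sum comparison. Since $g'$ is increasing on $[0,\infty)$, the step-$1/\mu$ sum $\sum_{l=i-1}^{j-2} g'(l/\mu)/\mu$ is bounded below by $\int_{(i-2)/\mu}^{(j-2)/\mu} g'(x)\,dx = g((j-2)/\mu)-g((i-2)/\mu)$, delivering the first upper bound. The matching lower bound for $i \geq 2$ is obtained by substituting the upper envelope $d_k \leq \alpha_k(\mu)$ instead, applying the reverse Riemann-sum inequality, and picking up the prefactor $2(i-1)/(\alpha_i(\mu)\alpha_j(\mu))$ from the $2(i-1)/(d_{i-1}d_j)$ term in Lemma \ref{lem : estimates entries of Tinv with dn}. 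The $i=1$ row reduces to the exact formula $(T(n)^{-1})_{1,j} = \mu^{j-1}/(d_1\cdots d_j)$ (from \eqref{eq : inverse of T second expression}); combining it with the sharper enclosure $\sqrt{4+\mu^2} \leq d_1 \leq 1+\sqrt{1+\mu^2}$ of Lemma \ref{lem : lemma on sequence dn} yields the two boundary-row estimates, and $i=j=1$ is immediate from $(T(n)^{-1})_{1,1}=1/d_1$.

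For the $g$-estimates in \eqref{eq : ineq for g}, the universal upper bound $g(x) \leq \mu x^2/2$ follows from $g'(x) \leq \mu x$ (since $\sinh x \geq x$ on $[0,\infty)$) integrated over $[0,x]$. For the piecewise quadratic lower bounds, I would observe that the ratio $h(x) \bydef g(x)/(\mu x^2)$ is monotonically decreasing on $(0,\infty)$ with $h(0^+)=1/2$ (verified by differentiation or by Taylor expansion of the numerator), so that $h(x) \geq h(b)$ on any interval $[a,b] \subset (0,\infty)$; the stated constants $0.49$, $0.479$, $0.46$ then follow by numerically evaluating $h$ at the right endpoints $0.5$, $0.75$ and $1.1$. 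The linear bound $g(x) \geq \mu x/2$ on $[1.1,\infty)$ follows by concatenating the numerical evaluation $g(1.1) \geq 0.55\mu$ with the trivial estimate $g'(x) \geq \mu/2$, valid on $[\sinh(1/2),\infty) \supset [1.1,\infty)$.

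Finally, the geometric-tail estimate \eqref{eq : estimate with powers} comes from the split
\[ \frac{\mu^{j-i}}{d_i\cdots d_j} \;=\; \left(\prod_{k=i}^{I-1}\frac{\mu}{d_k}\right)\cdot\frac{1}{d_I}\cdot\prod_{k=I+1}^{j}\frac{\mu}{d_k}. \]
The first factor is controlled exactly as in the main upper bound, producing $\exp(g((i-2)/\mu)-g((I-2)/\mu))$; the middle factor is bounded by $1/\alpha_{I-1}(\mu)$ using $d_I \geq \alpha_{I-1}(\mu)$; and for the tail product we use $d_k \geq \alpha_{k-1}(\mu) \geq \alpha_I(\mu)$ whenever $k \geq I+1$, yielding the factor $(\mu/\alpha_I(\mu))^{j-I}$. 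The choice $I = \floor{3\mu/4}+1$ is natural because $\mu/I \leq 4/3$ forces $\mu/\alpha_I(\mu) \leq 1/2$, ensuring strict geometric decay. I expect the main subtlety not to lie in any single step, but in carefully tracking the off-by-one shifts in the Riemann sums so that the arguments of $g$ align exactly with $(i-2)/\mu$, $(j-2)/\mu$ and $(I-2)/\mu$, together with the rigorous verification of the numerical constants in \eqref{eq : ineq for g}.
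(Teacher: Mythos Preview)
Your proposal is correct and follows essentially the same route as the paper: the product bounds from Lemma~\ref{lem : estimates entries of Tinv with dn}, the $\alpha_k$-envelope from Lemma~\ref{lem : lemma on sequence dn}, and the Riemann-sum comparison with the increasing integrand $\ln(x+\sqrt{x^2+1})=g'(x)/\mu$ are exactly what the paper does, including the same off-by-one bookkeeping and the same split for the geometric tail \eqref{eq : estimate with powers}. Your treatment of \eqref{eq : ineq for g} via the monotonicity of $g(x)/(\mu x^2)$ is in fact more explicit than the paper's, which dismisses that part as ``tedious but straightforward computations.''
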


\begin{proof}
For simplicity, denote $T(n)$ by $T$.
Suppose that $i \leq j$, then
    recall from Lemma~\ref{lem : estimates entries of Tinv with dn} that 
    \begin{align*}
        |T^{-1}_{i,j}| \leq \frac{\mu^{j-i}}{d_i \dots d_j}.
    \end{align*}
    Moreover, defining $x_l \bydef \frac{l}{\mu}$ and using Lemma~\ref{lem : lemma on sequence dn}, we obtain
    \begin{align}\label{eq : estimates of Tij with di}
         \frac{\mu^{j-i}}{d_i \dots d_j} \leq \frac{1}{d_j} \prod_{l=i-1}^{j-2} \frac{1}{x_l + \sqrt{x_l^2 + 1}},
    \end{align}
    where we use the convention that $\prod_{l=i-1}^{j-2} \frac{1}{x_l + \sqrt{x_l^2 +1}} =1$ if $i=j$.
    Using that $x \to \ln(x + \sqrt{x^2 +1})$ is an increasing function, we obtain that 
{\small \begin{align*}
    \sum_{l=i-1}^{j-2}\ln(x_l + \sqrt{x_l^2 +1}) \geq  \int_{i-2}^{j-2}\ln\left(\frac{x}{\mu} + \sqrt{\frac{x^2}{\mu^2}+1}\right) dx &=  \mu \int_{\frac{i-2}{\mu}}^{\frac{j-2}{\mu}}\ln\left(y + \sqrt{y^2+1}\right) dy
    = g(x_{j-2})-g(x_{i-2})
\end{align*}}
where  $g$ is defined in \eqref{eq : definition of the function g}. Using \eqref{eq : estimates of Tij with di}, we obtain that
\[
 |T^{-1}_{i,j}| \leq 
             \frac{1}{d_j} \exp\left(g\left(\frac{i-2}{\mu}\right)-g\left(\frac{j-2}{\mu}\right)\right).
\]
For the upper bound when  $i=1$, we proceed as above. Indeed,
\begin{align}\label{eq : estimate firt column above}
     |T^{-1}_{1,j}| = \frac{\mu^{j-1}}{d_1 \dots d_j} \leq \frac{\mu}{d_1d_j} \prod_{l=1}^{j-2} \frac{1}{x_l + \sqrt{x_l^2 + 1}} \leq \frac{\mu}{d_1 d_j}  \exp\left(-g\left(\frac{j-2}{\mu}\right)\right).
\end{align}
For the lower bound, notice that  
\begin{align}\label{eq : estimate first column below}
    |T^{-1}_{1,j}| = \frac{\mu^{j-1}}{d_1 \dots d_j} \geq  \frac{1}{d_j} \prod_{l=1}^{j-1} \frac{1}{x_l + \sqrt{x_l^2 + 1}}
\end{align}
using Lemma \ref{lem : estimates entries of Tinv with dn}.  Using that $x \to \ln(x + \sqrt{x^2 +1})$ is an increasing function, we obtain that 
{\small
\begin{align*}
    \sum_{l=1}^{j-1}\ln(x_l + \sqrt{x_l^2 +1}) \leq  \int_{1}^{j}\ln\left(\frac{x}{\mu} + \sqrt{\frac{x^2}{\mu^2}+1}\right) dx 
    = g(x_{j})-g(x_{1}).
\end{align*}}
This implies that 
$
    |T^{-1}_{1,j}| \geq \frac{1}{d_j} \exp( g(x_{1})-g(x_{j}) ).
$
A similar reasoning allows to conclude for the lower bound of $|T^{-1}_{i,j}|$ in the case $i \geq 2.$ We obtain the desired formulas for the estimates of $ |T^{-1}_{i,j}| $ using  Lemma \ref{lem : lemma on sequence dn}. The proof of \eqref{eq : ineq for g} is obtained using tedious but straightforward computations.
Finally, we focus on \eqref{eq : estimate with powers}. We notice that 
\begin{align*}
     |T^{-1}_{i,j}| \leq \frac{\mu^{j-i}}{d_i \dots d_j} =   \frac{\mu^{I-i}}{d_i \dots d_I} \frac{\mu^{j-I}}{d_{I+1} \dots d_j} \leq   \frac{1}{\alpha_{I-1}(\mu)} \exp\left(g\left(\frac{i-2}{\mu}\right)-g\left(\frac{I-2}{\mu}\right)\right) \frac{\mu^{j-I}}{d_{I+1} \dots d_j}
\end{align*}
using the above computations. Then, for all $l \geq I+1$, we have 
\begin{align*}
    \frac{\mu}{d_l} \leq   \frac{\mu}{\alpha_{l-1}(\mu)} \leq \frac{\mu}{\alpha_I(\mu)}
\end{align*}
since $\alpha_l(\mu)$ is increasing in $l$. This concludes the proof.
\end{proof}

\begin{remark}
    From \eqref{eq : inverse for Tn} we have that
    \begin{align*}
        |(T(n)^{-1})_{i,j}| = |(T(n)^{-1})_{j,i}|.
    \end{align*}
    Consequently, the above lemma also provides estimates for the entries of $T(n)^{-1}$ when $i > j$.
\end{remark}

\subsection{Computation of explicit estimates}

\begin{lemma}\label{lem : norm of first column}
Let $N_2 \geq 0$, then we have the following inequalities
{\small \begin{align*}
\sum_{i=1}^\infty \left|(T^{-1})_{i,1}\right| &\geq \frac{\mu \exp(g(\frac{2}{\mu}))}{1+\sqrt{1+\mu^2}}  \left(\frac{\sqrt{\pi}}{\sqrt{2\mu}} - \frac{2}{\mu}\right)  \\
\sum_{i=N_2+1}^\infty \left|(T^{-1})_{i,1}\right| &\leq 
{\tiny
\begin{cases}
\frac{\exp\left(-g\left(\frac{N_2}{\mu}\right)\right)}{\sqrt{4+\mu^2}} +   \frac{\mu}{\sqrt{4+\mu^2}}\left(\exp\left(\frac{0.49N_2^2}{\mu}\right) \min\left\{\frac{1}{2} ; \frac{\sqrt{\pi}}{2\sqrt{0.49\mu}}\right\} + 0.6\exp(-0.115\mu) + \frac{2\exp(-0.55\mu)}{\mu}\right), &\text{ if } \frac{N_2}{\mu} \leq \frac{1}{2}\\
\frac{\exp\left(-g\left(\frac{N_2}{\mu}\right)\right)}{\sqrt{4+\mu^2}} +   \frac{\mu}{\sqrt{4+\mu^2}}\left(\exp\left(\frac{0.46N_2^2}{\mu}\right) \min\left\{0.6 ; \frac{\sqrt{\pi}}{2\sqrt{0.46\mu}}\right\} + \frac{2\exp(-0.55\mu)}{\mu}\right), &\text{ if } \frac{1}{2} \leq \frac{N_2}{\mu} \leq 1.1\\
\frac{\exp\left(-g\left(\frac{N_2}{\mu}\right)\right)}{\sqrt{4+\mu^2}} +   \frac{2\exp(-0.55N_2)}{\sqrt{4+\mu^2}}, &\text{ if } 1.1 \leq \frac{N_2}{\mu}
\end{cases}
}
\\
\sum_{i=N_2+1}^\infty 2i \left|(T^{-1})_{i,1}\right| &\leq 
{\tiny
\begin{cases}
\frac{\mu}{0.479d_1} \left( e^{-0.479\frac{N_2^2}{\mu}} -  e^{-0.479\frac{9\mu}{16}}\right) +  \frac{2(N_2+1)e^{-\frac{0.479N_2^2}{\mu}}}{d_1} +  \frac{\sqrt{\mu \pi}}{\sqrt{0.479}d_1}e^{-\frac{0.479N_2^2}{\mu}} + \frac{\mu}{d_1} e^{-0.479\frac{9\mu}{16}}, &\text{ if } \frac{N_2}{\mu} \leq \frac{3}{4}\\
\frac{\mu}{d_1} e^{-0.479\frac{9\mu}{16}} \left(\frac{\mu}{1+2\mu}\right)^{N_2-\frac{3\mu}{4}} \frac{1+2\mu}{1+\mu}, &\text{ if }\frac{N_2}{\mu} > \frac{3}{4}.
\end{cases}
}
\end{align*}}
\end{lemma}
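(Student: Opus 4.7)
The plan is to derive all three bounds from the sharp entry-wise estimates on $T_\mu^{-1}$ of Lemma \ref{lem : estimates entries of Tinv with g}, combined with the piecewise lower bounds on $g$ from \eqref{eq : ineq for g} and routine series-to-integral comparisons for the monotone function $e^{-g(x/\mu)}$. The symmetry $|(T^{-1})_{i,1}| = |(T^{-1})_{1,i}|$ coming from \eqref{eq : inverse for Tn} lets one work throughout with the first row, for which the exact formula $|(T^{-1})_{1,j}| = \mu^{j-1}/(d_1 \cdots d_j)$ is available from \eqref{eq : inverse of T second expression}.

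For the lower bound on $\xi = \|(T^{-1}_\mu)_{col(1)}\|_1$, I would use $d_k \leq \alpha_k(\mu)$ from Lemma \ref{lem : lemma on sequence dn} to bound $d_1\cdots d_j$ above by $d_1 \cdot \mu^{j-1} \prod_{k=2}^{j}(k/\mu + \sqrt{(k/\mu)^2+1})$. Taking logarithms turns the product into $\sum_{k=2}^{j}\operatorname{arcsinh}(k/\mu)$, which by monotonicity is bounded above by $\int_{2}^{j+1}\operatorname{arcsinh}(x/\mu)\,dx = (g((j+1)/\mu)-g(2/\mu))/\mu$, using the antiderivative identity $g'(x) = \mu \operatorname{arcsinh}(x)$. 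This gives the entry-wise lower bound $|(T^{-1})_{1,j}| \geq d_1^{-1}\exp(g(2/\mu) - g((j+1)/\mu))$. Applying the universal upper bound $g(x) \leq x^2\mu/2$ from \eqref{eq : ineq for g} and comparing the remaining sum to the Gaussian integral $\int_{3}^\infty e^{-x^2/(2\mu)}\,dx$, together with the elementary estimate $\int_0^c e^{-x^2/(2\mu)}\,dx \leq c$, reduces to $\int_0^\infty e^{-x^2/(2\mu)}\,dx = \sqrt{\pi\mu/2}$ minus an explicit correction, and $d_1 \leq \alpha_1(\mu) = 1 + \sqrt{1+\mu^2}$ produces the claimed prefactor.

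For the two tail upper bounds, the starting point is the estimate
\[
|(T^{-1})_{1,j}| \leq \frac{\mu}{\alpha_{j-1}(\mu)\sqrt{4+\mu^2}} \exp\!\left(-g\!\left(\tfrac{j-2}{\mu}\right)\right), \quad j \geq 2,
\]
from Lemma \ref{lem : estimates entries of Tinv with g}, combined with $\mu/\alpha_{j-1}(\mu) \leq 1$. Isolating the $j=N_2+1$ term (which yields the boundary contribution $e^{-g(N_2/\mu)}/\sqrt{4+\mu^2}$) and bounding the remaining tail by $\int_{N_2}^\infty e^{-g(x/\mu)}\,dx / \sqrt{4+\mu^2}$, I would split this integral at $x = \mu/2$, $x = 3\mu/4$, $x = 1.1\mu$, matching the four regimes of \eqref{eq : ineq for g}. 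On each subinterval where $g(x/\mu) \geq c x^2/\mu$ the standard estimate $\int_A^\infty e^{-cx^2/\mu}\,dx \leq \min\{\mu e^{-cA^2/\mu}/(2cA),\, \tfrac{1}{2}\sqrt{\pi\mu/c}\}$ supplies the Gaussian piece; on $x \geq 1.1\mu$ the linear lower bound $g(x/\mu) \geq x/2$ gives the pure exponential piece $2 e^{-0.55\mu}/\mu$. The three cases of the lemma correspond to which subinterval $N_2$ falls into; the constants $0.6 e^{-0.115\mu}$ and $2 e^{-0.55\mu}/\mu$ are the explicit values of the transition-zone integrals on $[\mu/2,3\mu/4]$ and $[3\mu/4,1.1\mu]$, evaluated at their endpoints.

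The weighted tail $\sum_{i > N_2} 2i\, |(T^{-1})_{i,1}|$ is handled the same way for $N_2/\mu \leq 3/4$, exploiting the fortuitous fact that $2x$ is precisely the derivative weight for $e^{-0.479 x^2/\mu}$: the antiderivative $-\tfrac{\mu}{0.479}\, e^{-0.479 x^2/\mu}$ is evaluated exactly at $N_2$ and $3\mu/4$, producing the first case in closed form (the residual tail past $3\mu/4$ absorbs into the $e^{-0.479\cdot 9\mu/16} = e^{-g(3/4)}$ constant). For $N_2/\mu > 3/4$, I would instead use the geometric-decay estimate \eqref{eq : estimate with powers} with $I = \floor{3\mu/4}+1$: this bounds $|(T^{-1})_{1,i}|$ for $i \geq I$ by a fixed constant times $(\mu/\alpha_I(\mu))^{i-I}$, and since $\alpha_I(\mu) \geq 1 + 2\mu$ the ratio is strictly below $1$; the sum $\sum_{i > N_2} 2i\,(\mu/\alpha_I(\mu))^{i-I}$ is a standard arithmetic-geometric series summable in closed form to the displayed expression. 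The main obstacle is purely the bookkeeping across the multiple case splits — matching the correct piecewise bound on $g$ to each subinterval, tracking the boundary contributions, and verifying that each $\min\{\cdot,\cdot\}$ in the lemma correctly merges the pointwise Gaussian bound with the total-mass bound $\tfrac{1}{2}\sqrt{\pi\mu/c}$; once these choices are fixed, each estimate reduces to a short explicit calculation.
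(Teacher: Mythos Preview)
Your proposal is correct and follows essentially the same strategy as the paper's proof: entry-wise bounds on $|(T^{-1})_{i,1}|$ obtained from the product formula $\mu^{i-1}/(d_1\cdots d_i)$ together with Lemma~\ref{lem : lemma on sequence dn}, a sum-to-integral comparison via the monotonicity of $e^{-g}$, and then the piecewise lower bounds \eqref{eq : ineq for g} to evaluate the resulting integrals; the weighted tail likewise combines the Gaussian antiderivative on $[N_2,3\mu/4]$ with the geometric estimate \eqref{eq : estimate with powers} beyond $I=\floor{3\mu/4}+1$. One small bookkeeping point: the bound you quote from Lemma~\ref{lem : estimates entries of Tinv with g} has argument $(j-2)/\mu$, which at $j=N_2+1$ yields $e^{-g((N_2-1)/\mu)}$ rather than the stated $e^{-g(N_2/\mu)}$; the paper instead re-derives the slightly sharper first-column bound $|T^{-1}_{i,1}|\le \tfrac{1}{\sqrt{4+\mu^2}}\,e^{-g((i-1)/\mu)}$ directly from the product (your exact formula), and this is what gives the index exactly right.
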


\begin{proof}
For the computation of the estimates for the norm of $(T^{-1})_{col(1)}$, we first derive new estimates for each 
 $T^{-1}_{i,1}$ based on Lemmas \ref{lem : estimates entries of Tinv with dn} and \ref{lem : estimates entries of Tinv with g}.  Using \eqref{eq : estimate firt column above}, we get
\begin{align}\label{eq : step 0 proof column 1}
     |T^{-1}_{i,1}| = \frac{\mu^{i-1}}{d_1 \dots d_i} \leq \frac{1}{d_1} \prod_{l=1}^{i-1} \frac{1}{x_l + \sqrt{x_l^2 + 1}} \leq \frac{1}{\sqrt{4+\mu^2}}  \exp\left(-g\left(\frac{i-1}{\mu}\right)\right)
\end{align}
for all $i \geq 1$.
Then, using \eqref{eq : estimate first column below}, we have  
\begin{align*}
    |T^{-1}_{i,1}| = \frac{\mu^{i-1}}{d_1 \dots d_i} \geq  \frac{1}{d_1} \prod_{l=2}^{i} \frac{1}{x_l + \sqrt{x_l^2 + 1}}
\end{align*}
using Lemma \ref{lem : estimates entries of Tinv with dn}.  Using that $x \to \ln(x + \sqrt{x^2 +1})$ is an increasing function, we obtain that 
\begin{align*}
    \sum_{l=2}^{i}\ln(x_l + \sqrt{x_l^2 +1}) \leq  \int_{2}^{i+1}\ln\left(\frac{x}{\mu} + \sqrt{\frac{x^2}{\mu^2}+1}\right) dx 
    = g(x_{i+1})-g(x_{2}).
\end{align*}
This implies that 
\begin{align}\label{eq : step 1 proof column 1}
    |T^{-1}_{1,j}| \geq \frac{1}{1+\sqrt{1+\mu^2}} \exp( g(x_{2})-g(x_{i+1}) ).
\end{align}
Now, going back to \eqref{eq : step 0 proof column 1} and using that $g$ is increasing, we have
\begin{align*}
       \sum_{i=N_2+1}^\infty |T^{-1}_{i,1}| &\leq  \frac{\exp\left(-g\left(\frac{N_2}{\mu}\right)\right)}{\sqrt{4+\mu^2}} +   \frac{\mu}{\sqrt{4+\mu^2}}\int_{\frac{N_2}{\mu}}^\infty \exp\left(-g(x)\right)dx.
\end{align*}
Now, if $\frac{N_2}{\mu} \leq \frac{1}{2}$, using \eqref{eq : ineq for g}, we get 
{\small
\begin{align*}
    \int_{\frac{N_2}{\mu}}^\infty {\exp\left(-g(x)\right)} dx &\leq \int_{\frac{N_2}{\mu}}^{\frac{1}{2}}{\exp\left(-0.49\mu x^2\right)} dx + \int_{\frac{1}{2}}^{1.1} {\exp\left(-0.46\mu x^2\right)}dx+ \int_{1.1}^\infty {\exp\left(-\frac{\mu x}{2}\right)} dx\\
    &\leq \int_{\frac{N_2}{\mu}}^{\frac{1}{2}}{\exp\left(-0.49\mu x^2\right)} dx + 0.6\exp(-0.115\mu) + \frac{2\exp(-0.55\mu)}{\mu}.
\end{align*}}
First, notice that 
{\small \[
\int_{\frac{N_2}{\mu}}^{\frac{1}{2}}{\exp\left(-0.49\mu x^2\right)} dx \leq \frac{1}{2}\exp\left(-\frac{0.49N_2^2}{\mu}\right).
\]}
Moreover,
{\small \begin{align*}
    \int_{\frac{N_2}{\mu}}^{\frac{1}{2}}{\exp\left(-0.49\mu x^2\right)} dx = \frac{1}{\sqrt{0.49\mu}}\int_{\frac{N_2}{\mu}\sqrt{0.49\mu}}^{\frac{\sqrt{0.49\mu}}{2}}{\exp\left(-x^2\right)} dx &\leq \frac{\exp\left(-\frac{0.49N_2^2}{\mu}\right)}{\sqrt{0.49\mu}}\int_{0}^\infty{\exp\left(- x^2\right)} dx\\
    &= \frac{\sqrt{\pi}}{2\sqrt{0.49\mu}}\exp\left(-\frac{0.49N_2^2}{\mu}\right).
\end{align*}}
Using the above, we get
{\small \[
    \int_{\frac{N_2}{\mu}}^\infty {\exp\left(-g(x)\right)} dx \leq \exp\left(\frac{-0.49N_2^2}{\mu}\right) \min\left\{\frac{1}{2} ~,~ \frac{\sqrt{\pi}}{2\sqrt{0.49\mu}}\right\} + 0.6\exp(-0.115\mu) + \frac{2\exp(-0.55\mu)}{\mu}.
\]}
Similarly, if $\frac{1}{2} < \frac{N_2}{\mu} \leq 1.1$, then
{\small \[
    \int_{\frac{N_2}{\mu}}^\infty {\exp\left(-g(x)\right)} dx 
    \leq \exp\left(\frac{-0.46N_2^2}{\mu}\right) \min\left\{0.6 ~,~ \frac{\sqrt{\pi}}{2\sqrt{0.46\mu}}\right\} + \frac{2\exp(-0.55\mu)}{\mu}.
\]}
Finally, if $\frac{N_2}{\mu} > 1.1$, we get
{\small \[
    \int_{\frac{N_2}{\mu}}^\infty {\exp\left(-g(x)\right)} dx 
    \leq  \frac{2\exp(-0.55N_2)}{\mu}.
\]}


Then,  using \eqref{eq : step 1 proof column 1}, we have
{\small \[
\sum_{i=1}^\infty |T^{-1}_{i,1}| \geq \frac{\exp(g(\frac{2}{\mu}))}{1+\sqrt{1+\mu^2}} \sum_{i=1}^\infty \exp\left(-g\left(\frac{i+1}{\mu}\right) \right).
\]}
  Combining \eqref{eq : ineq for g} with the fact that $g$ is increasing, we get
  \begin{align*}
       \sum_{i=1}^\infty |T^{-1}_{i,1}| \geq \frac{\exp(g(\frac{2}{\mu}))}{1+\sqrt{1+\mu^2}}  \mu \int_{2}^\infty \exp(-g(x)) dx \geq \frac{\exp(g(\frac{2}{\mu}))}{1+\sqrt{1+\mu^2}}  \mu \int_{\frac{2}{\mu}}^\infty \exp\left(-\frac{x^2\mu}{2}\right) dx.
  \end{align*}
  Now, we have 
{\small \begin{align*}
      \int_{\frac{2}{\mu}}^\infty \exp\left(-\frac{x^2\mu}{2}\right) dx &= \frac{\sqrt{2}}{\sqrt{\mu}}\int_{\frac{\sqrt{2}}{\sqrt{\mu}}}^\infty \exp\left(-x^2\right) dx\\
      &= \frac{\sqrt{2}}{\sqrt{\mu}}\int_{0}^\infty \exp\left(-x^2\right) dx - \frac{\sqrt{2}}{\sqrt{\mu}}\int_0^{\frac{\sqrt{2}}{\sqrt{\mu}}} \exp\left(-x^2\right) dx
      \geq \frac{\sqrt{\pi}}{\sqrt{2\mu}} - \frac{2}{\mu}.
  \end{align*}}
  Finally, the estimates for the term $\sum_{n = N_2+1}^\infty 2i |T^{-1}_{i,1}|$ is based on \eqref{eq : estimate with powers} combined with \eqref{eq : ineq for g}, and follows similar computations as the one exposed above.
\end{proof}

\begin{remark}
    Notice that when $\mu$ is large, the above lemma provides
    \[
  \frac{\sqrt{\pi}}{\sqrt{2}}  \lesssim \sqrt{\mu} \sum_{i=1}^\infty \left|(T^{-1})_{i,1}\right| \lesssim \frac{\sqrt{\pi}}{\sqrt{2}\sqrt{0.98}},
    \]
    leading to a sharp enclosure of $\|(T^{-1})_{col(1)}\|_1$. In particular, this allows to estimate $\xi_\mu$ in Lemma \ref{lem : general bound Linv}.
\end{remark}

Recall that Lemma \ref{lem : general bound Linv} provides explicit upper bounds for the quantities in \eqref{eq : ineq C0 C1n CA0n} depending on $T^{-1}$. We want to use the results established in Section \ref{sec : tridiagonal Tk} in order to derive explicit values for $C_0^*$ and $C_1^*$. We present a series of technical lemmas allowing the computation of the quantities involved in \eqref{eq : general inequalities for C0 and C1}.

\begin{lemma}\label{lem : norm colums of lambda Tinv}
Let $j \in \mathbb{N}$, then we have the following identity
    \begin{align}
        \sum_{i=j+1}^\infty \left| \left(\Lambda_{0} T^{-1}\right)_{i,j}\right| = \left(T^{-1}\right)_{j,j} - \left(T^{-1}\right)_{j+1,j}.
    \end{align}
\end{lemma}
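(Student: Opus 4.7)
The plan is to exploit two structural properties of the entries of $T^{-1}$: the sign alternation along columns and the monotone decay of their absolute values. Since $\Lambda_0$ is tridiagonal with $+1$ above the diagonal and $-1$ below, one has
\[
(\Lambda_0 T^{-1})_{i,j} = (T^{-1})_{i+1,j} - (T^{-1})_{i-1,j}
\]
for every $i\geq 2$, so the statement reduces to evaluating the sum of these differences in absolute value.

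First, I would record the sign structure: from \eqref{eq : inverse for Tn}, $(T^{-1})_{j,j}>0$ and, for $i>j$, $(T^{-1})_{i,j}$ carries the sign $(-1)^{i-j}$. Consequently, for each $i\geq j+1$, the entries $(T^{-1})_{i+1,j}$ and $(T^{-1})_{i-1,j}$ share a common sign (even in the boundary case $i-1=j$, since $(T^{-1})_{j,j}>0$ and $(T^{-1})_{j+2,j}>0$). Next, I would establish monotone decay: the explicit formula gives
\[
\frac{|(T^{-1})_{i+1,j}|}{|(T^{-1})_{i,j}|} = \frac{\mu}{d_{i+1}},
\]
and Lemma~\ref{lem : lemma on sequence dn} yields $d_{i+1} \geq i + \sqrt{i^2+\mu^2} \geq \mu$. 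Hence $|(T^{-1})_{i,j}|$ is non-increasing in $i$ for $i\geq j$, and in particular tends to $0$.

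Combining these two facts, for $i\geq j+2$ the common-sign observation gives
\[
|(\Lambda_0 T^{-1})_{i,j}| = |(T^{-1})_{i-1,j}| - |(T^{-1})_{i+1,j}|,
\]
while the boundary term reads
\[
|(\Lambda_0 T^{-1})_{j+1,j}| = (T^{-1})_{j,j} - |(T^{-1})_{j+2,j}|.
\]
The series then decomposes naturally into two telescoping subseries, one over odd shifts $i-j\in\{1,3,5,\dots\}$ and one over even shifts $i-j\in\{2,4,6,\dots\}$. Using the decay to zero, both collapse, leaving
\[
\sum_{i=j+1}^\infty |(\Lambda_0 T^{-1})_{i,j}| = (T^{-1})_{j,j} + |(T^{-1})_{j+1,j}|.
\]
Since $(T^{-1})_{j+1,j}$ has sign $(-1)^{1}=-1$, one has $|(T^{-1})_{j+1,j}|=-(T^{-1})_{j+1,j}$, which yields the identity.

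The one subtlety to be handled carefully is that the explicit formulas in Section~\ref{sec : tridiagonal Tk} are stated for the finite truncation $T(n)$, whereas the statement concerns the infinite matrix $T=T_\mu$. Either the argument is carried out at finite level on $T(n)^{-1}$ and then the limit $n\to\infty$ is taken using entrywise convergence of $(T(n)^{-1})_{i,j}$ together with dominated convergence for the infinite sum, or one notes that both the alternating-sign pattern and the monotone decay of column magnitudes pass to the limit and applies the telescoping directly to $T^{-1}$. This passage to the limit is the only non-routine step; once the sign and decay properties are in place, the telescoping identity is transparent.
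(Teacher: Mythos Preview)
Your proof is correct and follows essentially the same telescoping argument as the paper. The only minor variation is that you deduce $|(T^{-1})_{i+1,j}|\le |(T^{-1})_{i-1,j}|$ via the one-step decay $d_{i+1}\ge\mu$ from Lemma~\ref{lem : lemma on sequence dn}, whereas the paper obtains it more directly from the recursion $d_i d_{i+1}=2i\,d_{i+1}+\mu^2\ge\mu^2$, which uses only positivity of $d_{i+1}$ and hence does not need the standing hypothesis $n>\mu^2/4$.
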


\begin{proof}
Let $n > \frac{\mu^2}{4}$ and let $i,j \in \{1, \dots, n\}$ with $i>j$. Then, using \eqref{eq : inverse for Tn}, we get
    \begin{align*}
        \left(T(n)^{-1}\right)_{i-1,j} - \left(T(n)^{-1}\right)_{i+1,j} = (-1)^{j+i+1}\mu^{i+1-j}\frac{d_{i+2}\dots d_n}{\delta_{j}\dots \delta_n}\left(\frac{d_id_{i+1}}{\mu^2}- 1\right).
    \end{align*}
    Recalling that $d_i = 2i + \frac{\mu^2}{d_{i+1}}$, we have that 
    $
        \frac{d_id_{i+1}}{\mu^2}-1 = \frac{2id_{i+1}}{\mu^2}>0. 
    $
    Taking the limit as $n \to \infty$, we obtain that $(-1)^{i+j+1}\left(\left(T^{-1}\right)_{i-1,j} - \left(T^{-1}\right)_{i+1,j}\right) \geq 0$ for all $i > j$.
   Consequently, using the above and a  telescopic series argument, we have
      \begin{align*}
        \sum_{i=j+1}^\infty \left| \left(\Lambda_{0} T^{-1}\right)_{i,j}\right|   & = \sum_{i=j+1}^{\infty}(-1)^{i+j+1}\left(\left(T^{-1}\right)_{i-1,j} - \left(T^{-1}\right)_{i+1,j}\right) \\
        & = \left(T^{-1}\right)_{j,j} - \left(T^{-1}\right)_{j+1,j}. \quad \qedhere
    \end{align*}
\end{proof}

\begin{lemma}\label{lem : operator norm Tinv lambda}
Let $j \in \mathbb{N}$ and let $\mu \geq 3000$, then
    \begin{align*}
       \|(T^{-1}\Lambda_0)_{col(j)}\|_1 \leq  \frac{7.82}{j-1+\sqrt{(j-1)^2+\mu^2}}
    \end{align*}
\end{lemma}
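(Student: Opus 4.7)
The approach is to exploit the algebraic identity $T_\mu = D_0 - \mu \Lambda_0$, which follows directly from the defining relation $\Lambda_0 = \frac{1}{\mu}(D_0 - T_\mu)$. Left-multiplying by $T^{-1}$ and rearranging yields
\[
T^{-1}\Lambda_0 = \frac{1}{\mu}\left( T^{-1} D_0 - I \right),
\]
hence entrywise
\[
(T^{-1}\Lambda_0)_{i,j} = \frac{2j}{\mu}(T^{-1})_{i,j} - \frac{\delta_{i,j}}{\mu}.
\]
Taking the $\ell^1$ norm of the $j$-th column and using the triangle inequality gives
\[
\|(T^{-1}\Lambda_0)_{col(j)}\|_1 \leq \frac{2j}{\mu}\,\|(T^{-1})_{col(j)}\|_1 + \frac{1}{\mu},
\]
so the problem reduces to bounding $\|(T^{-1})_{col(j)}\|_1$.

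To estimate the column norm I would split
\[
\|(T^{-1})_{col(j)}\|_1 = |(T^{-1})_{1,j}| + \sum_{i=2}^{j}|(T^{-1})_{i,j}| + \sum_{i=j+1}^{\infty}|(T^{-1})_{i,j}|
\]
and apply the pointwise bounds of Lemma \ref{lem : estimates entries of Tinv with g}. For $2 \leq i \leq j$, use $|(T^{-1})_{i,j}| \leq \alpha_{j-1}(\mu)^{-1}\exp\bigl(g(\tfrac{i-2}{\mu}) - g(\tfrac{j-2}{\mu})\bigr)$; for $i=1$ use the dedicated bound; and for $i > j$ exploit the symmetry $|(T^{-1})_{i,j}| = |(T^{-1})_{j,i}|$. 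Using the monotonicity of $g$ and the piecewise inequalities \eqref{eq : ineq for g}, the finite sum and the tail sum are compared to Gaussian and exponential integrals, exactly as in the computations carried out in the proof of Lemma \ref{lem : norm of first column}; the geometrically-decaying tail for $i \geq \lfloor 3\mu/4\rfloor + 1$ is handled using the auxiliary estimate \eqref{eq : estimate with powers}.

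Combining both steps, the desired inequality is equivalent to
\[
2j\,\|(T^{-1})_{col(j)}\|_1 \leq 1 + \frac{7.82\,\mu}{\alpha_{j-1}(\mu)}.
\]
I would verify this by a case analysis on the ratio $j/\mu$: in the regime $j/\mu \leq 1/2$, the $0.49\mu x^2$ lower bound on $g$ from \eqref{eq : ineq for g} yields $\|(T^{-1})_{col(j)}\|_1 = O(\mu^{-1/2})$ so that the left-hand side stays close to $1$; for intermediate $j/\mu$ the $0.46$ and $0.479$ constants govern the Gaussian; and for $j/\mu > 1.1$ the bound is driven by the exponential-in-$j$ decay coming from \eqref{eq : estimate with powers}, so that $\alpha_{j-1}(\mu) \sim 2(j-1)$ makes the right-hand side relatively large.

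The main obstacle is the intermediate regime $j \sim \mu$, where neither the Gaussian approximation nor the geometric tail dominates cleanly, and where the $+\frac{1}{\mu}$ remainder is not automatically absorbed. Here the hypothesis $\mu \geq 3000$ is crucial: it allows the numerous $O(\mu^{-1/2})$ and $\exp(-c\mu)$ correction terms to be bounded by explicit small constants, leaving only the dominant contributions, and it is precisely the numerical optimization of these constants across the three regimes that produces the explicit value $7.82$. The remainder of the argument is bookkeeping, using Lemma \ref{lem : lemma on sequence dn} to replace $d_j$ by $\alpha_{j-1}(\mu)$ wherever needed so that the final bound takes the stated closed form.
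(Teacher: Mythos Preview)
Your starting identity $T^{-1}\Lambda_0=\frac{1}{\mu}(T^{-1}D_0-I)$ is the same one the paper uses, and your plan for the finite sum $\sum_{i<j}$ via the pointwise bounds of Lemma~\ref{lem : estimates entries of Tinv with g} together with integral comparisons matches the paper's treatment of that part. The gap is in the other two pieces.

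First, the triangle-inequality step $\|(T^{-1}\Lambda_0)_{col(j)}\|_1\le \frac{2j}{\mu}\|(T^{-1})_{col(j)}\|_1+\frac{1}{\mu}$ is too lossy, and your subsequent algebra has a sign slip: the sufficient condition it produces is $2j\|(T^{-1})_{col(j)}\|_1\le \frac{7.82\mu}{\alpha_{j-1}(\mu)}-1$, not $+1$. For $j$ large (say $j>4\mu$) the right-hand side becomes negative, so no estimate on $\|(T^{-1})_{col(j)}\|_1$ can close the argument. The paper avoids this by keeping the exact identity $\mu\|(T^{-1}\Lambda_0)_{col(j)}\|_1=\sum_{i\ne j}2j|T^{-1}_{i,j}|+|1-2jT^{-1}_{j,j}|$ and exploiting that the diagonal contribution $1-2jT^{-1}_{j,j}$ combines with the tail bound to give $\frac{\alpha_{j-1}(\mu)-2j}{\mu\alpha_{j-1}(\mu)}+\frac{1}{\alpha_{j-1}(\mu)}(1+\frac{\mu}{\alpha_j(\mu)})\le \frac{3}{\alpha_{j-1}(\mu)}$; the isolated $\frac{1}{\mu}$ never appears.

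Second, you propose to handle $\sum_{i>j}|T^{-1}_{i,j}|$ by symmetry and the same pointwise bounds. The paper instead invokes Lemma~\ref{lem : norm colums of lambda Tinv}, a telescoping identity that gives $\sum_{i>j}2i|T^{-1}_{i,j}|=\mu(T^{-1}_{j,j}-T^{-1}_{j+1,j})$ directly, hence $\sum_{i>j}2j|T^{-1}_{i,j}|\le \mu(T^{-1}_{j,j}-T^{-1}_{j+1,j})$. This is what makes the combination with the diagonal term work uniformly in $j$ and is the key ingredient you are missing. With only pointwise symmetry bounds, the tail estimate does not scale correctly when $j\gg\mu$, for the same reason the $\frac{1}{\mu}$ remainder above fails: after multiplying by $2j/\mu$ you are left with a term of order $1/\mu$ rather than $1/\alpha_{j-1}(\mu)$.
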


\begin{proof}
Let $j \geq 4$ and notice that 
\[
\mu T^{-1}\Lambda_0  =  T^{-1}D_0 - I_d. 
\]
The above identity implies that
\begin{align*}
    \mu \|(T^{-1}\Lambda_0)_{col(j)}\|_1 = \sum_{i\neq j} 2j |T^{-1}_{i,j}| + |1 - 2jT^{-1}_{j,j}|.
\end{align*}
Now, using that $\mu \Lambda_0 T^{-1} =  D_0T^{-1} -I_d$, we have that 
\begin{align*}
   \sum_{i = j+1}^\infty 2j |T^{-1}_{i,j}| \leq   \sum_{i = j+1}^\infty 2i |T^{-1}_{i,j}| = \mu \sum_{i = j+1}^\infty |T^{-1}_{i-1,j} - T^{-1}_{i+1,j}|.
\end{align*}
But using  Lemma \ref{lem : norm colums of lambda Tinv}, we obtain that 
\begin{align*}
   \sum_{i = j+1}^\infty 2j |T^{-1}_{i,j}| \leq    \mu \sum_{i = j+1}^\infty (-1)^{i+j+1}(T^{-1}_{i-1,j} - T^{-1}_{i+1,j}) = \mu\left(T^{-1}_{j,j} - T^{-1}_{j+1,j}\right).
\end{align*}
Using Lemma \ref{lem : estimates entries of Tinv with dn} and  \eqref{eq : estimates on dn}, we get that $T^{-1}_{j,j} \geq 0$ and $T^{-1}_{j,j} \leq \frac{1}{2j}$. Moreover, the aforementioned lemma yields
\begin{align*}
    \|(T^{-1}\Lambda_0)_{col(j)}\|_1 \leq \frac{1}{\mu}\sum_{i =1}^{j-1} 2j |T^{-1}_{i,j}| + \frac{1}{\mu}-\frac{2j}{\mu\alpha_{j-1}(\mu)} + \frac{1}{\alpha_{j-1}(\mu)}\left(1 + \frac{\mu}{\alpha_j(\mu)} \right). 
\end{align*}
Now, we notice that $j \to \frac{\alpha_{j-1}(\mu)-2j}{\mu}$ is decreasing in $j$ and we get
\begin{align*}
    \|(T^{-1}\Lambda_0)_{col(j)}\|_1 \leq \frac{1}{\mu}\sum_{i =1}^{j-1} 2j |T^{-1}_{i,j}| + \frac{3}{\alpha_{j-1}(\mu)}. 
\end{align*}

It remains to estimate the term $ \sum_{i = 1}^{j-1} 2j |T^{-1}_{i,j}|$. Let $n > \frac{\mu^2}{4}$ and still denote $T(n) = T$ for simplicity.  Then, using \eqref{eq : estimates of Tij with di} and the proof of Lemma \ref{lem : estimates entries of Tinv with dn}, we get
\begin{align*}
    \frac{2j}{\mu}|T^{-1}_{i,j}| \leq \frac{2j}{d_{j-1}d_j} \frac{\mu^{j-i-1}}{d_i \dots d_{j-2}} \leq \frac{2j}{d_{j-1}d_j} \exp\left(g\left(\frac{i-2}{\mu}\right)-g\left(\frac{j-3}{\mu}\right)\right).
\end{align*}
This implies that 
    \begin{align*}
        \frac{2j}{\mu}\sum_{i =1}^{j-1} |T^{-1}_{i,j}|  \leq \frac{2j\mu^{j-2}}{d_1 \dots d_j} +  \frac{2j}{d_{j-1} d_j}  + \frac{2j}{d_{j-1}d_j }\sum_{i = 2}^{j-2} \exp\left(g\left(\frac{i-2}{\mu}\right)-g\left(\frac{j-3}{\mu}\right)\right).  
    \end{align*}
    Now, since $g$ is an increasing function, we obtain
    \begin{align*}
        \sum_{i = 2}^{j-2}  \exp\left(g\left(\frac{i-2}{\mu}\right)\right) \leq  \int_{2}^{j-1}  \exp\left(g\left(\frac{i-2}{\mu}\right)\right) di =  \mu \int_{0}^{x_{j-3}}  \exp\left(g\left(x\right)\right) dx 
    \end{align*}
    where we recall that $x_{j} = \frac{j}{\mu}$.  Let $\alpha \in \left(0, x_{j-3}\right)$, then we consider the following decomposition
      \begin{align}\label{eq : in appendix lemma step 1}
         \int_{0}^{x_{j-3}}  \exp\left(g\left(x\right)\right) dx  &\leq \int_{0}^{\alpha}  \exp\left(g\left(x\right)\right) dx   + \int_{\alpha}^{x_{j-3}}  \exp\left(g\left(x\right)\right) dx. 
    \end{align}
    Notice $g'(x) = \ln\left(\sqrt{x^2+1} + x\right)$, which is an increasing function on $[0,\infty).$ 
    Then, the second term on the right hand side of \eqref{eq : in appendix lemma step 1} is estimated as follows
    \begin{align*}
        \int_{\alpha}^{x_{j-3}}  \exp\left(g\left(x\right)\right) dx = \int_{\alpha}^{x_{j-3}} \frac{g'(x)}{g'(x)} \exp\left(g\left(x\right)\right) dx &\leq \frac{1}{\mu \ln\left(\sqrt{\alpha^2+1} + \alpha\right)} \int_{\alpha}^{x_{j-3}} {g'(x)} \exp\left(g\left(x\right)\right)\\
        & =\frac{\exp\left(g\left(x_{j-3}\right)\right) - \exp\left(g\left(\alpha\right)\right)}{\mu\ln\left(\sqrt{\alpha^2+1} + \alpha\right)}. 
    \end{align*}
    Concerning the first term on the right hand side of \eqref{eq : in appendix lemma step 1}, we first observe that $g(x) \leq \frac{x^2}{2}$ for all $x >0$. This implies 
    \begin{align*}
        \int_{0}^{\alpha}  \exp\left(g\left(x\right)\right) dx \leq \int_{0}^{\alpha}  \exp\left(\frac{x^2}{2}\right)  dx \leq \alpha \exp\left(\frac{\alpha^2}{2}\right).
    \end{align*}
    Consequently, we obtain that 
    {\small{
    \begin{align*}
        \frac{2j}{d_j d_{j-1}}\sum_{i =2}^{j-2} \exp\left(g\left(\frac{i-2}{\mu}\right)-g\left(\frac{j-3}{\mu}\right)\right) &\leq \frac{2j \exp(-g(x_{j-3})) }{d_j d_{j-1}} \left(\frac{\exp\left(g\left(x_{j-3}\right)\right) - \exp\left(g\left(\alpha\right)\right)}{\ln\left(\sqrt{\alpha^2+1} + \alpha\right)} + \alpha \mu \exp\left(\frac{\alpha^2}{2}\right)\right)\\
        &\leq \frac{2j}{d_jd_{j-1}} \frac{1}{\ln\left(\sqrt{\alpha^2+1} + \alpha\right)} + \frac{2j \alpha \mu}{d_jd_{j-1}} \exp(-g(x_{j-3})) \exp\left(\frac{\alpha^2}{2}\right).
    \end{align*}
    }}
    Now, one can prove that 
    \begin{align}\label{eq : estimates on g}
        g(x) \geq \begin{cases}
            \mu\frac{x}{2} &\text{ for all } x \geq 1.2\\
            0.45 \mu x^2   &\text{ for all } x < 1.2.
        \end{cases}
    \end{align}
    Suppose that $x_{j-3} \geq 1.2$. Then, we choose $\alpha =1.2$, and, using Lemma \ref{lem : lemma on sequence dn}, we get
    {\small
     \begin{align*}
       \frac{2j}{ d_{j-1}}\sum_{i =2}^{j-2} \exp\left(g\left(\frac{i-2}{\mu}\right)-g\left(\frac{j-3}{\mu}\right)\right)
        &\leq  \frac{2j}{d_{j-1}\ln\left(1+\sqrt{2.2}\right)} + \frac{2.4j\mu}{d_{j-1}} \exp(-\frac{1}{2}(j-3)) \exp\left(\frac{1}{2}\right)\\
        &\leq  \frac{j}{(j-1)\ln\left(1+\sqrt{2.2}\right)} + \frac{2.4\mu}{d_{j-1}} \exp(-0.6\mu) \exp\left(\frac{1}{2}\right) \\
        &\leq 1.5
    \end{align*}
    }
    for all $\mu \geq 3000$ and using that $x_{j-3} \geq 1.2$.
  Suppose now that $x_{j-2} < 1.2$. Then, we choose $\alpha = \alpha_j \bydef 1-\exp(-x_{j-3})$. Note that by construction $\alpha_j \leq x_{j-3}$. Moreover, since $d_{j-1} \geq j-2 + \sqrt{(j-2)^2 + \mu^2}$, we have
 {\small  \begin{align*}
        \frac{2j}{d_j d_{j-1}} \frac{1}{\ln\left(\sqrt{\alpha^2+1} + \alpha\right)} &=  \frac{2x_{j-3}}{ d_j\frac{d_j-1}{\mu} \ln\left(\sqrt{\alpha^2+1} + \alpha\right)} + \frac{6}{\mu d_jd_{j-1} \ln\left(\sqrt{\alpha^2+1} + \alpha\right)} \\
        &\leq \frac{1}{d_j} \frac{2x_{j-3}}{x_{j-3} + \frac{1}{\mu}+\sqrt{(x_{j-3}+\frac{1}{\mu})^2 + 1} }  \frac{1}{\ln\left(\sqrt{\alpha_j^2+1} + \alpha_j\right)} + \frac{6}{\mu d_j d_{j-1} \ln\left(\sqrt{\alpha^2+1} + \alpha\right)} \\
       &= \frac{1}{d_j} h(x_{j-3})   + \frac{6}{\mu d_j d_{j-1} \ln\left(\sqrt{\alpha^2+1} + \alpha\right)},
    \end{align*}}
    where 
    \[
    h(x) \bydef  \frac{2x}{x + \frac{1}{\mu}+\sqrt{(x+\frac{1}{\mu})^2 + 1} }  \frac{1}{\ln\left(\sqrt{(1-\exp(-x))^2+1} + (1-\exp(-x))\right)}.
    \]
    Studying the variations of $h$, we obtain that $h(x) \leq 2$ for all $0 \leq x \leq 2$. Moreover, notice that 
    \begin{align*}
        \frac{1}{\ln\left(\sqrt{\alpha^2+1} + \alpha\right)} \leq \frac{2}{\alpha} = \frac{2}{1-\exp(-x_{j-3})} \leq \frac{4}{x_{j-3}} = \frac{4\mu}{j-3}.
    \end{align*}
    This implies that 
    \[
    \frac{6}{\mu d_j d_{j-1} \ln\left(\sqrt{\alpha^2+1} + \alpha\right)} \leq \frac{24}{(j-3)d_j d_{j-1}}.
    \]
    Secondly,
    {\small
    \begin{align*}
         \frac{2j \alpha \mu }{d_jd_{j-1}} \exp(-g(x_{j-3})) \exp\left(\frac{\alpha^2}{2}\right) &\leq \frac{2j (1-\exp(-x_{j-3})) \mu }{d_jd_{j-1}} \exp(-0.45 \mu x_{j-3}^2) \exp\left(\frac{1}{2}\right) \\
         &\leq \frac{2x_{j} x_{j-3} \mu }{d_j \frac{d_{j-1}}{\mu}} \exp(-0.45 \mu x_{j-2}^2) \exp\left(\frac{1}{2}\right) \\
         &\leq  \frac{2 x_{j-3}^2 \mu }{d_j} \exp(-0.45 \mu x_{j-3}^2) \exp\left(\frac{1}{2}\right) +  \frac{6 x_{j-3} }{d_j} \exp(-0.45 \mu x_{j-3}^2) \exp\left(\frac{1}{2}\right) \\
         &\leq \frac{1}{d_j}\left(\frac{2e^{-\frac{1}{2}}}{0.45 } + \frac{6}{\sqrt{0.9\mu} }\right).
    \end{align*}}
    This implies that 
    \begin{align*}
        \frac{2j}{d_{j-1}}\sum_{i = 2}^{j-1} \exp\left(g\left(\frac{i-2}{\mu}\right)-g\left(\frac{j-2}{\mu}\right)\right) \leq \frac{2e^{-\frac{1}{2}}}{0.45} + 2 +  \frac{6}{\sqrt{0.9\mu}} + \frac{24}{(j-3)d_{j-1}} \leq 4.82
    \end{align*}
    for all $\mu \geq 3000.$ Since this is true for all $n \geq \frac{\mu^2}{4}$, we conclude the proof for $j \geq 4$.  Now, suppose that $j = 3$. Then we have
    \begin{align*}
        \frac{2j}{\mu}\sum_{i=1}^{2} |T^{-1}_{i,j}| \leq \frac{6}{\mu} \left( \frac{\mu^2}{d_1d_2d_3} + \frac{\mu}{d_2d_3}\right) \leq \frac{12}{\mu^2} < \frac{4.82}{2+\sqrt{4+\mu^2}} 
        \end{align*}
        for all $\mu \geq 3000$. The same reasoning applies for the case $j=2$, which concludes the proof.
\end{proof}

Finally, the next lemma provides an explicit value for $C_0^*$ and $C_1^*$. Combined with Section \ref{ssec : computation for a bounded range of mu}, it allows to compute explicit values for the bounds in \eqref{eq : ineq C0 C1n CA0n} for all $\mu \geq 0.$

\begin{lemma}\label{lem : C0 and C1}
    Suppose that $\mu \geq 3000$ and let $C_0^*, C_1^*$ be defined as 
    \begin{align*}
        C_0^* \bydef 15.71 ~~ \text{ and } ~~
        C_1^* \bydef 15.71.
    \end{align*}
    Then \begin{align*}
        (2 + \mu) \|W_0 \mathcal{L}_\mu^{-1}  {\Lambda} W_0^{-1}\|_1 \leq {C_0^*} ~~ \text{ and } ~~
         \|W_0 \mathcal{L}_\mu^{-1}  {\Lambda} \mathcal{D}_{\mu} W_0^{-1}\|_1 \leq C_1^*.
    \end{align*}
\end{lemma}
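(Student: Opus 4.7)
The plan is to combine the three structural ingredients already established: the general matrix-form bounds of Lemma \ref{lem : general bound Linv}, the asymptotic tail estimate for the columns of $T^{-1}\Lambda_0$ in Lemma \ref{lem : operator norm Tinv lambda}, and the sharp lower bound on $\xi_\mu = \|(T^{-1})_{col(1)}\|_1$ from Lemma \ref{lem : norm of first column}. Since $\mu^\ast = 3000$ is deliberately large, once these three estimates are plugged in, the two displayed bounds will reduce to a short numerical verification.

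\textbf{First step: reduction.} Invoking Lemma \ref{lem : general bound Linv}, it suffices to prove
\[
\frac{\mu+2}{\mu}\max\!\Bigl\{2,\,\Bigl(\tfrac{1}{1+2\mu\xi_\mu}+2\Bigr)\mu\|T_\mu^{-1}\Lambda_0\|_1\Bigr\}\le 15.71
\]
and analogously
\[
\max\!\Bigl\{\tfrac{2(\mu+1)}{\mu},\,\Bigl(\tfrac{1}{1+2\mu\xi_\mu}+2\Bigr)\|T_\mu^{-1}\Lambda_0\mathcal{D}_{0,\mu}\|_1\Bigr\}\le 15.71,
\]
for every $\mu\ge 3000$. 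Both prefactors $\frac{\mu+2}{\mu}$ and $\frac{2(\mu+1)}{\mu}$ are trivially close to $1$ and $2$ respectively, so the content is in the second argument of each maximum.

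\textbf{Second step: column bounds via Lemma \ref{lem : operator norm Tinv lambda}.} Using $\|(T^{-1}\Lambda_0)_{col(j)}\|_1\le 7.82/(j-1+\sqrt{(j-1)^2+\mu^2})$, the quantity $j-1+\sqrt{(j-1)^2+\mu^2}$ is increasing in $j$ and equals $\mu$ at $j=1$; hence
\[
\mu\|T_\mu^{-1}\Lambda_0\|_1\le 7.82.
\]
For the weighted norm, $\|T_\mu^{-1}\Lambda_0\mathcal{D}_{0,\mu}\|_1=\sup_{j\ge 1}(j+1+\sqrt{j^2+\mu^2})\|(T^{-1}\Lambda_0)_{col(j)}\|_1$, so I need to bound
\[
R(j)\bydef \frac{j+1+\sqrt{j^2+\mu^2}}{j-1+\sqrt{(j-1)^2+\mu^2}}.
\]
Writing $R(j)=1+\bigl[2+(\sqrt{j^2+\mu^2}-\sqrt{(j-1)^2+\mu^2})\bigr]/\bigl[(j-1)+\sqrt{(j-1)^2+\mu^2}\bigr]$ and using both $\sqrt{j^2+\mu^2}-\sqrt{(j-1)^2+\mu^2}\le 1$ and the denominator lower bound $\ge\mu$, we get $R(j)\le 1+3/\mu$; a sharper analysis at $j=1$ gives $R(1)\le 1+2/\mu+1/(2\mu^2)$. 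Either way, $\|T_\mu^{-1}\Lambda_0\mathcal{D}_{0,\mu}\|_1\le 7.82(1+3/\mu)$.

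\textbf{Third step: lower bound on $2\mu\xi_\mu$.} From Lemma \ref{lem : norm of first column}, $\xi_\mu\ge \frac{\mu\exp(g(2/\mu))}{1+\sqrt{1+\mu^2}}\bigl(\sqrt{\pi/(2\mu)}-2/\mu\bigr)$. Using $\exp(g(2/\mu))\ge 1$ and $\sqrt{1+\mu^2}\le\mu+1/(2\mu)\le\mu+1$, this rearranges to
\[
2\mu\xi_\mu\ge \frac{2\mu}{\mu+2}\bigl(\sqrt{\pi\mu/2}-2\bigr).
\]
This expression is monotonically increasing in $\mu$, so its minimum over $[\mu^\ast,\infty)$ is attained at $\mu=3000$ and evaluates to roughly $133.15$. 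Therefore $\frac{1}{1+2\mu\xi_\mu}+2\le 2.0075$ uniformly in $\mu\ge 3000$.

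\textbf{Fourth step: conclude.} Plugging into the first display, $\bigl(\tfrac{1}{1+2\mu\xi_\mu}+2\bigr)\mu\|T^{-1}\Lambda_0\|_1\le 2.0075\cdot 7.82\le 15.70$, and multiplying by $(\mu+2)/\mu\le 1+2/3000$ keeps the bound below $15.71$, yielding $C_0^\ast=15.71$. Similarly $\bigl(\tfrac{1}{1+2\mu\xi_\mu}+2\bigr)\|T^{-1}\Lambda_0\mathcal{D}_{0,\mu}\|_1\le 2.0075\cdot 7.82\cdot(1+3/\mu)\le 15.71$, giving $C_1^\ast=15.71$. The only real obstacle is the arithmetic margin: everything is tight, so the precise form of the lower bound on $2\mu\xi_\mu$ (monotonicity in $\mu$, extraction of a clean $\sqrt{\pi\mu/2}$ term) is what ensures the constants on both sides round to $15.71$ and not something slightly larger.
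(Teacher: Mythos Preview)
Your approach is exactly the one the paper uses: combine Lemma~\ref{lem : general bound Linv} with Lemma~\ref{lem : operator norm Tinv lambda} and the lower bound on $\xi_\mu$ from Lemma~\ref{lem : norm of first column}, then evaluate at $\mu=3000$. The $C_0^\ast$ computation closes as you wrote.

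There is, however, a genuine arithmetic slip in the $C_1^\ast$ step. With your crude bound $R(j)\le 1+3/\mu$ you obtain
\[
\Bigl(\tfrac{1}{1+2\mu\xi_\mu}+2\Bigr)\|T^{-1}\Lambda_0\mathcal{D}_{0,\mu}\|_1 \le 2.0075\cdot 7.82\cdot(1+3/3000)\approx 15.714,
\]
which exceeds $15.71$. The sharper value $R(1)\le 1+2/\mu+1/(2\mu^2)$ that you mention is not by itself enough either: you would still need to show that $\sup_{j\ge 1}R(j)$ is attained near $j=1$, which you have not argued. One clean fix is to prove $R(j)\le 1+2.1/\mu$ uniformly in $j$: writing $a=j-1$ and $r=\sqrt{a^2+\mu^2}$, the inequality $2\mu+\mu(2a+1)/(2r)\le 2.1(a+r)$ follows from $r\ge\mu$ together with $\mu\sqrt{r^2-\mu^2}/r\le\sqrt{r^2-\mu^2}$, since then $2.1r-2\mu\ge 0.1\mu>1/2\ge\mu/(2r)$ for $\mu\ge5$. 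With $R(j)\le 1+2.1/\mu$ the product becomes $15.6987\cdot 1.0007\le 15.71$ and the argument closes.
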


\begin{proof}
Using Lemma \ref{lem : general bound Linv} combined with Lemma \ref{lem : norm of first column} and Lemma \ref{lem : operator norm Tinv lambda}, we have 
\begin{align*}
     (\mu+2) \|W_0 \mathcal{L}_\mu^{-1}  {\Lambda} W_0^{-1}\|_1 &\leq \frac{3002}{3000} \max\left\{ 2, ~   \left(\frac{1}{1+2\mu \xi_\mu} + 2\right) \mu\|T^{-1}_\mu\Lambda_0\|_1\right\}\\
     &\leq \frac{3002}{3000} \left(2*7.82 + \frac{ 7.82}{1+2\mu \xi_\mu}\right)\\
 &\leq  \frac{3002}{3000} \left( 2*7.82 + \frac{7.82}{1 +  \frac{2\mu^2 \exp(g(\frac{2}{\mu}))}{1+\sqrt{1+\mu^2}}  \left(\frac{\sqrt{\pi}}{\sqrt{2\mu}} - \frac{2}{\mu}\right) } \right)\\
 & \leq 15.71
\end{align*}
for all $\mu \geq 3000$. Similarly, using Lemma \ref{lem : general bound Linv} and Lemma  \ref{lem : operator norm Tinv lambda}, we get
$
     \| W_0 \mathcal{L}_\mu^{-1}  \Lambda \mathcal{D}_{\mu} W_0^{-1}\|_1 \leq 15.71. 
$
\end{proof}

\section{Applications}\label{sec : applications}

In this section, we illustrate the preceding analysis with specific examples of parabolic partial differential equations. In particular, we establish the constructive existence of solutions to initial value problems (IVPs) for the 2D Navier-Stokes equations, the Swift-Hohenberg equation, and the Kuramoto-Sivashinsky equation. Given a fixed integration time $h$ and an initial condition $b$, our goal is to constructively prove the existence of a solution to the corresponding IVP. All computer-assisted proofs, including the requisite codes, are accessible on GitHub at \cite{julia_cadiot}.

\subsection{The 2D Navier-Stokes equations}\label{ssec : NSE}

Let us consider the 2D Navier-Stokes equations (NSE)
\begin{align*}
   \begin{cases}
       {u}_t =  \nu \Delta {u} - u u_{x_1} - vu_{x_2} -  p_{x_1}  \\
       {v}_t =  \nu \Delta v - v v_{x_2} - uv_{x_1} -  p_{x_2}  \\
       u_{x_1} + v_{x_2} = 0\\
       u(0,{x_1},{x_2}) = b_u({x_1},{x_2}), ~~ v(0,{x_1},{x_2}) = b_v({x_1},{x_2})
   \end{cases} 
\end{align*}
where $\nu >0$ is the viscosity. Then, 
notice that 
\begin{align*}
    \nabla \times (0,u,v) = (0,0,v_{x_1}-u_{x_2})
\end{align*}
and we define $\omega \bydef v_{x_1}-u_{x_2}$ as the $z$ component of the vorticity. In particular, the Navier-Stokes equations can be turned into the vorticity equation, which reads
\begin{align}\label{eq : NS vorticity}
\begin{cases}
    \omega_t = \nu \Delta \omega - (u,v)\cdot \nabla w \\
    u_{x_1} + v_{x_2} = 0\\
    \omega(0,{x_1},{x_2}) = b({x_1},{x_2}) 
\end{cases}
\end{align}
 where $b({x_1},{x_2}) \bydef \partial_{x_1} b_v({x_1},{x_2}) - \partial_{x_2} b_u({x_1},{x_2})$ for all $({x_1},{x_2}) \in (-\pi,\pi)^2$. 


Now, assuming that the mean value of the velocity field is null, that is $u_{0,0}(t) = v_{0,0}(t) = 0$, we can make sense of the  Leray projection 
\begin{align}
    (u,v) = - \Delta^{-1}\nabla \times \omega.
\end{align}
In terms of Fourier coefficients, the above identity yields
\begin{align*}
    u_k = \frac{ik_2}{k_1^2 + k_2^2} w_k ~~ \text{ and } ~~ v_k = -\frac{ik_1}{k_1^2 + k_2^2} w_k 
\end{align*}
for all $k = (k_1,k_2) \in \mathbb{Z}^2\setminus \{0\}$. In particular, we consider sequences indexed on  $\mathbb{Z}^2\setminus \{0\}$ for the above identity to make sense. Moreover, given an integration time $h>0$, we are able to transform \eqref{eq : NS vorticity} into 
\begin{align}\label{eq : NSE full vorticity}
\begin{cases}
    \omega_t = \nu \Delta \omega + \left(\Delta^{-1}\nabla \times \omega\right)\cdot \nabla w, \qquad (t,x) \in (0,h) \times (-\pi,\pi)^2\\
    \omega(0,{x_1},{x_2}) = b({x_1},{x_2}). 
\end{cases}
\end{align}
Note that the divergence free equation $u_{x_1} + v_{x_2} =0$ becomes redundant with the identity $(u,v) = - \Delta^{-1}\nabla \times \omega$. Now, we look for $\omega$ in the form
\begin{align*}
    \omega(t,x_1,x_2) =   \sum_{k \in \mathbb{Z}^2\setminus\{0\}} W_{0,k}e^{ik\cdot x} + 2 \sum_{k \in \mathbb{Z}^2\setminus\{0\}} W_{n,k}e^{ik\cdot x} T_n\left(\frac{2t}{h}-1\right).
\end{align*}
Let $D_1, D_2, \partial_{x_1}$ and  $\partial_{x_2}$  be linear operators defined as 
\begin{align*}
    (D_1W)_{k,n} =  \frac{ik_1}{k_1^2 + k_2^2} W_{k,n}, ~~ ~~ (D_2W)_{k,n} =  -\frac{ik_2}{k_1^2 + k_2^2} W_{k,n}\\
    (\partial_{x_1}W)_{k,n} \bydef ik_1W_{k,n} ~~  \text{ and } ~~ (\partial_{x_1}W)_{k,n} \bydef ik_2W_{k,n}
\end{align*}
for all $(k,n) \in \left(\mathbb{Z}^2\setminus\{0\}\right) \times \mathbb{N}_0$.  In particular, $\partial_{x_1}$ and $\partial_{x_2}$ are abuses of notation and are the representation of partial derivatives in Fourier coefficients spaces. Moreover, $D_1$ and $D_2$ arise from the operator $\Delta^{-1}\nabla \times .$

Then, given $h>0$, we define 
\begin{align*}
    \mu_k \bydef \frac{h}{2} \nu (k_1^2 + k_2^2) 
\end{align*}
for all $(k_1,k_2) \in \mathbb{Z}^2\setminus\{0\}$. In particular, $\mu_k \geq \frac{h}{2} \nu$ for all $(k_1,k_2) \in \mathbb{Z}^2\setminus\{0\}$, implying that we do not need to translate  $\mu_k$ to make it positive (cf. \eqref{eq : translation spectrum}).

Consequently, we can write our zero finding problem of interest as 
\begin{align}\label{eq : zero finding navier stokes}
    F(W) \bydef \mathcal{L}W + \frac{h}{2}   \Lambda Q(W) - \beta, ~~ \text{ where } Q(W) =  (D_2W)*(\partial_x W) + (D_1W)*(\partial_y W).
\end{align}
In practice, the computer-assisted analysis becomes more tractable if the differential operators in the nonlinear term can be factored out. Having this simplification in mind, we derive the following result. 
\begin{lemma}\label{lem : non lin navier stokes}
We have the following property
    \begin{align*}
        (D_2W)*(\partial_x W) +  (D_1W)*(\partial_y W) = \partial_{x_1}\left( (D_2W)*W \right) + \partial_{x_2}\left( (D_1W)*W \right).
    \end{align*}
\end{lemma}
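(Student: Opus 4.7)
The plan is to verify the identity coefficient-wise by exploiting the fact that all four operators $D_1$, $D_2$, $\partial_{x_1}$, $\partial_{x_2}$ are Fourier multipliers acting only on the spatial index $k$. The key ingredient is a discrete product rule for the two-index convolution: if $M$ is a Fourier multiplier with symbol $m : \mathbb{Z}^2 \to \mathbb{C}$ that is additive (i.e.\ $m(k) = m(k-j) + m(j)$ for every $k,j$), then
\[
M(U*V) = (MU)*V + U*(MV).
\]
This follows at once from the definition $(U*V)_{k,n} = \sum_{j,i} U_{k-j,|n-i|} V_{j,|i|}$ by splitting $m(k) = m(k-j) + m(j)$ inside the sum. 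Since $\partial_{x_1}$ and $\partial_{x_2}$ have symbols $ik_1$ and $ik_2$, which are linear (hence additive) in $k$, the rule applies to them.

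I would apply this product rule to $\partial_{x_1}((D_2W)*W)$ and $\partial_{x_2}((D_1W)*W)$ and sum, obtaining
\[
\partial_{x_1}((D_2W)*W) + \partial_{x_2}((D_1W)*W) = (D_2W)*(\partial_{x_1}W) + (D_1W)*(\partial_{x_2}W) + \bigl((\partial_{x_1}D_2 + \partial_{x_2}D_1)W\bigr)*W.
\]
The claim then reduces to showing that the composite operator $\partial_{x_1}D_2 + \partial_{x_2}D_1$ vanishes. This is itself a Fourier multiplier, whose symbol at any $k = (k_1,k_2) \in \mathbb{Z}^2\setminus\{0\}$ is
\[
(ik_1)\left(-\frac{ik_2}{k_1^2+k_2^2}\right) + (ik_2)\left(\frac{ik_1}{k_1^2+k_2^2}\right) = \frac{k_1 k_2}{k_1^2+k_2^2} - \frac{k_1 k_2}{k_1^2+k_2^2} = 0.
\]

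No real obstacle arises here: this is merely the Fourier reformulation of the elementary physical-space identity $(u\partial_{x_1} + v\partial_{x_2})w = \partial_{x_1}(uw) + \partial_{x_2}(vw)$ valid whenever $(u,v)$ is divergence-free, which in our setting is guaranteed by the Leray projection encoded in $D_1$ and $D_2$. The only genuine verification is the discrete product rule for two-index convolutions, which itself boils down to the additivity of the symbols $ik_1$ and $ik_2$. If one prefers a fully direct approach, both sides can instead be expanded coefficient-by-coefficient using the convolution formula and the definitions of $D_1, D_2, \partial_{x_1}, \partial_{x_2}$; after simplification, each reduces to $\sum_{j,i} \tfrac{k_2 j_1 - k_1 j_2}{(k_1-j_1)^2+(k_2-j_2)^2}\, W_{k-j,|n-i|} W_{j,|i|}$, which confirms the identity.
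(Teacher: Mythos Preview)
Your argument is correct. The discrete product rule for Fourier multipliers with additive symbols is exactly the right tool, and the cancellation $\partial_{x_1}D_2+\partial_{x_2}D_1=0$ is the coefficient-space expression of the divergence-free condition built into the Leray projection. Your final coefficient-wise verification also checks out.

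As for comparison with the paper: the paper does not actually give a proof here---it simply refers to the analogous 3D computation in \cite{navier-stokes} and asserts that the same argument applies. Your write-up is therefore more complete than what appears in the paper. The underlying idea (exploiting additivity of the derivative symbol together with the vanishing of the divergence) is the standard one and is almost certainly what the cited reference does as well, so there is no genuine methodological difference, only a difference in level of detail.
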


\begin{proof}
    The proof in the case of the 3D Navier-Stokes equations can be found in \cite{navier-stokes}. The same arguments apply in the 2D version.
\end{proof}
Using the above lemma, we can define equivalently $Q(W)$ as 
\begin{align*}
    Q(W) =   \partial_{x_1}\left( (D_2W)*W \right) + \partial_{x_2}\left( (D_1W)*W \right).
\end{align*}

In practice, we will consider initial value problems where $b$ is given as a sine series :
\begin{equation}\label{eq : initial data sine}
    b(x_1,x_2) = 4 \sum_{k \in \mathbb{N}^2}^\infty b_{k}\sin(k_1x_1)\sin(k_2x_2), ~~ \text{ where } (b_k)_{k \in \mathbb{N}^2} \in \ell^1(\mathbb{N}^2).
\end{equation}
Noticing the invariance of $F$ under the sine-sine symmetry, we also look for a solution $\omega$ of the form  
\begin{align*}
    \omega(t,x_1,x_2) =   4\sum_{k \in \mathbb{N}^2} W_{0,k}\sin(k_1x_1)\sin(k_2x_2) + 8 \sum_{k \in \mathbb{N}^2} W_{n,k}\sin(k_1x_1)\sin(k_2x_2) T_n\left(\frac{2t}{h}-1\right).
\end{align*}
Note that, by construction, the zero mean value condition is satisfied and we can make sense of the operator $\Delta^{-1} \nabla \times.$ 

\subsubsection{Global Existence}

Given $b$ as in \eqref{eq : initial data sine}, one may seek the existence of a globally bounded and smooth solution to \eqref{eq : NSE full vorticity} for all $t > 0$. In the specific case of the 2D Navier-Stokes equations (NSE), this problem was long resolved by Lions and Prodi in \cite{lions1959theoreme}. However, for more complex initial value problems, such as the three-dimensional case, the question of global existence remains open. Notably, since the pioneering work of Leray, it has been established that the only stationary solution to the unforced NSE is the trivial solution. Moreover, various results (see, e.g., \cite{Kato_NS_IVP, Kato_NS_Solu_Rm, Navier_Stokes_Lemarie_Gilles, NS_thesis, Arnold_NSE}) have demonstrated that if $b$ is sufficiently small, then the solution to the 3D NSE exists for all time.

In this section, we present similar arguments for the 2D NSE, but which could potentially be applied in the 3D case. Indeed, supposing that $\|b\|_1$ is small enough, we prove that \eqref{eq : NSE full vorticity} possesses a smooth and bounded solution for all times.

\begin{lemma}\label{lem : global existence condition}
Let $h_1 >0$ and $0< \epsilon^* < \nu$. If there exists $\sigma_0, \sigma_1 >0$ satisfying
\begin{equation}\label{eq : conditions for global existence}
\begin{aligned}
     &\|b\|_1  +  0.6383 \sqrt{\frac{h_1}{2\nu}} \sigma_0^2 \leq \sigma_0\\
      &\|b\|_1 e^{(-2\nu +\epsilon^*)h_1} +  0.6383 \sqrt{\frac{h_1}{2\nu}} \sigma_0^2 + \frac{1}{2(\nu -\epsilon^*)}\sigma_1^2 \leq \sigma_1\\
         &0.6383 \sqrt{\frac{2h_1}{\nu}} \sigma_0 + \frac{1}{\nu -\epsilon^*}\sigma_1 <1.
\end{aligned}
\end{equation}
 then there exists a unique solution $\tilde{w}$ to \eqref{eq : NSE full vorticity} and $\tilde{\omega}(t) \to 0$ uniformly as $t \to \infty.$
\end{lemma}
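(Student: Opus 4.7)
The plan is a global-in-time trapping-region argument on a weighted Banach space, exploiting the strict exponential decay of the heat semigroup on sine--sine Fourier modes: because the initial datum \eqref{eq : initial data sine} lives in the subspace indexed by $k\in\mathbb{N}^2$, the smallest active wavenumber satisfies $|k|^2\ge 2$, so $\|e^{\nu t\Delta}f\|_1\le e^{-2\nu t}\|f\|_1$, and the sine--sine symmetry is preserved by the nonlinearity (which is crucial so that the flow stays in this subspace).

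First I would derive the two Duhamel kernel estimates that underlie the hypotheses. Using Lemma~\ref{lem : non lin navier stokes} to factor the derivative out of the nonlinearity, and noting that the operators $D_1, D_2$ are contractive on $\ell^1$, one obtains $\|Q(\omega)\|_1 \lesssim \|\omega\|_1^2$ up to one spatial derivative, which is restored by the semigroup smoothing bound $\|\partial_{x_j} e^{\nu t\Delta}f\|_1 \le C(\nu t)^{-1/2} e^{-2\nu t}\|f\|_1$. Explicit computation of the resulting kernel integrals then yields
\begin{equation*}
\int_0^{h_1}\frac{C\,e^{-2\nu(h_1-s)}}{\sqrt{\nu(h_1-s)}}\,ds \;\le\; 0.6383\sqrt{\tfrac{h_1}{2\nu}}
\quad\text{and}\quad
\int_0^{t}\frac{C\,e^{-(2\nu-\epsilon^*)(t-s)}}{\sqrt{\nu(t-s)}}\,ds \;\le\; \tfrac{1}{2(\nu-\epsilon^*)},
\end{equation*}
where the second bound trades part of the true exponential decay (the slice $\epsilon^*$) against integrability of the singular factor at $s=t$; the hypothesis $\epsilon^*<\nu$ is used precisely here.

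Next I would run a Picard iteration for the Duhamel map $\mathcal{T}\omega(t) := e^{\nu t\Delta}b + \int_0^t e^{\nu(t-s)\Delta} Q(\omega(s))\,ds$ on the closed convex trapping set
\begin{equation*}
\mathcal{K} \bydef \Bigl\{\omega\in C([0,\infty);\ell^1):\; \sup_{[0,h_1]}\|\omega(t)\|_1\le\sigma_0,\ \sup_{t\ge h_1}e^{(2\nu-\epsilon^*)(t-h_1)}\|\omega(t)\|_1\le\sigma_1\Bigr\}.
\end{equation*}
The first condition of \eqref{eq : conditions for global existence} is exactly $\sup_{[0,h_1]}\|\mathcal{T}\omega\|_1\le\sigma_0$ once the first kernel estimate is applied. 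For $t\ge h_1$, multiplying Duhamel by the weight $e^{(2\nu-\epsilon^*)(t-h_1)}$ and splitting the time integral at $s=h_1$, the initial datum contributes $\|b\|_1 e^{-(2\nu-\epsilon^*)h_1}$, the piece on $[0,h_1]$ contributes at most $0.6383\sqrt{h_1/(2\nu)}\sigma_0^2$, and the piece on $[h_1,\infty)$ --- using the weighted a priori bound $\|\omega(s)\|_1\le\sigma_1 e^{-(2\nu-\epsilon^*)(s-h_1)}$ --- contributes at most $\frac{\sigma_1^2}{2(\nu-\epsilon^*)}$; the second hypothesis is exactly the trapping inequality obtained. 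Bilinearity of $Q$ applied to $\omega_1-\omega_2$ against $\omega_1+\omega_2$ then produces the Lipschitz constant $0.6383\sqrt{2h_1/\nu}\,\sigma_0+(\nu-\epsilon^*)^{-1}\sigma_1$ on $\mathcal{K}$, which is strictly below $1$ by the third hypothesis. Banach's fixed-point theorem yields the unique solution $\tilde\omega\in\mathcal{K}$, and the a posteriori regularity arguments of Section~\ref{sec:regularity} promote it to a smooth classical solution of \eqref{eq : NSE full vorticity}. Membership in $\mathcal{K}$ gives directly $\|\tilde\omega(t,\cdot)\|_{L^\infty(\Omega)}\le\|\tilde\omega(t)\|_1\le\sigma_1 e^{-(2\nu-\epsilon^*)(t-h_1)}$ for $t\ge h_1$, so $\tilde\omega(t)\to 0$ uniformly in $x$.

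The main obstacle is the kernel estimates of Step~1 --- in particular pinning down the sharp value $\frac{1}{2(\nu-\epsilon^*)}$ for the infinite-time integral of a singular-at-zero kernel against an exponentially decaying tail, and keeping the finite-time constant down to $0.6383$ (close to $2/\pi$, suggesting an optimization is in play). This is a delicate balance between the $t^{-1/2}$ smoothing singularity inherited from the derivative in the nonlinearity and the exponential decay, mediated by the slack $\epsilon^*\in(0,\nu)$. Once those estimates are pinned down, the trapping/contraction argument itself is a routine application of Banach's theorem in the weighted space $\mathcal{K}$.
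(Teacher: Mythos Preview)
Your overall architecture (Duhamel formulation, quadratic trapping region split at $t=h_1$, contraction from bilinearity of $Q$) is exactly the paper's, but the route you propose to the two key constants is not the paper's and, as written, does not work.

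The smoothing estimate $\|\partial_{x_j} e^{\nu t\Delta}f\|_1\le C(\nu t)^{-1/2}e^{-2\nu t}\|f\|_1$ is false on $\ell^1(\mathbb{N}^2)$: the multiplier $|k|e^{-\nu t|k|^2}$ cannot carry both the $t^{-1/2}$ blow-up and the full $e^{-2\nu t}$ decay simultaneously (evaluate at $|k|=\sqrt{2}$ for large $t$). More importantly, the constant $0.6383$ does not arise from integrating a $t^{-1/2}$ kernel at all. The paper works \emph{mode-by-mode}: it first integrates $\int_0^{h_1}e^{\lambda_k(h_1-s)}ds$ exactly to get $\frac{1-e^{-\nu|k|_2^2 h_1}}{\nu|k|_2^2}$, multiplies by the factor $|k|_2/\sqrt{2}$ coming from the bound $|p_1k_2-p_2k_1|\le|p|_2|k|_2$ on the vorticity nonlinearity, and only then takes the supremum over $k$. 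Setting $y=\sqrt{\nu h_1}\,|k|_2$ this becomes $\sqrt{h_1/(2\nu)}\sup_{y>0}(1-e^{-y^2})/y$, and one checks numerically that this supremum is below $0.6383$. Swapping the order (sup over $k$ first, then integrate, as your semigroup route does) loses sharpness and will not reproduce this constant.

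Likewise, the constant $\tfrac{1}{2(\nu-\epsilon^*)}$ is not obtained by integrating a singular kernel against an exponential; it is simply the worst value of $\frac{|k|_2}{\sqrt{2}(\nu|k|_2^2-2\epsilon)}$ over $k\in\mathbb{N}^2$, attained at $|k|_2=\sqrt{2}$. Relatedly, the paper's weight is a \emph{small} exponential $e^{\epsilon t}$ with $0<\epsilon<\epsilon^*$ (so $\epsilon^*$ is an upper bound on the rate, used only to make the constants $\epsilon$-independent), not the near-maximal rate $e^{(2\nu-\epsilon^*)(t-h_1)}$ you propose. With the small weight the quadratic term picks up a factor $e^{-\epsilon s}\le 1$ rather than forcing you to balance two large exponentials, and the $[h_1,\infty)$ integral reduces to the trivial bound $\int_{h_1}^t e^{(\lambda_k+2\epsilon)(t-s)}ds\le \frac{1}{|\lambda_k|-2\epsilon}$. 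Once you switch to the mode-wise computation and the light weight $e^{\epsilon t}$, the three inequalities in \eqref{eq : conditions for global existence} drop out directly.
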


\begin{proof}
Let $0 < \epsilon < \epsilon^*$ and let $X_\epsilon$ be the Banach space defined as 
   \[
   X_\epsilon \bydef \{ U = (u_k)_{k \in \mathbb{N}^2}, u_k : (0,\infty) \to \R \text{ and } \|U\|_{X_\epsilon} < \infty \}  ~~ \text{ where }  \|U\|_{X_\epsilon} \bydef 4\sum_{k \in \mathbb{N}^2} \sup_{t \in [0,\infty)} |u_k(t)|e^{\epsilon t}.
   \]
   Now, notice that solutions to \eqref{eq : NSE full vorticity} satisfy
   \begin{align*}
       w_k(t) = e^{\lambda_k t} b_k + \int_0^t e^{\lambda_k(t-s)} ({Q}(W)(s))_k ds \bydef (\mathcal{T}(W(t)))_k
   \end{align*}
   for all $k \in \mathbb{N}^2$ and $t>0$. In particular, we want to prove that $\mathcal{T} : X_\epsilon \to X_\epsilon$ has a fixed point. Let $h_0 = 0$ and $h_2 = \infty$, then for each $i \in \{0, 1\}$ and all $U \in X_\epsilon$, we define $\|U\|_{X_\epsilon,i}$ as 
   \[
    \|U\|_{X_\epsilon,i} \bydef 4\sum_{k \in \mathbb{N}^2} \sup_{t \in [h_i, h_{i+1})} |u_k(t)|e^{\epsilon t}.
   \]
 Then, let $v_k(t) = e^{\epsilon t} w_k(t)$. Since $Q$ is quadratic, we get
   \begin{align*}
       |e^{\epsilon t}(\mathcal{T}(W(t)))_k| &\leq e^{(\lambda_k+\epsilon)t}|b_k| + \int_0^t e^{\lambda_k(t-s) + \epsilon t} e^{-2\epsilon s}\left|({Q}(V(s)))_k\right|ds.
   \end{align*}
  Now,  suppose that $t \in [0,h_{1})$. Since $\lambda_k = -\nu|k|_2^2$ by definition, we have that $\lambda_k \leq - 2\nu$ for all $k \in \mathbb{N}_0^2$. In particular, since $\epsilon < \nu$ by assumption, we have that $\lambda_k + 2\epsilon <0$ for all $k \in \mathbb{N}_0^2$. Using this, we obtain 
\begin{align*}
       |e^{\epsilon t}(\mathcal{T}(W(t)))_k| 
       &\leq e^{(\lambda_k+\epsilon)t}|b_k| +  \sup_{s \in [0, h_{1})}\left|({Q}(V(s)))_k\right|  \int_{0}^{t} e^{\lambda_k(t-s) + \epsilon t} e^{-2\epsilon s}ds\\
       & \leq |b_k|  - \left(1 - e^{(\lambda_k + 2\epsilon)h_{1}}\right)  \frac{\sup_{s \in [0, h_{1})}\left|({Q}(V(s)))_k\right|}{\lambda_k+2\epsilon}.
   \end{align*}

Let $s \in (0,t)$, then setting $V_{p_1,p_2}(s) = -{V}_{-p_1,p_2}(s) = {V}_{-p_1,-p_2}(s)$ and $V_{0,p_2}(s) = V_{p_1,0}(s) = 0$ for all $p \in \mathbb{N}_0^2$, we get
\begin{align*}
    ({Q}(V(s)))_k 
    & = \sum_{p \in \mathbb{Z}^2 \setminus \{0\}}   \frac{p_1k_2 - p_2k_1}{p_1^2+p_2^2}V_p(s) V_{k-p}(s).
\end{align*}
Using that $|p_1k_2 - p_2k_1| \leq |p|_2 |k|_2$, we get
\begin{align*}
    \left|({Q}(V(s)))_k\right| & \leq |k|_2 \sum_{p \in \mathbb{Z}^2 \setminus \{0\}}   \frac{1}{|p|_2}|V_p(s)V_{k-p}(s)|. 
\end{align*}
Using that $\frac{1}{|p|_2} \leq \frac{1}{\sqrt{2}}$ for all $p \in \mathbb{N}^2$, this implies that 
\begin{align*}
    |e^{\epsilon t}(\mathcal{T}(W(t)))_k| 
       & \leq  |b_k|  +    \frac{|k|_2 \left(1 - e^{(-\nu |k|_2^2 + 2\epsilon)h_{1}}\right)}{\sqrt{2}(\nu |k|_2^2 - 2\epsilon)} \sum_{p \in \mathbb{Z}^2 \setminus \{0\}}  \sup_{s \in [0, h_{1})} |V_p(s)V_{k-p}(s)|
\end{align*}
since $\lambda_k = - \nu |k|_2^2$ by definition. Let us consider the function $f(y) \bydef \frac{ (1-e^{-y^2})}{y}$. Then $f$ has a global maximum on $(0,\infty)$ and one can prove that $f(y) < 0.6382$ for all $y \in (0,\infty)$.  Denoting $y = \sqrt{\nu h_1} |k|_2$. Then  for $\epsilon$ sufficiently small, we have 
\[
\frac{|k|_2 \left(1 - e^{(-\nu |k|_2^2 + 2\epsilon)h_{1}}\right)}{\sqrt{2}(\nu |k|_2^2 - 2\epsilon)} \leq 0.6383 \sqrt{\frac{h_1}{2\nu}}.
\]
for all $|k|_2 \geq 0.$ Choosing such an epsilon, the above implies that 
\[
 |e^{\epsilon t}(\mathcal{T}(W(t)))_k| 
        \leq |b_k|  +    0.6383 \sqrt{\frac{h_1}{2\nu}} \sum_{p \in \mathbb{Z}^2 \setminus \{0\}}  \sup_{s \in [0, h_{1})} |V_p(s)V_{k-p}(s)|.
\]
Taking the supremum for $t \in [0,h_1)$ and summing over $k \in \mathbb{N}^2$, the above yields
\begin{align}\label{eq : ineq 1 global}
    \|\mathcal{T}(W)\|_{X_\epsilon,0} \leq \|b\|_1 +  0.6383 \sqrt{\frac{h_1}{2\nu}} \|W\|_{X_\epsilon,0}^2
\end{align}
since $V(t) = e^{\epsilon t} W(t)$ by definition.  Now, suppose that $t \in [h_1, \infty)$. We have
\small{
\begin{align*}
      &|e^{\epsilon t}(\mathcal{T}(W(t)))_k| \\
       &\leq e^{(\lambda_k+\epsilon)t}|b_k| +  \sup_{s \in [0, h_{1})}\left|({Q}(V(s)))_k\right|  \int_{0}^{h_1} e^{\lambda_k(t-s) + \epsilon t} e^{-2\epsilon s}ds +  \sup_{s \in [h_1, \infty)}\left|({Q}(V(s)))_k\right|  \int_{h_1}^{t} e^{\lambda_k(t-s) + \epsilon t} e^{-2\epsilon s}ds\\
       &\leq e^{(-2\nu +\epsilon^*)h_1}|b_k|   - \sup_{s \in [0, h_{1})}\left|({Q}(V(s)))_k\right|  \frac{  1 - e^{(\lambda_k + 2\epsilon)h_{1}}}{\lambda_k+2\epsilon} -  \sup_{s \in [h_1, \infty)}\left|({Q}(V(s)))_k\right|  \frac{1}{\lambda_k + 2 \epsilon}
\end{align*}
}
\normalsize
since $\lambda_k \leq -2 \nu$ for all $k \in \mathbb{N}^2$.
Using a similar reasoning as for the case $[0,h_1)$, we get
\begin{align}\label{eq : ineq 3 global}
     \|\mathcal{T}(W)\|_{X_\epsilon,1} \leq \|b\|_1 e^{(-2\nu +\epsilon^*)h_1} +  0.6383 \sqrt{\frac{h_1}{2\nu}} \|W\|_{X_\epsilon,0}^2 + \frac{1}{2(\nu -\epsilon^*)}\|W\|_{X_\epsilon,1}^2.
\end{align}
One the other hand, using an analogous analysis, we have
\begin{equation}\label{eq : ineq 2 global}
\begin{aligned}
   \|D\mathcal{T}(W) U\|_{\epsilon,0} &\leq  0.6383 \sqrt{\frac{2h_1}{\nu}} \|W\|_{X_\epsilon,0} \|U\|_{X_\epsilon,0} \\
    \|D\mathcal{T}(W) U\|_{\epsilon,1} &\leq   0.6383 \sqrt{\frac{2h_1}{\nu}} \|W\|_{X_\epsilon,0}\|U\|_{X_\epsilon,0} + \frac{1}{\nu -\epsilon^*}\|W\|_{X_\epsilon,1}\|U\|_{X_\epsilon,1}
    \end{aligned}
\end{equation}
for all $U \in X_\epsilon.$ Let $\sigma_0, \sigma_1>0$ be satisfying \eqref{eq : conditions for global existence}, and let  $X_{box}(\sigma_0,\sigma_1)$ be defined as 
\begin{equation*}
    X_{box}(\sigma_0,\sigma_1) \bydef \{U \in X_{\epsilon} : \|U\|_{X_\epsilon,0} \leq \sigma_0 \text{ and } \|U\|_{X_\epsilon,1} \leq \sigma_1\}.
\end{equation*}
 Using \eqref{eq : ineq 1 global}, \eqref{eq : ineq 3 global} and \eqref{eq : ineq 2 global}, we obtain that $\mathcal{T} : X_{box}(\sigma_0,\sigma_1) \to X_{box}(\sigma_0,\sigma_1)$ is well defined and satisfies
\[
\|D\mathcal{T}(W)\|_{X_\epsilon} < 1
\]
for all $W \in X_{box}(\sigma_0,\sigma_1)$. Consequently, using the Mean Value Inequality and the Banach fixed point theorem, we obtain that $\mathcal{T}$ has a unique fixed point in $X_{box}(\sigma_0,\sigma_1) \subset X_\epsilon$. The convergence to zero of the solution is a direct consequence of the definition of $X_\epsilon$. 
\end{proof}

\begin{corollary}\label{Cor : global existence}
Let $0 < \epsilon^* < \nu$ and let $h_1 >0$. Moreover let $\gamma_0 = 0.6383 \sqrt{\frac{h_1}{2\nu}}$ and $\gamma_1 = \frac{1}{2(\nu -\epsilon^*)}$.  If $\|b\|_1$ satisfies
\begin{equation}\label{eq : condition on the norm of b}
    \begin{aligned}
        &\|b\|_1 \leq  \frac{1}{4}\min\left\{\frac{1}{\gamma_0} ;~ \frac{1}{\gamma_1 e^{(-2\nu +\epsilon^*)h_1}}\right\}\\
        &1- \sqrt{1-4\|b\|_1\gamma_0} + 1- \sqrt{\frac{1-4\|b\|_1 \gamma_1 e^{(-2\nu +\epsilon^*)h_1}}{\frac{\gamma_1}{\gamma_0}+1}} < 1,
    \end{aligned}
\end{equation}
then there exists a unique solution $\tilde{w}$ to \eqref{eq : NSE full vorticity} and $\tilde{\omega}(t) \to 0$ as $t \to \infty.$
\end{corollary}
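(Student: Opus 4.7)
My plan is to derive Corollary~\ref{Cor : global existence} as a direct application of Lemma~\ref{lem : global existence condition}: I will exhibit explicit values of $\sigma_0$ and $\sigma_1$ from $\|b\|_1$ and check that the conditions \eqref{eq : condition on the norm of b} force all three inequalities of \eqref{eq : conditions for global existence}. The first inequality of \eqref{eq : conditions for global existence} is the quadratic $\gamma_0\sigma_0^2 - \sigma_0 + \|b\|_1 \le 0$, whose discriminant is non-negative precisely when $\|b\|_1 \le 1/(4\gamma_0)$, which is the first half of the first bound in \eqref{eq : condition on the norm of b}. I would take $\sigma_0$ to be the smallest root
\[
\sigma_0 \bydef \frac{1 - \sqrt{1-4\gamma_0\|b\|_1}}{2\gamma_0},
\]
so the first inequality becomes an equality; in particular $\gamma_0\sigma_0^2 = \sigma_0 - \|b\|_1$ and $2\gamma_0\sigma_0 = 1 - \sqrt{1-4\gamma_0\|b\|_1}$.

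With $\sigma_0$ fixed, the second inequality of \eqref{eq : conditions for global existence} becomes the quadratic $\gamma_1\sigma_1^2 - \sigma_1 + (\tilde b + \gamma_0\sigma_0^2) \le 0$, with $\tilde b \bydef \|b\|_1 e^{(-2\nu+\epsilon^*)h_1}$. I would take $\sigma_1$ to be its smallest root
\[
\sigma_1 \bydef \frac{1 - \sqrt{1 - 4\gamma_1(\tilde b + \gamma_0\sigma_0^2)}}{2\gamma_1},
\]
provided the discriminant is non-negative. Squaring the second line of \eqref{eq : condition on the norm of b} and rearranging produces the sharper estimate $4\gamma_0(\gamma_0+\gamma_1)\sigma_0^2 < 1 - 4\gamma_1\tilde b$ (the second half of the first bound in \eqref{eq : condition on the norm of b} ensures $1-4\gamma_1\tilde b \ge 0$), and dividing by $\gamma_0+\gamma_1$ yields $4\gamma_1(\tilde b + \gamma_0\sigma_0^2) < 1$. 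Hence $\sigma_1$ is well-defined and the second inequality of \eqref{eq : conditions for global existence} holds with equality.

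Third, the same manipulation shows $1 - 4\gamma_1(\tilde b + \gamma_0\sigma_0^2) > (1 - 4\gamma_1\tilde b)/(\gamma_1/\gamma_0 + 1)$, and taking square roots then inserting into the definition of $\sigma_1$ gives
\[
2\gamma_1\sigma_1 \;=\; 1 - \sqrt{1-4\gamma_1(\tilde b + \gamma_0\sigma_0^2)} \;<\; 1 - \sqrt{\frac{1-4\gamma_1\tilde b}{\gamma_1/\gamma_0+1}}.
\]
Adding $2\gamma_0\sigma_0 = 1-\sqrt{1-4\gamma_0\|b\|_1}$ to both sides and invoking the second line of \eqref{eq : condition on the norm of b} produces the third inequality of \eqref{eq : conditions for global existence}, $2\gamma_0\sigma_0 + 2\gamma_1\sigma_1 < 1$. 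Lemma~\ref{lem : global existence condition} then applies, yielding the unique solution $\tilde\omega$ to \eqref{eq : NSE full vorticity} on $[0,\infty)$ with $\tilde\omega(t) \to 0$ uniformly as $t \to \infty$.

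The main obstacle, as expected, is the algebraic reduction connecting the hypothesized bound $1-\sqrt{1-4\gamma_0\|b\|_1} < \sqrt{(1-4\gamma_1\tilde b)/(\gamma_1/\gamma_0+1)}$ (which is just a rewriting of the second line of \eqref{eq : condition on the norm of b}) to the sharper estimate $4\gamma_0(\gamma_0+\gamma_1)\sigma_0^2 < 1-4\gamma_1\tilde b$; this is a careful squaring step, after which the remainder of the argument is routine manipulation of square roots.
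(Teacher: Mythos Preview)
Your proof is correct and follows the same overall strategy as the paper: reduce to Lemma~\ref{lem : global existence condition} by exhibiting explicit $\sigma_0,\sigma_1$, with $\sigma_0$ taken as the smallest root of the first quadratic. The algebraic route to $\sigma_1$ differs slightly. The paper first uses the third (contraction) inequality $\sigma_0 < \frac{1-2\gamma_1\sigma_1}{2\gamma_0}$ to replace $\gamma_0\sigma_0^2$ by $\frac{(1-2\gamma_1\sigma_1)^2}{4\gamma_0}$ in the second inequality, obtaining a quadratic in $\sigma_1$ alone whose smallest root is precisely $\frac{1}{2\gamma_1}\bigl(1-\sqrt{(1-4\gamma_1\tilde b)/(\gamma_1/\gamma_0+1)}\bigr)$; the second line of \eqref{eq : condition on the norm of b} is then \emph{literally} the third inequality evaluated at these minimal roots. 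You instead keep $\sigma_0$ fixed, take $\sigma_1$ as the exact smallest root of the second inequality, and then show your $2\gamma_1\sigma_1$ is bounded above by the paper's expression, so the third inequality follows from the same hypothesis. Your path is arguably more direct (each inequality is solved in turn rather than intertwined), while the paper's choice has the cosmetic advantage that the hypothesis \eqref{eq : condition on the norm of b} is exactly the contraction condition rather than merely sufficient for it; both arrive at the same conclusion with the same hypotheses.
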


\begin{proof}
Let us denote $\gamma_0 \bydef 0.6383 \sqrt{\frac{h_1}{2\nu}}$ and $\gamma_1 \bydef \frac{1}{2(\nu -\epsilon^*)}$. Then, suppose that $\sigma_0, \sigma_1$ satisfy the following 
\begin{equation}
    \begin{aligned}
          \|b\|_1  + \gamma_0 \sigma_0^2 \leq  \sigma_0, ~~ \|b\|_1 e^{(-2\nu +\epsilon^*)h_1} +  \gamma_0 \sigma_0^2 +  \gamma_1 \sigma_1^2 \leq  \sigma_1 ~\text{ and }~
         2 \gamma_0 \sigma_0 + 2 \gamma_1 \sigma_1  < 1.
    \end{aligned}
\end{equation}
The first inequality imposes that $
   \|b\|_1 \leq \frac{1}{4\gamma_0}
$
and under that condition we have that $\|b\|_1  + \gamma_0 \sigma_0^2 \leq  \sigma_0$ if and only if
\[
\sigma_0 \in \left[\frac{1-\sqrt{1-4\gamma_0 \|b\|_1}}{2\gamma_0}~,~ \frac{1+\sqrt{1-4\gamma_0 \|b\|_1}}{2\gamma_0}\right].
\]
Now, using that $ \sigma_0 < \frac{1-2\gamma_1 \sigma_1}{2\gamma_0}$, a sufficient condition to satisfy $\|b\|_1 e^{(-2\nu +\epsilon^*)h_1} +  \gamma_0 \sigma_0^2 +  \gamma_1 \sigma_1^2 \leq  \sigma_1$ is that 
\[
\|b\|_1 e^{(-2\nu +\epsilon^*)h_1} + \frac{1}{4\gamma_0} - \left(\frac{\gamma_1}{\gamma_0}+1\right) \sigma_1 +   \left(\frac{\gamma_1^2}{\gamma_0} + \gamma_1\right) \sigma_1^2 \leq 0.
\]
In fact, the above imposes that 
\[
\left(\frac{\gamma_1}{\gamma_0}+1\right)^2 - 4 \left(\|b\|_1 e^{(-2\nu +\epsilon^*)h_1} + \frac{1}{4\gamma_0}\right)\left(\frac{\gamma_1^2}{\gamma_0} + \gamma_1\right)  \geq 0
\]
which is equivalent to
\[
\|b\|_1 \leq \frac{1}{4\gamma_1 e^{(-2\nu +\epsilon^*)h_1}}.
\]
Under such a condition, we have that 
{\small \[
\sigma_1 \in \left[\frac{1- \sqrt{\frac{1-4\|b\|_1 \gamma_1 e^{(-2\nu +\epsilon^*)h_1}}{\frac{\gamma_1}{\gamma_0}+1}}}{2\gamma_1} ~,~ \frac{1 + \sqrt{\frac{1-4\|b\|_1 \gamma_1 e^{(-2\nu +\epsilon^*)h_1}}{\frac{\gamma_1}{\gamma_0}+1}}}{2\gamma_1}\right].
\]}
Consequently, assuming that 
$
\|b\|_1 \leq \frac{1}{4}\min\left\{\frac{1}{\gamma_0}~,~ \frac{1}{\gamma_1 e^{(-2\nu +\epsilon^*)h_1}}\right\},
$
then a sufficient condition for the existence of $\sigma_0,\sigma_1 >0$ satisfying \eqref{eq : conditions for global existence} is that 
\[ 
1- \sqrt{1-4\|b\|_1 \gamma_0} + 1- \sqrt{\frac{1-4\|b\|_1 \gamma_1 e^{(-2\nu +\epsilon^*)h_1}}{\frac{\gamma_1}{\gamma_0}+1}} < 1.
\]
We conclude the proof using Lemma \ref{lem : global existence condition}.
\end{proof}

\begin{remark}
    Numerically, we can determine $h_1$ maximizing the quantity $\min\left\{\frac{1}{\gamma_0} ;~ \frac{1}{\gamma_1 e^{(-2\nu +\epsilon^*)h_1}}\right\}$ in the previous lemma. Then, one can approximate the highest value of $\|b\|_1$ satisfying \eqref{eq : condition on the norm of b} (using bisection for instance). In particular, such a quantity can be made rigorous numerically using the arithmetic on intervals. As an illustration, when $\nu =1$, one can prove that if $\|b\|_1 \leq 0.58$, then $h_1 =0.35$ satisfies \eqref{eq : condition on the norm of b} and we obtain that the associated initial value problem \eqref{eq : NSE full vorticity} possesses solutions that exist for all times and vanish as time goes to infinity. In comparison, a direct application of Gronwall's inequality yields that solutions exist for all times if $\|b\|_1 < \frac{\nu}{2} = 0.5.$ Consequently, the introduction of the parameter $h_1$ might allow to increase the trapping regions in which solutions will exist for all times and vanish at infinity.
\end{remark}

We construct $\overline{W}$, an approximate zero of $F$, and denote $\overline{\omega}$ its function representation (illustrated in Figure \ref{fig : NS}). Applying the above analysis and Theorem \ref{th : radii polynomial}, we obtain the following result.

 \begin{figure}[h!]
  \centering
  \begin{minipage}{.52\textwidth}
   \centering
  \includegraphics[clip,width=1\textwidth]{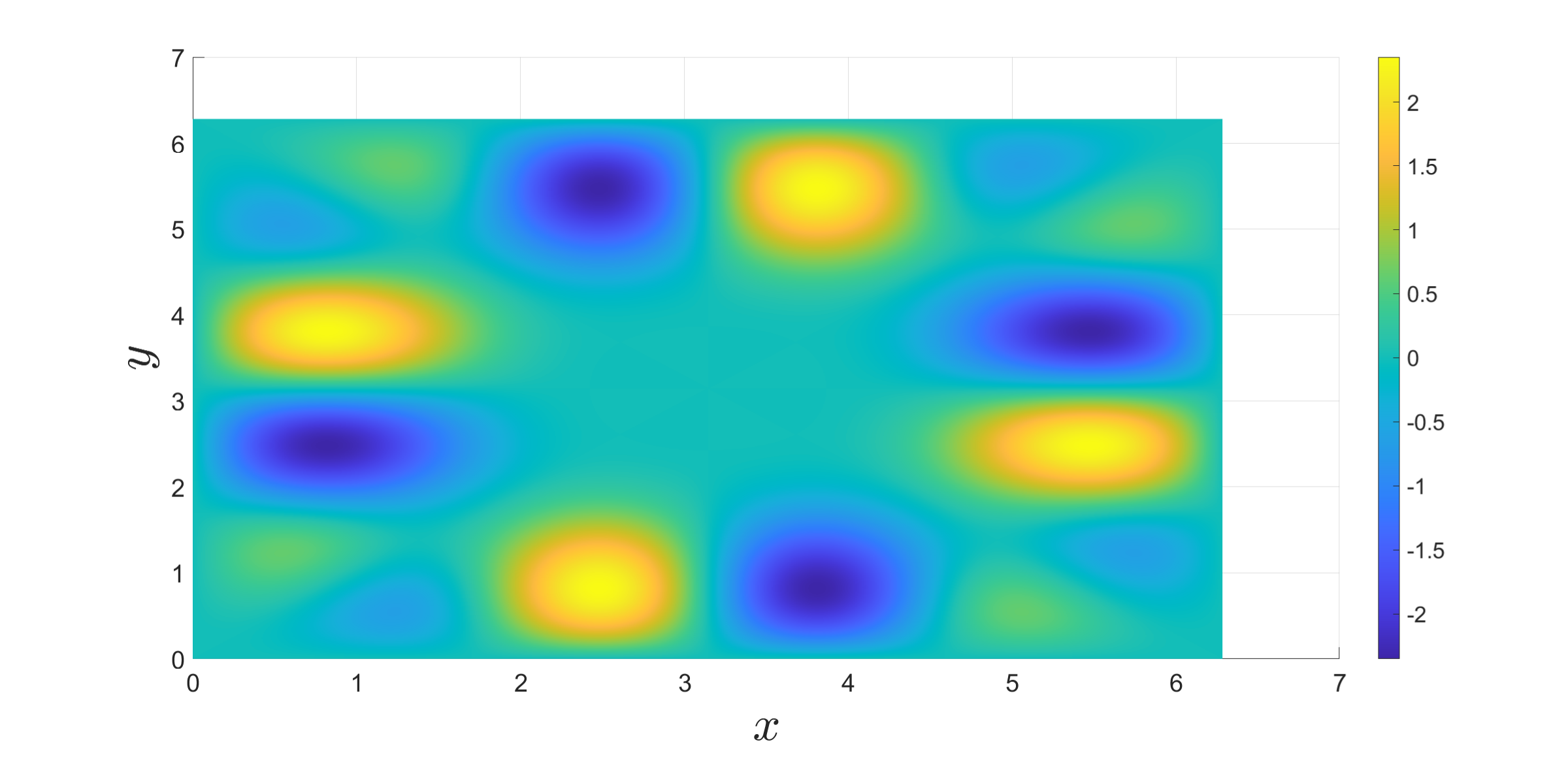}
  \end{minipage}%
  \begin{minipage}{.52\textwidth}
    \centering
   \includegraphics[clip,width=1\textwidth]{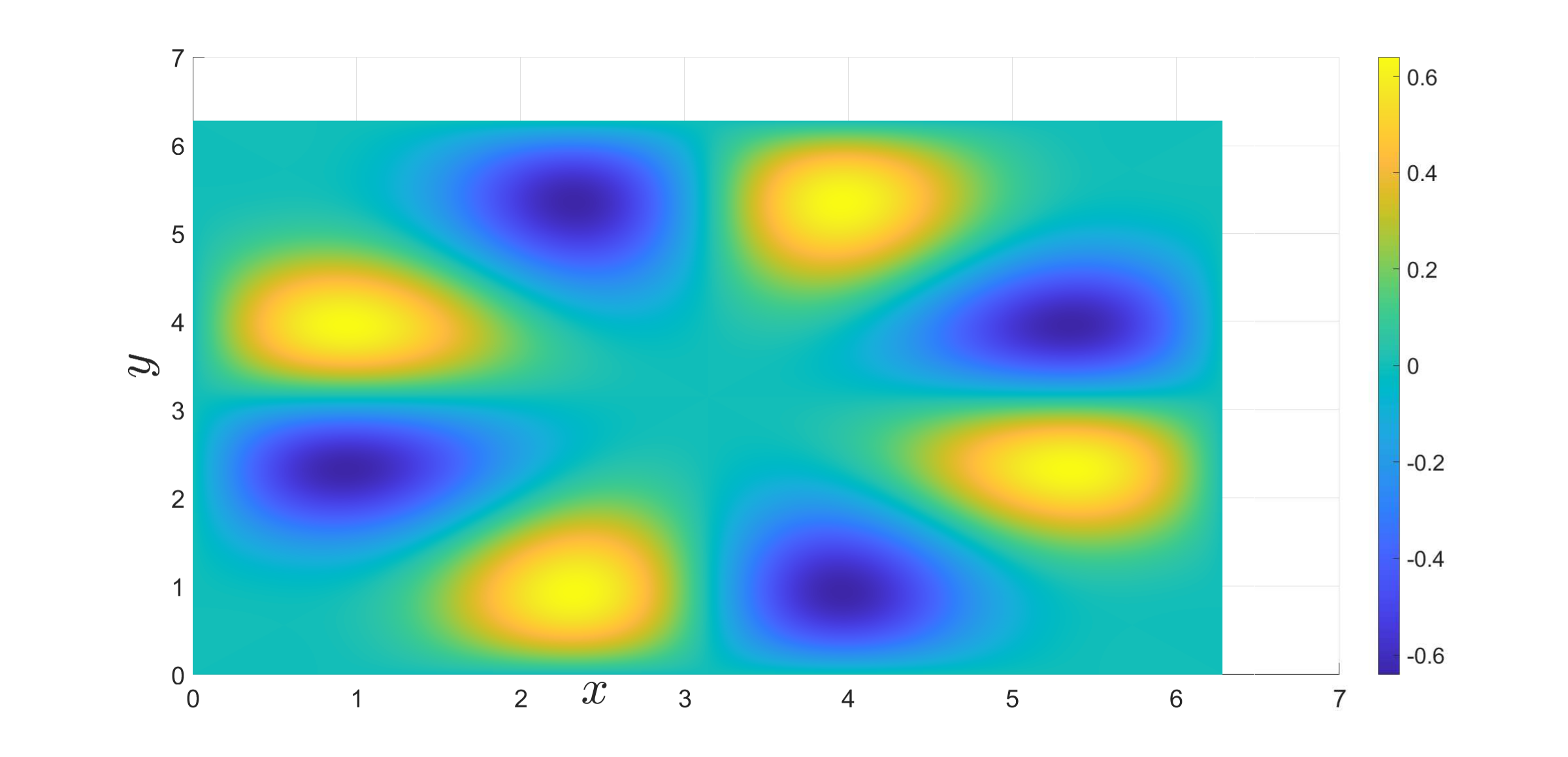}
  \end{minipage}
   \begin{minipage}{.52\textwidth}
   \centering
  \includegraphics[clip,width=1\textwidth]{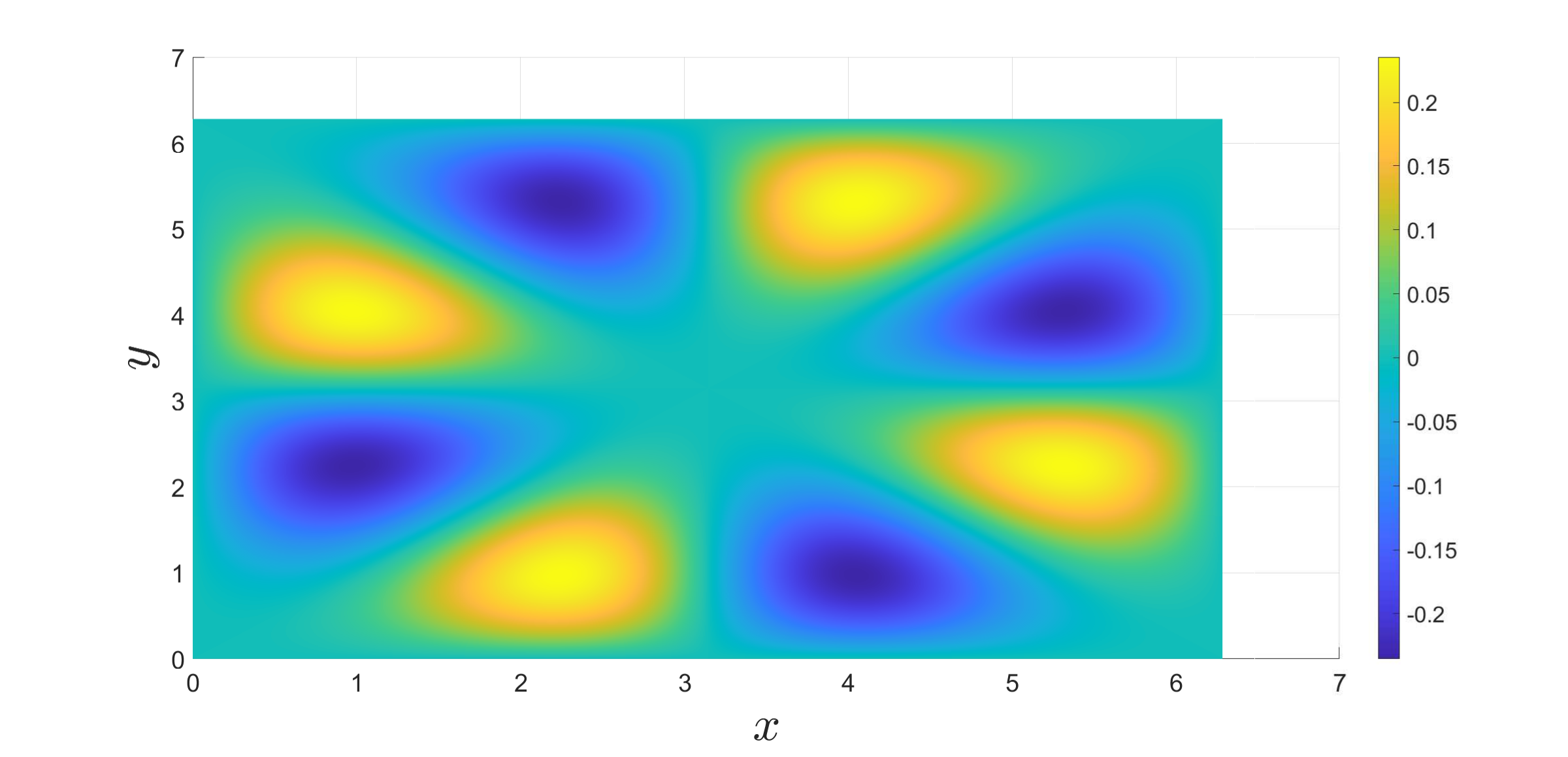}
  \end{minipage}%
  \begin{minipage}{.52\textwidth}
    \centering
   \includegraphics[clip,width=1\textwidth]{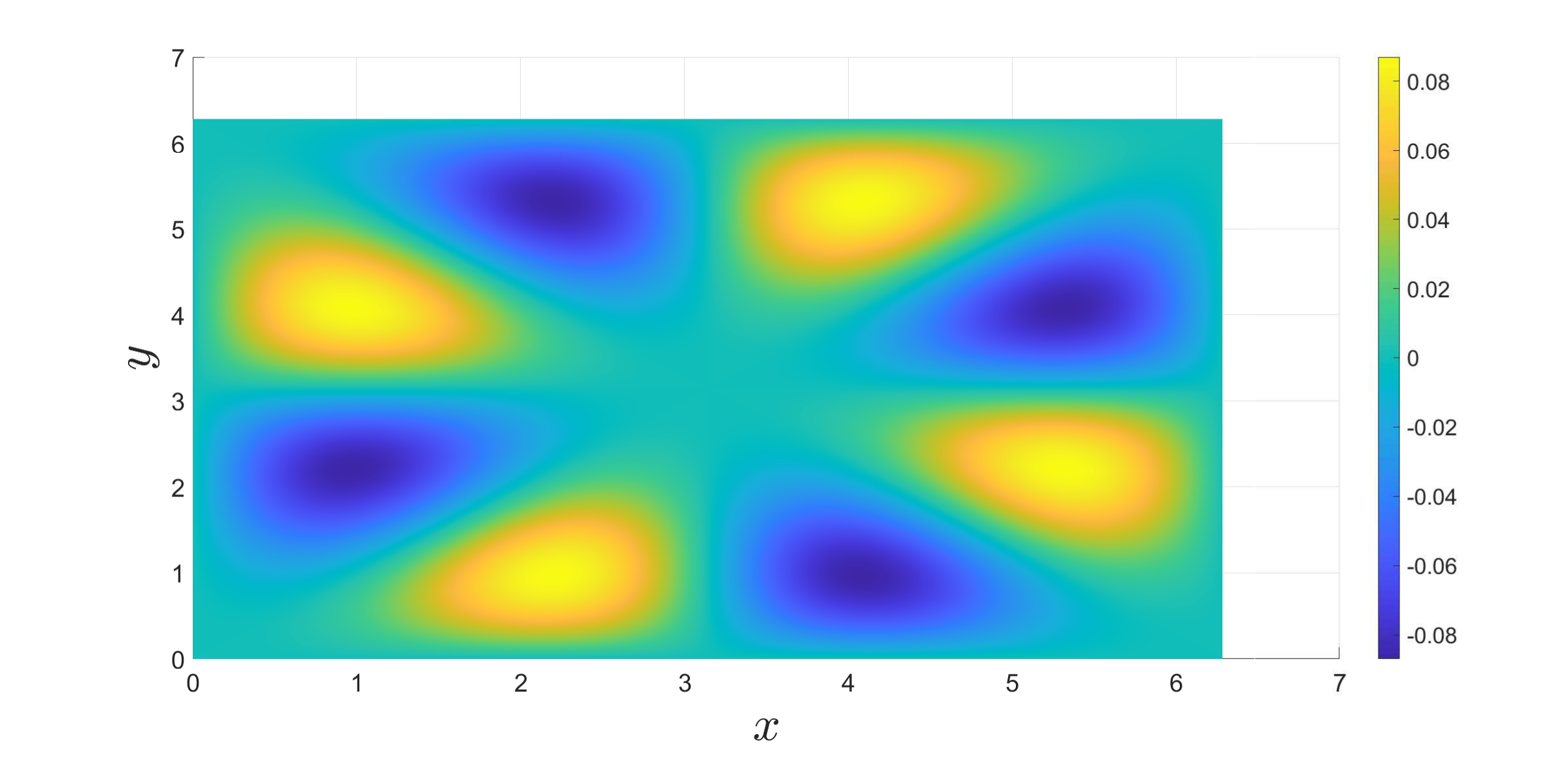}
  \end{minipage}
  \vspace{-.5cm}
  \caption{Numerical approximation for the 2D Navier-Stokes initial value problem at times $t=0$ (up-left), $t=0.96$ (up-right), $t=1.92$ (bottom-left) and $t=2.91$ (bottom-right).}
    \label{fig : NS}
  \end{figure}

\begin{theorem}\label{th : proof in NSE}
   Let $b(x) \bydef 2\sin(2x_1)\sin(x_2) - 2 \sin(x_1)\sin(2x_2) + 0.8\sin(x_1)\sin(3x_2) - 0.8 \sin(3x_1)\sin(x_2)  + 1.2\sin(2x_1)\sin(3x_2) - 1.2 \sin(3x_1)\sin(2x_2)$ and let $\nu =0.2$. Moreover, let $h \bydef 2.91$ and $r_0 \bydef 4.54 \times 10^{-5}$, then there exists a unique smooth solution $\tilde{\omega}$ to \eqref{eq : NS vorticity}  and $\|\tilde{w} - \overline{w}\|_\infty \leq r_0$. In fact, the solution $\tilde{w}$ exists for all positive times and we have that $\tilde{w}(t) \to 0$ as $t \to \infty.$
\end{theorem}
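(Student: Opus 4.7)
The plan is to combine a local Newton-Kantorovich existence proof on the interval $[0,h]$ with the global existence criterion of Corollary~\ref{Cor : global existence}, applied at time $t=h$ to the rigorously enclosed solution $\tilde\omega(h,\cdot)$.

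First I would set up the zero-finding problem $F(W)=0$ exactly as in \eqref{eq : zero finding navier stokes}, exploiting the sine-sine symmetry imposed by the initial datum $b$ to index $W$ over $\mathbb{N}^2\times\mathbb{N}_0$ only. I would then choose truncation levels $N_1,N_2$ (driven by the observed decay of the Fourier-Chebyshev coefficients of a non-rigorous numerical solution $\overline W=\pi^{\leq N}\overline W$) and compute numerically both $\overline W$ and an approximate inverse $A_0\approx(\pi^{\leq N}+\pi^{\leq N}\mathcal{K}\pi^{\leq N})^{-1}$, yielding $A$ via \eqref{def : operator A}.

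Next I would assemble the Newton-Kantorovich bounds $Y$, $Z_1$, $Z_2$ via Lemmas~\ref{lem : Y bound} and \ref{lem : Z1 bound} together with the $Z_2$ lemma. This requires the tail decay functions $C_0,C_{1,n},C_{A_0,n}$: for $\mu_k\in[0,\mu^*]$ with $\mu^*=3000$ one tabulates them rigorously with interval arithmetic via Lemmas~\ref{lem:bounding_invLk_small_k} and \ref{lem : range of values of mu}, while for $\mu_k>\mu^*$ one uses the explicit constants $C_0^*=C_1^*=15.71$ from Lemma~\ref{lem : C0 and C1}. The diagonal-times-$DQ(\overline W)$ operator norms appearing in $Z_\infty$, $Z_{\leq N}$ and $Z_{A_0}$ reduce, thanks to the factored form of the nonlinearity (Lemma~\ref{lem : non lin navier stokes}), to matrix computations on the Fourier-Chebyshev representation of the bilinear pairing, following the strategy already used in \cite{navier-stokes}. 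Verifying \eqref{eq : condition contraction} for $r=r_0=4.54\times10^{-5}$ then delivers a unique zero $\tilde W\in\overline{B_{r_0}(\overline W)}\subset X$ of $F$. Smoothness of the associated $\tilde\omega$ on $[0,h]\times\Omega$ follows from Lemma~\ref{lem : regularity} together with Remark~\ref{rem : regularity}, since $\mathbb{L}=\nu\Delta$ is second order while the nonlinearity factors through the first-order operators $\partial_{x_1},\partial_{x_2}$, and $b$ is a trigonometric polynomial.

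For the global-in-time assertion I would evaluate $\tilde\omega$ at $t=h$ using $T_n(1)=1$, giving the Fourier coefficients $\tilde\omega_k(h)=\tilde W_{k,0}+2\sum_{n\geq1}\tilde W_{k,n}$; combined with the enclosure $\|\tilde W-\overline W\|_{1,\omega}\leq r_0$, this produces a rigorous upper bound on $\|\tilde\omega(h,\cdot)\|_1$. Then, treating $\tilde\omega(h,\cdot)$ as a new initial datum for \eqref{eq : NSE full vorticity}, I would numerically optimize the parameters $h_1$ and $\epsilon^*$ of Corollary~\ref{Cor : global existence} and verify both inequalities in \eqref{eq : condition on the norm of b} in interval arithmetic. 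The corollary then furnishes a unique mild solution on $[h,\infty)$ tending uniformly to $0$, and concatenation with the local solution on $[0,h]$ gives the global smooth solution, with $\tilde\omega(t)\to0$ as $t\to\infty$.

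The principal obstacle is the tension between Steps 2 and 4: the time step $h=2.91$ is deliberately large, so the dissipative decay $(2+\mu_k)^{-1}$ from \eqref{eq:new_tail_estimate} must be exploited to the full to keep $Z_1$ and $Z_2$ below the contraction threshold; yet at the same time $h$ must be chosen so that the rigorously computed norm $\|\tilde\omega(h,\cdot)\|_1$ falls inside the trapping region of Corollary~\ref{Cor : global existence}. A secondary subtlety is that the Leray projector $\Delta^{-1}\nabla\times$ is defined only on the zero-mean subspace; the sine-sine ansatz handles this automatically, but one must track this restriction consistently through the bound computations.
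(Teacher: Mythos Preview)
Your overall architecture---local Newton--Kantorovich on $[0,h]$ followed by the trapping-region argument of Corollary~\ref{Cor : global existence} at $t=h$, with regularity via Lemma~\ref{lem : regularity} and Remark~\ref{rem : regularity}---matches the paper exactly. The discrepancy is in how the local step is carried out.

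You propose a \emph{single} application of Theorem~\ref{th : radii polynomial} over the full interval $[0,2.91]$, and you flag the resulting tension in your last paragraph. The paper does not do this: it invokes the multi-stepping scheme of Appendix~\ref{sec : appendix}, performing $73$ steps of size $0.03$ followed by one final step of size $0.72$, with $N_1=15$, $N_2=10$ on each step, and the quoted $r_0=4.54\times10^{-5}$ is the \emph{accumulated} error after all $74$ steps. The remark following the theorem explains why the last step can be so much larger: by that point the solution amplitude has decayed enough that the bounds close easily. For the early part of the trajectory, however, $\|b\|_1$ is of order $8$ and $\nu=0.2$, so with $h=2.91$ in one shot the quantities $Z_\infty$, $Z_{\leq N}$, $Z_{A_0}$ (each linear in $h$ and in the size of $DQ(\overline W)$) would be far too large for $Z_1<1$; this is presumably why the authors resorted to many small steps rather than the single-step strategy they use for Swift--Hohenberg and Kuramoto--Sivashinsky. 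So the obstacle you identify is real, and the paper resolves it not by squeezing more out of the decay estimate \eqref{eq:new_tail_estimate} but by time-domain subdivision with controlled error propagation (the rate $\max_k\|W_0A(\mathcal{L}_k^{-1})_{col(0)}\|_1/(1-Z_1)$ discussed at the end of the appendix).

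A minor secondary point: in the global-existence step the paper verifies the concrete threshold $\|b\|_1\le 0.015$ from Corollary~\ref{Cor : global existence} and then checks $r_0+\|\overline W(h)\|_1\le 0.015$; your description of optimizing $h_1,\epsilon^*$ and checking \eqref{eq : condition on the norm of b} is correct in spirit but the paper fixes this threshold once rather than re-optimizing.
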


\begin{proof}
    The existence of the solution is obtained combining Theorem \ref{th : radii polynomial} and the estimations derived in Section \ref{sec : computation of the bounds}. We use the explicit formulas \eqref{def : definition of Z1}, \eqref{def : definition of Y0} and \eqref{def : definition of Z2} for computing the bounds $Z_1, Y$ and $Z_2$.  Furthermore, we use the multi-stepping approach presented in Appendix \ref{sec : appendix}. On each time step, the approximate solution is constructed with $N_1 = 15$ and $N_2 =10$, and the space $X$ is chosen to be $\ell^1$. We perform 73 time steps of size $0.03$ and one final step of size $0.72$.  The accumulated error provides a radius of existence  $r_0 =  4.54 \times 10^{-5}$. For the global existence, we prove, using  Corollary \ref{Cor : global existence}, that if $\|b\|_1 \leq 0.015$, then the associated IVP \eqref{eq : NSE full vorticity} possesses a global solution in time that vanishes at infinity.
     Let $\widetilde{W}(h)$ (resp. $\overline{W}(h)$) be the Fourier series representation of $\tilde{w}(h)$ (resp. $\overline{w}(h)$). Then we have that $\|\widetilde{W}(h)\|_1 \leq r_0 + \|\overline{W}(h)\|_1$. 
    Finally, replacing $\|b\|_1$ by $r_0 + \|\overline{W}(h)\|_1$ in Corollary \ref{Cor : global existence}, we verify that  $r_0 + \|\overline{W}(h)\|_1 \leq 0.015$ and obtain the desired conclusion. The regularity of the solution is a  consequence of the bootstrapping argument exposed in Lemma \ref{lem : regularity} combined with Remark \ref{rem : regularity} and Lemma \ref{lem : non lin navier stokes}.
\end{proof}

\begin{remark}
    Note that in the proof of Theorem \ref{th : proof in NSE}, the last time of integration is much bigger than the previous ones. Indeed, as we integrate forward in time, the amplitude of the solution decreases, leading to smaller and smaller bounds for Theorem \ref{th : radii polynomial}. In particular, when the amplitude is close enough to zero, we can reached the desired integration time with a final bigger step.
\end{remark}

\subsection{The Swift-Hohenberg PDE}

Let $\alpha >1$, we consider the following form of the 1D Swift-Hohenberg PDE
\begin{equation}\label{eq : swift hohenberg pde}
      u_t = (\alpha-1)u - 2u_{xx} - u_{xxxx} - u^3, ~~ u(0,x) = b(x)
\end{equation}
where $u$ and $b$ are $2\pi$-periodic in space. Equation \eqref{eq : swift hohenberg pde} is a well-established model for thermal convection, particularly in the context of Rayleigh-Bénard convection. Moreover, it plays a fundamental role in the study of pattern formation across various fields, including phase-field crystals \cite{phase-crystal}, magnetizable fluids \cite{GROVES20171}, and nonlinear optics \cite{Odent:2016gli}. In this section, we seek to reproduce the existence proof presented in Section 5.3 of \cite{jacek_integration_cheb}. This comparison serves to illustrate the ability to perform long-time integrations without requiring a decomposition of the temporal domain. Specifically, we numerically construct an approximate solution $\oU$ (depicted in Figure~\ref{fig : sh}) as given by \eqref{eq : expansion solution} with $N_1 = 24, N_2 = 150$ and obtain the following result.

 \begin{figure}[h!]
\centering
 \begin{minipage}[H]{0.6\linewidth}
  \centering\epsfig{figure=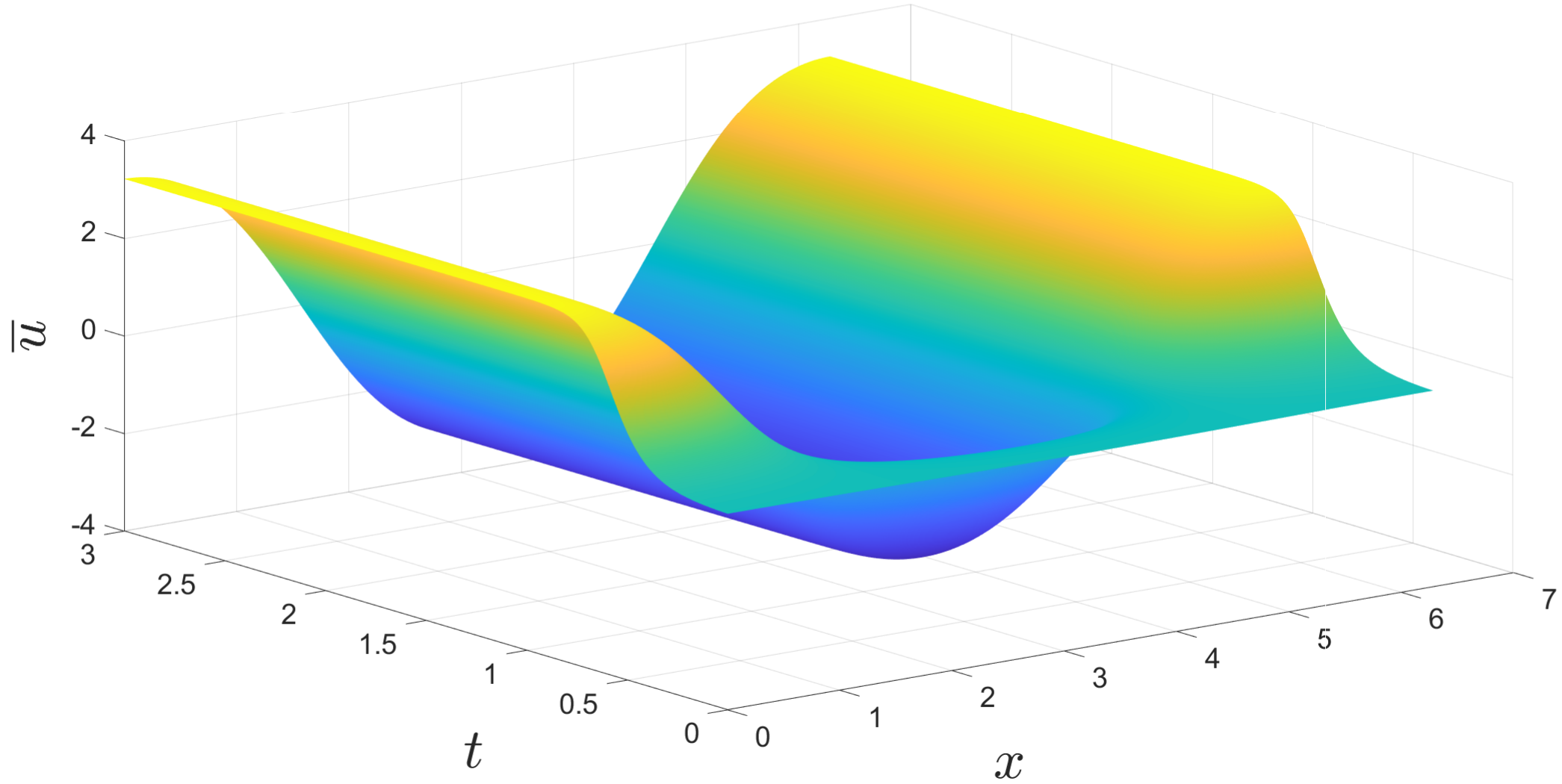,width=
  \linewidth}
  \vspace{-.4cm}
  \caption{Approximation $\overline{u}$ for the Swift-Hohenberg IVP.}
  \label{fig : sh}
 \end{minipage} 
 \end{figure}

\begin{theorem}\label{th : proof in SH}
Let $b$ be given as $b(x) \bydef 0.02\cos(x)$. Moreover, let $h \bydef 3$ and $r_0 \bydef 7.053 \times 10^{-10}$. Then there exists a unique smooth solution $\tilde{u}$ to \eqref{eq : swift hohenberg pde} and $\|\tilde{u} - \overline{u}\|_\infty \leq r_0$.
\end{theorem}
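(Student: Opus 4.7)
The plan is to apply the Newton-Kantorovich theorem (Theorem~\ref{th : radii polynomial}) to the zero-finding map $F$ associated to \eqref{eq : swift hohenberg pde}. First I would verify Assumptions~\ref{ass : assumption on L} and \ref{ass : assumption on Q}: writing $\mathbb{L}u = (\alpha-1)u - 2u_{xx} - u_{xxxx}$ gives Fourier eigenvalues $\lambda_k = (\alpha-1) + 2k^2 - k^4$, which are bounded above (with $\lambda_{\sup}=\alpha$ attained at $k=\pm 1$) and tend to $-\infty$ quartically, while the nonlinearity $\mathbb{Q}(u) = -u^3$ fits Assumption~\ref{ass : assumption on Q} with $p = 3$ and each $\mathbb{Q}_j$ a signed identity, trivially satisfying \eqref{eq : assumption on non linear term}. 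I would work in the Banach algebra $X$ of \eqref{def : banach space X} with weight parameters $\nu = 1$, a small $s_1 > 0$, and $s_2 = 4$ (matching the order of $\mathbb{L}$), so that Lemma~\ref{lem : regularity} and Remark~\ref{rem : regularity} apply.

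The main computational step is to construct numerically an approximate zero $\overline{U} \in \pi^{\leq N} X$ at $(N_1, N_2) = (24, 150)$ by Newton's method, together with $A_0 \approx \pi^{\leq N}(I + \mathcal{K})^{-1}\pi^{\leq N}$, and to assemble $A$ via \eqref{def : operator A}. The decay functions $C_0(\mu_k)$, $C_{1,n}(\mu_k)$, $C_{A_0,n}(\mu_k)$ satisfying \eqref{eq : ineq C0 C1n CA0n} are obtained from Section~\ref{ssec : computation for a bounded range of mu} for $\mu_k \leq \mu^* = 3000$ (using Lemmas~\ref{lem:bounding_invLk_small_k} and \ref{lem : range of values of mu}) and from Lemma~\ref{lem : C0 and C1} for $\mu_k > \mu^*$. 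With these inputs, the bounds $Y$, $Z_1$, $Z_2(r)$ of Lemmas~\ref{lem : Z1 bound}, \ref{lem : Y bound} and the $Z_2$-lemma reduce to matrix-norm computations on the projected blocks plus explicit tail sums, all carried out in interval arithmetic. Note that the tail $Y_1$ is a finite sum because $\phi = 0$ and $\overline{U}$ has Fourier support in $|k|_\infty \leq N_1$, so $Q(\overline{U})_k = -\overline{U}^{*3}_k$ vanishes for $|k|_\infty > 3N_1$. The final step is to verify \eqref{eq : condition contraction} at $r = r_0 = 7.053 \times 10^{-10}$, which produces a unique zero $\widetilde{U}$ in $\overline{B_{r_0}(\overline{U})} \subset X$. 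The bound $\|\widetilde{u} - \overline{u}\|_\infty \leq \|\widetilde{U} - \overline{U}\|_{1,\omega} \leq r_0$ follows from the continuous embedding of $X$ into $C^0([0,h]\times\Omega)$.

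For regularity, Lemma~\ref{lem : regularity} gives continuous differentiability in time, and the bootstrap in Remark~\ref{rem : regularity} applies verbatim: $\phi = 0$, and $\mathbb{Q}(u) = -u^3$ is of the form $\mathbb{L}_Q \mathbb{Q}_0(u)$ with $\mathbb{L}_Q = -\mathrm{Id}$ of order $0 < 4 = \mathrm{ord}(\mathbb{L})$ and $\mathbb{Q}_0(u) = u^3$. Since $b(x) = 0.02\cos(x)$ is analytic, the bootstrapping upgrades $\widetilde{u}$ to $C^\infty$, and the parabolic uniqueness statement cited in the same remark guarantees that $\widetilde{u}$ is the unique classical solution of the IVP.

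The principal obstacle is quantitative rather than structural: driving the existence radius down to $r_0 \sim 10^{-10}$ in one time step of length $h = 3$ demands that the bounds $Z_\infty$, $Z_{\leq N}$, $Z_{A_0}$ and $Y_3$ remain very small despite being linear in $h$, and that $Z_1 = \mathcal{O}(h^2)$ (cf.\ Remark~\ref{rem : remark Z1}) is well below $1$. This is precisely where the sharp tail estimate \eqref{eq:new_tail_estimate} encoded in $C_0$ and $C_1$ pays off, since it forces the Fourier-tail operator $\mathcal{L}^{-1}\mathbf{\Lambda}$ to be small uniformly in $h$. One must also verify that $N_2 = 150$ Chebyshev modes suffice for $Y_0 = \|A_0 F(\overline{U})\|_{1,\omega}$ and $Z_0$ to reach the required precision; the rapid (super-geometric) Chebyshev decay predicted by Lemma~\ref{lem : estimates entries of Tinv with g} makes this achievable in a single step, which is the improvement over \cite{jacek_integration_cheb}.
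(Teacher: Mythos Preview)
Your proposal is correct and follows the paper's approach: apply Theorem~\ref{th : radii polynomial} via the bounds of Lemmas~\ref{lem : Z1 bound}--\ref{lem : Y bound} and the $C_0, C_1$ estimates of Section~\ref{sec : computation of C0 C1}, verify \eqref{eq : condition contraction} in interval arithmetic, then upgrade regularity through Remark~\ref{rem : regularity}. The only deviation is that the paper works in the unweighted space $X=\ell^1$ (reporting $Y\approx 7.84\times 10^{-11}$, $Z_1\approx 0.89$, $Z_2(r_0)\approx 587$) and obtains smoothness and uniqueness from the bootstrapping half of Remark~\ref{rem : regularity}, rather than from your weighted choice $s_1>0$, $s_2=4$; either choice carries the argument.
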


\begin{proof}
   Choosing $X = \ell^1$, the existence of the solution is obtained combining Theorem \ref{th : radii polynomial} and the formulas \eqref{def : definition of Z1}, \eqref{def : definition of Y0}, \eqref{def : definition of Z2} for the bounds $Z_1, Y$ and $Z_2$. In particular, choosing $r_0 = 7.053 \times 10^{-10}$ we obtained the following values
    \begin{align*}
        Y = 7.84 \times 10^{-11},~ Z_1 = 0.89 , ~ Z_2(r_0) = 587 
    \end{align*}
    and verified that \eqref{eq : condition contraction} is satisfied. Finally, the uniqueness and the regularity of the solution are obtained thanks to Remark \ref{rem : regularity}. 
\end{proof}

\subsection{The Kuramoto-Sivashinsky PDE}

Let $\alpha >0$ and consider the one dimensional Kuramoto-Sivashinsky PDE
\begin{equation}\label{eq : kuramoto Sivashinsky pde}
    u_t = -u_{xx} - \alpha u_{xxxx} - 2uu_x, ~~ u(0,x) = b(x)
\end{equation}
where $u$ and $b$ are $2\pi$-periodic in space.  

The objective of this subsection is to demonstrate that the proposed integration method can facilitate existence proofs for periodic solutions in time. Specifically, we aim to integrate over the period of an approximate periodic solution, thereby illustrating the feasibility of establishing existence results. To this end, we consider the approximate periodic solution $u_p$ given in \cite{MR2049869} for $\alpha = 0.127$. Our goal is to integrate forward in time from $u_p(0, \cdot)$ over a full period. It is important to note that we do not rigorously establish the existence of a periodic orbit, as we do not prove an exact return to the initial condition. Rather, we emphasize that the method enables integration over an entire period. To formally prove the existence of a periodic orbit, one would need to formulate a zero-finding problem in which both the period $h$ and the periodic condition $u(0,x) = u(h,x)$ for all $x \in \Omega$ are enforced. It is worth noting the existence of fully spectral (Fourier-Fourier) computer-assisted methods that have been employed to prove the existence of periodic orbits \cite{MR3662023, MR3623202, MR3633778, navier-stokes}.

We fix $b(x) = u_p(0,x)$ for all $x \in (-\pi,\pi)$. 
Then, we construct numerically $\overline{U}$, an approximate zero of $F$, with $N_1 = 31, N_2 = 80$ and denote $\overline{u}$ its function representation (illustrated in Figure \ref{fig : KS}). Finally, we obtain the following result.

 \begin{figure}[h!]
  \centering
  \begin{minipage}{.52\textwidth}
   \centering
  \includegraphics[clip,width=1\textwidth]{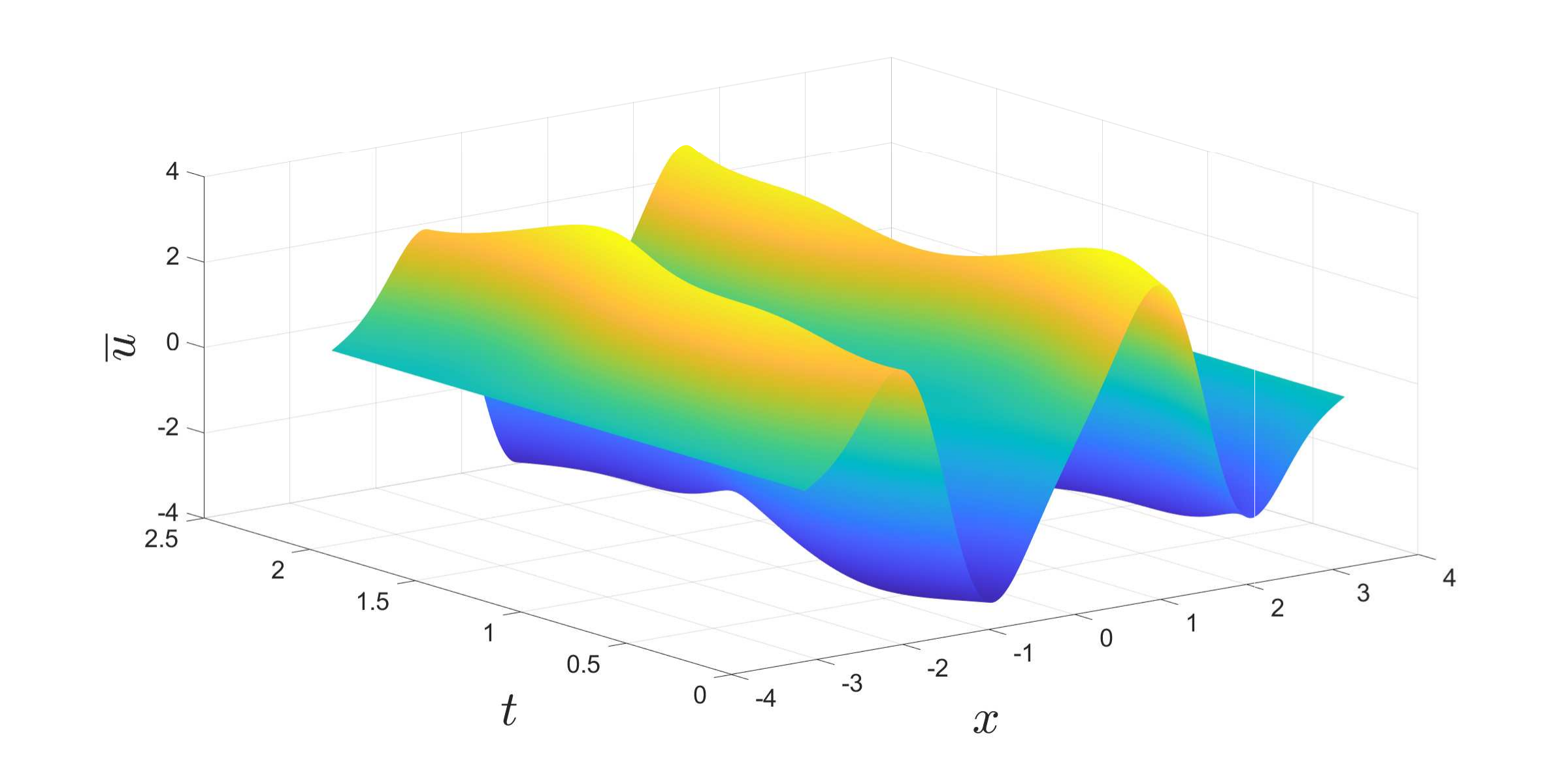}
  \end{minipage}%
  \begin{minipage}{.52\textwidth}
    \centering
   \includegraphics[clip,width=1\textwidth]{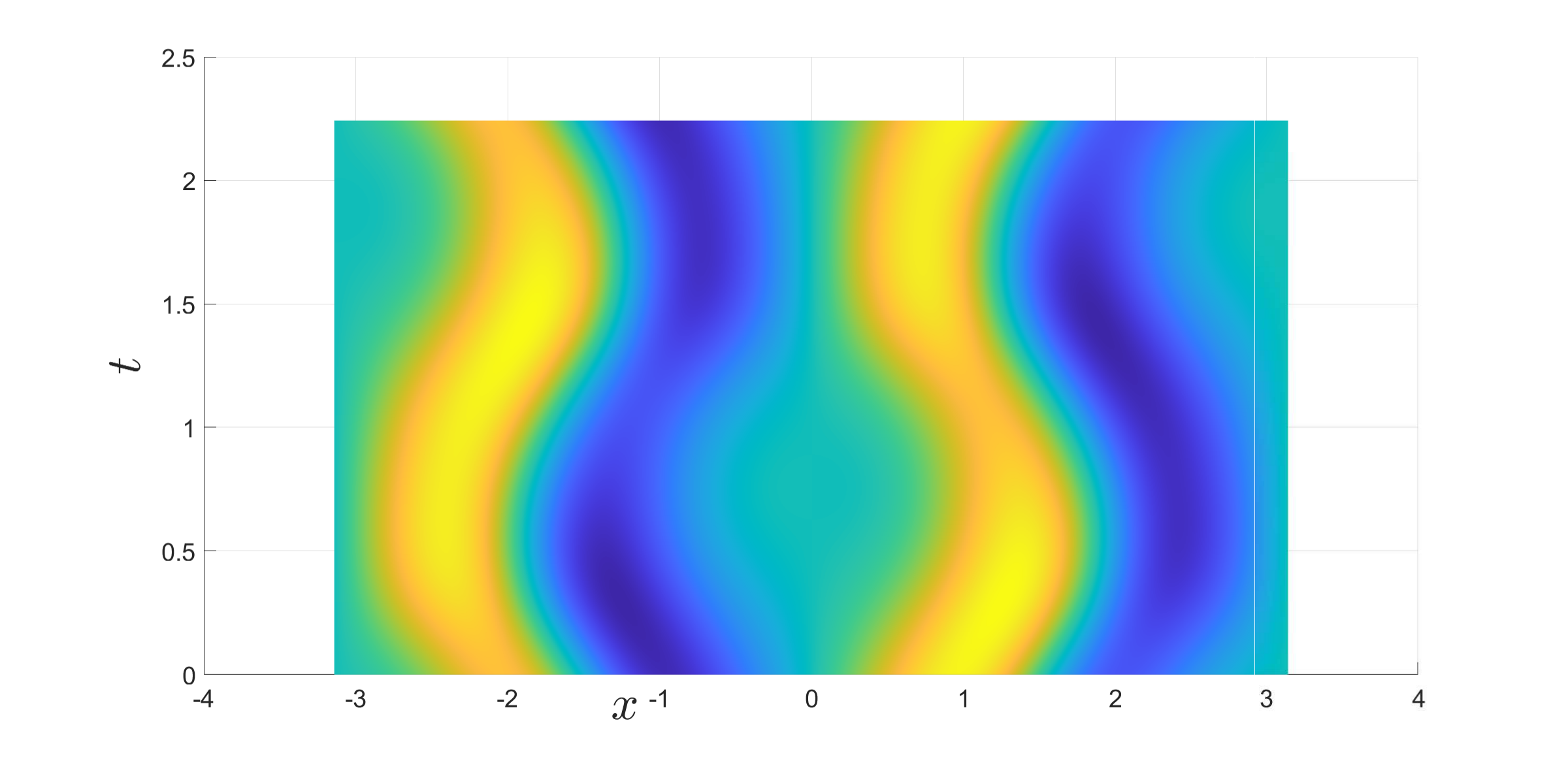}
  \end{minipage}
  \caption{Numerical approximation for the Kuramoto-Sivashinski initial value problem.}
    \label{fig : KS}
  \end{figure}

\begin{theorem}\label{th : proof in KS}
    Let $h \bydef 2.244333563761175$ and $r_0 \bydef 3.49 \times 10^{-10}$, then there exists a unique smooth solution $\tilde{u}$ to \eqref{eq : kuramoto Sivashinsky pde}  and $\|\tilde{u} - \overline{u}\|_\infty \leq r_0$.
\end{theorem}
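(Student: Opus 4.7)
The plan is to mirror the strategy used for Theorems \ref{th : proof in NSE} and \ref{th : proof in SH}, applying the Newton--Kantorovich machinery of Theorem \ref{th : radii polynomial} in the Banach space $X=\ell^1$. First, I would cast \eqref{eq : kuramoto Sivashinsky pde} in the form \eqref{eq : initial pde}, with the elliptic operator $\mathbb{L}u=-u_{xx}-\alpha u_{xxxx}$ having eigenvalues $\lambda_k=k^{2}-\alpha k^{4}$ (so that $\lambda_k\to-\infty$ and $\lambda_{\sup}<\infty$, satisfying Assumption~\ref{ass : assumption on L}), and the nonlinearity $\mathbb{Q}(u)=-2uu_x=-\partial_x(u^{2})$. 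Factorising $\mathbb{Q}(u)=\mathbb{Q}_1(u)\mathbb{Q}_2(u)$ with $q_{1,k}=-2ik$ and $q_{2,k}=1$ realises the hypothesis $q_{j,k}/\lambda_k\to 0$ required by Assumption~\ref{ass : assumption on Q}, so the entire framework of Sections~\ref{sec:formulation}--\ref{sec : computation of C0 C1} applies verbatim; in particular $F$ as defined in \eqref{def : definition of F} is well posed on $X=\ell^1$ with $s_1=s_2=0$, $\nu=1$.

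Next, I would use the numerically computed approximation $\overline{U}$ of size $(N_1,N_2)=(31,80)$ as the centre of the Newton--Kantorovich ball, together with a numerically constructed approximate inverse $A_0$ of $\pi^{\le N}(I_d+\mathcal{K})\pi^{\le N}$, and assemble $A$ via \eqref{def : operator A}. The bounds $Y$, $Z_1$, $Z_2$ are then obtained from the explicit formulas \eqref{def : definition of Y0}, \eqref{def : definition of Z1}, \eqref{def : definition of Z2}. The inputs to those formulas are (i) the precomputed tabulated values of $C_0(\mu)$, $C_{1,n}(\mu)$, $C_{A_0,n}(\mu)$ produced in Sections \ref{ssec : computation for a bounded range of mu}--\ref{sec : tridiagonal Tk} and finalised by Lemma~\ref{lem : C0 and C1} for $\mu\ge 3000$, and (ii) rigorous interval-arithmetic evaluations of matrix norms of the truncated operators $\mathcal{D}_{C_0}DQ(\overline{U})$, $\mathcal{D}_{C_1}DQ(\overline{U})$, $\mathcal{D}_{A_0}DQ(\overline{U})$, which here carry the additional $ik$ factor coming from $\mathbb{Q}_1$ but are still diagonal multiplications of $DQ(\overline{U})$ as in Remark~\ref{rem : remark Z1}. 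The claim is then that the computed values satisfy $\tfrac12 Z_2(r_0)r_0^{2}-(1-Z_1)r_0+Y<0$ and $Z_1+Z_2(r_0)r_0<1$ for $r_0=3.49\times10^{-10}$, which by Theorem~\ref{th : radii polynomial} yields a unique zero $\widetilde{U}\in\overline{B_{r_0}(\overline{U})}\subset X$ of $F$, and therefore by Lemma~\ref{lem : regularity} a strong solution $\widetilde{u}$ of \eqref{eq : kuramoto Sivashinsky pde} with $\|\widetilde{u}-\overline{u}\|_\infty\le\|\widetilde{U}-\overline{U}\|_{1,\omega}\le r_0$.

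Uniqueness and $C^\infty$ smoothness follow from Remark~\ref{rem : regularity}: the Kuramoto--Sivashinsky linear part is of order $s=4$, the nonlinearity $-\partial_x(u^{2})$ is of the form $\mathbb{L}_Q\mathbb{Q}_0(u)$ with $\mathbb{L}_Q=-\partial_x$ of order strictly less than $4$, and the initial datum $b=u_p(0,\cdot)$ is analytic, so the standard bootstrapping argument upgrades the $\ell^1$ zero to a smooth solution and uniqueness follows from the Cauchy problem theory for semilinear parabolic equations.

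The main obstacle I anticipate is the computational one: the integration time $h\approx 2.24$ is comparatively long (a full period of the approximate orbit from \cite{MR2049869}), so $N_2=80$ Chebyshev modes are needed to resolve the trajectory, and the derivative in the nonlinearity amplifies high Fourier modes, forcing $N_1=31$ and tight control on the decay estimates \eqref{eq:new_tail_estimate}. The delicate point is therefore to confirm that the precomputed $C_0$, $C_{1,n}$, $C_{A_0,n}$ from Section~\ref{sec : computation of C0 C1}, when combined with the derivative factor $ik$ in $DQ(\overline{U})$, still yield $Z_1<1$ and a successful contraction; this is exactly where the tail decay in Lemma~\ref{lem : C0 and C1} is crucial, and where the improvement over \cite{jacek_integration_cheb} (which could not handle derivatives in the nonlinearity) is genuinely used. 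If this check succeeds numerically, Theorem \ref{th : radii polynomial} closes the argument without any further analytical input.
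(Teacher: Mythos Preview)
Your proposal is correct and follows essentially the same route as the paper: apply Theorem~\ref{th : radii polynomial} in $X=\ell^1$ with the bounds $Y,Z_1,Z_2$ computed via \eqref{def : definition of Y0}, \eqref{def : definition of Z1}, \eqref{def : definition of Z2}, and then invoke Remark~\ref{rem : regularity} for smoothness and uniqueness. The paper's proof additionally reports the specific interval-arithmetic outputs $Y=6.81\times 10^{-11}$, $Z_1=0.81$, $Z_2(r_0)=3.57\times 10^{3}$, for which \eqref{eq : condition contraction} is verified at $r_0=3.49\times 10^{-10}$.
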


\begin{proof}
    Choosing $X = \ell^1$, the proof is obtained similarly as the one of Theorem \ref{th : proof in SH}. In particular, choosing $r_0 = 3.49 \times 10^{-10}$ we obtained the following bounds 
    \begin{align*}
        Y = 6.81\times 10^{-11},~ Z_1 = 0.81,~ Z_2(r_0) = 3.57 \times 10^3 
    \end{align*}
    and verified that \eqref{eq : condition contraction} is satisfied.
\end{proof}

\section{Conclusion}


In this manuscript, we demonstrate how a careful analysis of Fourier-Chebyshev series enables the solution of initial value problems in parabolic partial differential equations with long integration times. In particular, we provide a precise analysis of the inverse of the operator $\mathcal{L}$, establishing the necessary notion of compactness for applying Theorem \ref{th : radii polynomial}. This approach serves as a foundational framework for tackling more complex problems. For example, the treatment of the 2D Navier-Stokes equations in Section \ref{ssec : NSE} opens the possibility of addressing IVPs in more intricate dynamical systems. In particular, we are actively investigating a similar framework for the 3D version of the equations, which may lead to the establishment of global existence for given initial data. Another area of interest is the study of connections between steady states of parabolic PDEs. Specifically, given two steady states $u_1$ and $u_2$, we aim to prove the existence of a solution $u$ such that $u(-\infty) = u_1$ and $u(+\infty) = u_2$. This extension is currently under investigation.\textbf{}

\section{Appendix}\label{sec : appendix}

\subsection{Computation of \boldmath$\mathcal{L}_k^{-1}  V$\unboldmath}

Given $k \in \mathbb{Z}^m$, we demonstrate how to compute the quantity $\mathcal{L}_k^{-1}  V$ where $V \in \ell^1(\mathbb{N}_0)$ only has a finite number of non-zero elements. In particular, we assume that $V_n = 0$ for all $n \geq N$, for some $N \in \mathbb{N}$. Using \eqref{eq : Linv times Lambda}, notice that it is enough to compute $T^{-1}_k  \tilde{V}$ for some  $\tilde{V} \in \ell^1(\mathbb{N})$, where $\tilde{V}_n = 0$ for all $n \geq N$. In the rest of the section, we drop the dependency on $k$ for simplicity. Without loss of generality, we can study $T^{-1}V$ for $V \in \ell^1(\mathbb{N})$ and $V_n =0$ for all $n \geq N$ (we drop the tilde notation for simplicity as well).

First, recalling the definition of $T(N)$ in \eqref{def : T(n)}, notice that $T$ can be decomposed as follows
\vspace{-.2cm}
{\small
\begin{equation}
    T = \begin{pmatrix}
        T(N) & \mu M_1\\
        - \mu M_2 & T_\infty
    \end{pmatrix}, 
\end{equation}
}
where 
\vspace{-.3cm}
{\tiny
\[
M_1 = \begin{pmatrix}
        0 & 0 & 0 & 0 & \dots \\
       0 & 0 & 0 & 0 & \dots \\
        \vdots &  \vdots &  \vdots &  \vdots&  ~ \\
        0 & 0 & 0 & 0 & \dots \\
        -1 & 0 & 0 & 0 & \dots 
    \end{pmatrix}, ~~ M_2 = \begin{pmatrix}
        0 & 0 & \dots & 0 & 1\\
        0 & 0 & \dots & 0 & 0 \\
        0 & 0 & \dots & 0 & 0 \\
        \vdots & \vdots & ~ & \vdots & \vdots 
    \end{pmatrix}, ~~ \text{ and }  
   T_\infty \bydef \begin{pmatrix}
     2(N+1)  & -\mu  & ~       & ~      &~ \\
    \mu  & 2(N+2)  & -\mu    & ~      &~ \\
      ~  &  \mu  & 2(N+3)    & -\mu      &~ \\
    ~     & ~ & \ddots  & \ddots & \ddots\\
    \end{pmatrix}.
\]}

Let $U = T^{-1}V$, we decompose $U = (U_N, U_\infty), V = (V_N, V_\infty)$ where $U_n = (U_N)_n$ (resp. $V_n = (V_N)_n$) for all $n \in 1, \dots, N$. In particular, we have that $U$ satisfies
\[
    T(N) U_N + \mu M_1 U_\infty = V_N \quad \text{and} \quad
    - \mu M_2 U_N + T_\infty U_\infty = 0.
\]
Note that $T_\infty$ is invertible (using \eqref{eq : inverse for Tn}) and we have that 
\[
    (T(N) + \mu^2 M_1 T_\infty^{-1} M_2) U_N  = V_N \quad \text{and} \quad 
     U_\infty = \mu   T_\infty^{-1} M_2 U_N. 
\]
By construction of $M_1$ and $M_2$, we have that $\mu^2 M_1 T_\infty^{-1} M_2 = \mu^2 (T_\infty^{-1})_{1,1} e_Ne_N^T$.
Using \eqref{eq : inverse for Tn}, we know that $(T_\infty^{-1})_{1,1} >0$. Moreover, we have that $(T(N)^{-1})_{N,N} >0$. This implies that 
\[
1 + \mu^2 (T_\infty^{-1})_{1,1} e_N^T T(N)^{-1} e_N = 1 + \mu^2 (T_\infty^{-1})_{1,1}(T(N)^{-1})_{N,N} >0.
\]
Therefore, using Sherman-Morrison formula, we get
\begin{align*}
    U_N = \left(I_d - \frac{\mu^2 (T_\infty^{-1})_{1,1}}{1+\mu^2 (T_\infty^{-1})_{1,1} (T(N)^{-1})_{N,N} } T(N)^{-1}e_Ne_N^T\right) T(N)^{-1} V_N.
\end{align*}
Now, notice that 
\[
T(N)^{-1}e_Ne_N^T T(N)^{-1} V_N = \begin{pmatrix}
    0 & \cdots & 0 & (T(N)^{-1})_{col(N)} \left(T(N)^{-1} V_N\right)_{N}
\end{pmatrix},
\]
where the right hand side is a column-wise representation of the matrix. Consequently, we obtain that 
\begin{align*}
    (U_N)_n = (T(N)^{-1}V_N)_n - \frac{\mu^2 (T_\infty^{-1})_{1,1} \left(T(N)^{-1} V_N\right)_{N}}{1+\mu^2 (T_\infty^{-1})_{1,1} (T(N)^{-1})_{N,N} } (T(N)^{-1})_{n,N} 
\end{align*}
for all $n \in \{1, \dots, N\}$. In particular, we have 
\begin{equation}\label{eq : last entry of UN}
    (U_N)_N =  \frac{ \left(T(N)^{-1} V_N\right)_{N}}{1+\mu^2 (T_\infty^{-1})_{1,1} (T(N)^{-1})_{N,N} }.
\end{equation}
Since $T(N)$ is a matrix, if $\mu^2 (T_\infty^{-1})_{1,1}$ are enclosed, the entries of $U_N$ can be estimated using interval arithmetic. A similar reasoning as in Lemma \ref{lem : lemma on sequence dn} can be applied to $T_\infty.$ We obtain the following lemma.
\begin{lemma}\label{lem : estimates for T infinity}
For all $i \geq 1$, we have
    \begin{align*}
       \mu^{i-1}\prod_{l=1}^{i} \frac{1}{\alpha_{N+l}(\mu)}  \leq (T^{-1}_\infty)_{i,1} \leq \mu^{i-1}\prod_{l=0}^{i-1} \frac{1}{\alpha_{N+l}(\mu)}. 
    \end{align*}
\end{lemma}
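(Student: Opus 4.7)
The plan is to mimic the analysis of Section~\ref{sec : tridiagonal Tk} but with the tridiagonal matrix $T$ replaced by $T_\infty$, exploiting the fact that $T_\infty$ has exactly the same structure as $T$ except that its diagonal entries are shifted by $N$, starting at $2(N+1)$ instead of $2$. Since Lemmas~\ref{lem : lemma on sequence dn} and \ref{lem : estimates entries of Tinv with dn} relied only on the tridiagonal structure, the identity $d_j d_{j+1} = 2j d_{j+1} + \mu^2$ and the recurrence $d_j = 2j + \mu^2/d_{j+1}$, analogous statements should follow after the obvious index shift.

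More precisely, I would first consider, for each $n \in \mathbb{N}$ with $n$ large enough, the finite truncation $T_\infty(n) \in M_n(\R)$ consisting of the top-left $n\times n$ block of $T_\infty$, and define the recursive sequence
\begin{align*}
d_n^\infty \bydef 2(N+n), \qquad d_j^\infty \bydef 2(N+j) + \frac{\mu^2}{d_{j+1}^\infty}, \quad j = n-1, \dots, 1.
\end{align*}
The formula \eqref{eq : inverse for Tn}, applied to $T_\infty(n)$, yields that for all $i \in \{1,\dots,n\}$
\begin{align*}
(T_\infty(n)^{-1})_{i,1} = \frac{\mu^{i-1}}{d_1^\infty \cdots d_i^\infty} > 0.
\end{align*}
Next, by induction on $j$ exactly as in the proof of Lemma~\ref{lem : lemma on sequence dn}, but with $2j$ replaced throughout by $2(N+j)$, I would establish
\begin{align*}
\alpha_{N+j-1}(\mu) \leq d_j^\infty \leq \alpha_{N+j}(\mu), \qquad j=1,\dots,n,
\end{align*}
where $\alpha_j(\mu)$ is defined in \eqref{eq : def of alpha j}. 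Substituting these bounds into the formula for $(T_\infty(n)^{-1})_{i,1}$ and collapsing the telescoping products gives
\begin{align*}
\mu^{i-1}\prod_{l=1}^{i}\frac{1}{\alpha_{N+l}(\mu)} \leq (T_\infty(n)^{-1})_{i,1} \leq \mu^{i-1}\prod_{l=0}^{i-1}\frac{1}{\alpha_{N+l}(\mu)},
\end{align*}
for every $i \in \{1,\dots,n\}$.

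Finally, I would pass to the limit $n \to \infty$ to recover the statement for the infinite matrix $T_\infty$. The natural way is to observe that the recurrence defining $d_j^\infty$ is monotone: increasing $n$ only increases each $d_j^\infty$, while the upper bound $d_j^\infty \leq \alpha_{N+j}(\mu)$ is uniform in $n$. Hence the sequence of truncated inverses converges entrywise to $(T_\infty^{-1})_{i,1}$, and the inequalities, being uniform in $n$, are preserved in the limit. The main technical point to handle carefully is precisely this passage to the limit, but since $T_\infty$ has the same form as $T_\mu$ (with a shifted diagonal), the invertibility and the convergence of finite truncations follow from the same arguments already used for $T_\mu$ in Section~\ref{sec : tridiagonal Tk}.
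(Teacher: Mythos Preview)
Your approach is exactly what the paper intends: it proves the lemma by the one-line remark that ``a similar reasoning as in Lemma~\ref{lem : lemma on sequence dn} can be applied to $T_\infty$'', and you have correctly spelled this out with the index shift $j\mapsto N+j$, obtaining the uniform enclosure $\alpha_{N+j-1}(\mu)\le d_j^\infty\le\alpha_{N+j}(\mu)$ and substituting it into the exact formula for the first column.

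Two small corrections that do not affect the substance. First, formula~\eqref{eq : inverse of T second expression} actually gives $(T_\infty(n)^{-1})_{i,1}=(-\mu)^{i-1}/(d_1^\infty\cdots d_i^\infty)$, so the sign alternates with $i$ just as for $T_\mu$ (cf.~\eqref{def : definition of xi mu}); the lemma as printed should be read with absolute values, which is also how it is used immediately afterwards. Second, your monotonicity claim ``increasing $n$ only increases each $d_j^\infty$'' is false: since $d_j^\infty=2(N+j)+\mu^2/d_{j+1}^\infty$, an increase in $d_{j+1}^\infty$ forces a \emph{decrease} in $d_j^\infty$, so the effect of enlarging $n$ alternates along the recursion. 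This does not damage your argument, because the bounds $\alpha_{N+j-1}(\mu)\le d_j^\infty\le\alpha_{N+j}(\mu)$ are already uniform in $n$, and that uniformity is all you need to pass to the limit.
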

In particular, the above lemma yields
$\frac{1}{\alpha_{N+1}(\mu)} \leq (T^{-1}_{\infty})_{1,1} \leq \frac{1}{\alpha_{N}(\mu)}$,
where $\alpha_j(\mu)$ is defined in \eqref{eq : def of alpha j}. Consequently, the entries of $U_N$ can be enclosed thanks to the use of rigorous numerics. This is achieved in our code in \cite{julia_cadiot}. Now, recall that $U_\infty = \mu T^{-1}_\infty M_2 U_N$.
By definition of $M_2$, we obtain that 
\begin{align*}
    U_\infty = \mu (T^{-1}_\infty)_{col(1)} (U_N)_{N} =  \mu (T^{-1}_\infty)_{col(1)} \frac{ \left(T(N)^{-1} V_N\right)_{N}}{1+\mu^2 (T_\infty^{-1})_{1,1} (T(N)^{-1})_{N,N} }
\end{align*}
using \eqref{eq : last entry of UN}.  Consequently, this implies that 
\[
\|U_\infty\|_1 \leq \frac{\mu |\left(T(N)^{-1} V_N\right)_{N}|}{1+\mu^2 (T_\infty^{-1})_{1,1} (T(N)^{-1})_{N,N}} \|(T^{-1}_\infty)_{col(1)}\|_1.
\]
Using Lemma \ref{lem : norm of first column} and Lemma \ref{lem : estimates for T infinity}, one can compute a bound for $\|(T^{-1}_\infty)_{col(1)}\|_1$. This implies that an upper bound for $\|U_\infty\|_1$ can be computed explicitly, leading to an explicit approach for enclosing $\mathcal{L}_k^{-1}\Lambda V$.

\subsection{Integration from one time step to the next}

In this section, we show how to solve two successive initial value problems. In particular, this strategy allows to achieve multiple steps of time integration. Given $h_1 >0$, we start by integrating on $[0,h_1]$, i.e.
\begin{equation}\label{eq : equation for u}
    \begin{aligned}
    \partial_t u &= \mathbb{L} u + \mathbb{Q}(u) + \phi, \quad u = u(t,x), ~~ x \in \Omega, ~~ t \in [0,h_1]\\
    u(0,x) &= b(x), \quad \text{for all } x \in \Omega
\end{aligned}
\end{equation}
and we look for $v$ on $[h_1,h_2]$ (with $h_2 > h_1$) such that
\begin{equation}\label{eq : equation for v}
\begin{aligned}
    \partial_t v &= \mathbb{L} v + \mathbb{Q}(v) + \phi, \quad v = v(t,x), ~~ x \in \Omega, ~~ t \in [h_1,h_2]\\
    v(0,x) &= u(h_1,x), \quad \text{for all } x \in \Omega.
\end{aligned}
\end{equation}
Then constructing $\tilde{u} : [0,h_2] \times \Omega \to \R$ as 
\begin{align*}
    \tilde{u}(t,x) = \begin{cases}
        u(t,x) \text{ if } t \in [0,h_1]\\
        v(t,x) \text{ if } t \in (h_1,h_2]
    \end{cases}
\end{align*}
we obtain that $\tilde{u}$ solves 
\begin{align*}
    \partial_t \tilde{u} &= \mathbb{L} \tilde{u} + \mathbb{Q}(\tilde{u}) + \phi, \quad \tilde{u} = \tilde{u}(t,x), ~~ x \in \Omega, t \in [0,h_2]\\
    \tilde{u}(0,x) &= b(x), \quad \text{for all } x \in \Omega.
\end{align*}

Now our goal is to understand how the error is propagating from one time step to the next. First, similarly as the definition of $F$ in \eqref{def : definition of F}, define $F_1$ as 
\begin{align}\label{def : definition of F1}
    F_1(U) \bydef U   + \frac{h}{2}\mathcal{L}^{-1}\mathbf{\Lambda} Q(U)  + \frac{h}{2}\mathcal{L}^{-1} \mathbf{\Lambda} \Phi - \mathcal{L}^{-1}\beta.
\end{align}
Assume that $\oU$ is an approximate zero of $F_1$, as constructed in Section \ref{ssec : Newton Kantorovich} and that, using Theorem \ref{th : radii polynomial}, we proved that there exists $\tilde{U} \in X$ (associated to its function representation $\tilde{u}$) such that $F_1(\tilde{U}) = 0$ with $\tilde{U} \in \overline{B}_{r_1}(\overline{U})$ and $r_1 >0$. Now, note that the evaluation of $\tilde{u}$ at $h_1$ is given in coefficient space by $\gamma(\tilde{U})$, where $\gamma$ is defined as $\gamma(U)_{k,0} = U_{k,0} +  2 \sum_{j \in \mathbb{N}}U_{k,j}$ and $\gamma(U)_{k,n}=0$ for $n \ne 0$.
Then, define $F_2$ as 
\begin{align}\label{def : definition of F2}
    F_2(V) \bydef V   + \frac{h}{2}\mathcal{L}^{-1}\mathbf{\Lambda} Q(V)  + \frac{h}{2} \mathcal{L}^{-1}\mathbf{\Lambda} \Phi - \mathcal{L}^{-1}\gamma(\tilde{U})
\end{align}
and notice that zeros of $V$ equivalently provides zeros of \eqref{eq : equation for v} in function space. In particular, assume that we possess an approximate zero $\overline{V}$ of $F_2$ such that $\overline{V} = \pi^{N} \overline{V}$. Moreover, as in Section \ref{ssec : Newton Kantorovich}, let $A$ be constructed as in \eqref{def : operator A} and be an approximate inverse for $DF_2(\overline{V})$. 

Now, we wish to apply Theorem \ref{th : radii polynomial} to $F_2, \overline{V}$ and $A$. Notice that the bounds $Z_1$ and $Z_2$ can be computed as described in Section \ref{sec : computation of the bounds}. The bound $Y$ differs because of the term $\gamma(\tilde{U})$, which is not known explicitly. However, we can use that $\|\oU - \tilde{U}\|_{1,\omega} \leq r_1$ by definition of $\tilde{U}$. Now, we have 
\begin{align*}
    \|AF_2(\overline{V})\|_{1,\omega} \leq \|A(F_2(\overline{V}) - \mathcal{L}^{-1}\gamma(\oU))\|_{1,\omega} + \|A \mathcal{L}^{-1}\gamma(\tilde{U}-\oU)\|_{1,\omega}.
\end{align*}
since $\gamma$ is linear. Furthermore, $\oU$ only has a finite number of non-zero elements which are known explicitly. This implies that the quantity $\|A(F_2(\overline{V}) - \mathcal{L}^{-1}\gamma(\oU))\|_{1,\omega}$ can be computed as in Lemma \ref{lem : Y bound}.
Now, 

\begin{align*}
    \|A \mathcal{L}^{-1}\gamma(\tilde{U}-\overline{U})\|_{1,\omega} &= \sum_{k \in \mathbb{Z}^m}   w_{0,k} \| W_0 (A\mathcal{L}^{-1})_{col(k,0)}\|_1 |U_{k,n}-\overline{U}_{k,n} +  2\sum_{n \in \mathbb{N}}(U_{k,n}-\overline{U}_{k,n})| \\
    &\leq  \max_{k \in \mathbb{Z}^m}  \|W_0 A(\mathcal{L}^{-1}_k)_{col(0)}\|_1 r_1
\end{align*}
using that $\|\tilde{U}-\oU\|_{1,\omega} \leq r_1$. 
The above analysis allows to perform successive steps of integration, with a propagation of error determined in parts by $\max_{k \in \mathbb{Z}^m}  \|W_0 A(\mathcal{L}_k^{-1})_{col(0)}\|_1$.

There is a second component having an influence on the error propagation, that is $Z_1$. Indeed, from Theorem \ref{th : radii polynomial}, if the bound $Y$ is very small compared to both $Z_2$ and $Z_1$, then one can easily show that $r_0 \approx \frac{Y}{1-Z_1}$. Hence, for the zero finding problem \eqref{def : definition of F2}, we have shown that $Y$ will be of the form
\begin{align*}
    Y = \|A(F_2(\overline{V}) - \gamma(\oU))\|_{1,\omega} + r_1\max_{k \in \mathbb{Z}^m}  \|W_0 A(\mathcal{L}_k^{-1})_{col(0)}\|_1. 
\end{align*}
Then, the new radius of contraction is approximately given by $r_2 \approx \frac{Y}{1-Z_1}$, which leads to 
\begin{align*}
    r_2 \approx \frac{\|A(F_2(\overline{V}) + \mathcal{L}^{-1}\gamma(\tilde{U} - \oU))\|_{1,\omega}}{1-Z_1} + r_1\frac{\max_{k \in \mathbb{Z}^m}  \|W_0 A(\mathcal{L}_k^{-1})_{col(0)}\|_1}{1-Z_1},
\end{align*}
that is, from a step to the next, the error accumulates exponentially at the rate $\frac{\max_{k \in \mathbb{Z}^m}  \|W_0 A(\mathcal{L}_k^{-1})_{col(0)}\|_1}{1-Z_1}$.

\begin{remark}
    If $h_2-h_1$ is  small, then $A \approx I_d$. Since $\|W_0 (\mathcal{L}_k^{-1})_{col(0)}\|_1 = 1$ by Lemma \ref{lem : general bound Linv}, we obtain that $\|W_0 A (\mathcal{L}_k^{-1})_{col(0)}\|_1 r_1 \approx r_1$ if $h_2-h_1$ is small. Moreover, using Remark \ref{rem : remark Z1}, we know that $Z_1 = \mathcal{O}(h^2)$. Hence, the error accumulates almost linearly as the step size gets smaller. Note that one could use the framework of \cite{MR4777935} to perform multiple integrations with a more controlled wrapping effect.
\end{remark}

\bibliographystyle{abbrv}
\bibliography{papers}

\end{document}